\newcommand{\Z}{\mathbb Z}
\newcommand{\N}{\mathbb N}
\newcommand{\R}{\mathbb R}
\newcommand{\Q}{\mathbb Q}
\newcommand{\sma}{\left(\begin{array}}
\newcommand{\fma}{\end{array}\right)}
\newtheorem{lem}{Lemma}[section]
\newtheorem{defn}[lem]{Definition}
\newtheorem{ex}[lem]{Example}
\newtheorem{co}[lem]{Corollary}
\newtheorem{thm}[lem]{Theorem}
\newtheorem{prop}[lem]{Proposition}
\newenvironment{proof}{\textbf{Proof.}}{\newline\hspace*{\fill}{$\Box$}\\}
\begin{document}

\title{Quasi-actions whose quasi-orbits are quasi-isometric to trees}

\author{J.\,O.\,Button}
\date{}
\newcommand{\Address}{{% additional braces for segregating \footnotesize
  \bigskip
  \footnotesize

\textsc{Selwyn College, University of Cambridge,
Cambridge CB3 9DQ, UK}\par\nopagebreak
  \textit{E-mail address}: \texttt{j.o.button@dpmms.cam.ac.uk}
}}

\maketitle
\begin{abstract}
  We give necessary and sufficient conditions under which a quasi-action of any
  group on an arbitrary metric space can be reduced to a cobounded isometric
  action on some bounded valence tree, following a result of Mosher, Sageev
  and Whyte. Moreover if the quasi-action is metrically proper and
  quasi-orbits are quasi-isometric to trees then the group is virtually free.
\end{abstract}

\section{Introduction}
Let us say that a quasi-tree is any geodesic metric space which is
quasi-isometric to some simplicial tree. Suppose that a finitely
generated group $G$
acts on a quasi-tree then there is no reason to suppose that $G$
will act on a tree. Of course here we need to be more specific because
any group acts by isometries on any metric space via the trivial action.
Hence let us say for now that by ``acts'' in the above we mean acts by
isometries with unbounded orbits. Examples where $G$ acts on a
quasi-tree but not on a tree were first given in \cite{mnlms}. For more
recent examples, if we take $G$ to be
a non elementary hyperbolic group with property (T) then $G$ will have Serre's
property (FA), so that every action on a tree will have a global fixed
point. However by \cite{balas} we have that any non elementary hyperbolic
group, indeed any acylindrically hyperbolic group, has an unbounded
(acylindrical) action on a quasi-tree that is a graph $\Gamma$.
Indeed $\Gamma$ is a Cayley graph with respect to a generating set of $G$,
though this generating set will nearly always be infinite.

This suggests introducing some sort of finiteness condition. What if $G$
acts on a quasi-tree $X$ that is a graph and is of bounded valence? Will
$G$ then act on some simplicial tree $T$ and can we further take $T$ itself
to have bounded valence? A moment's consideration reveals no counterexamples,
even if we weaken the bounded valence condition on $X$ to local
finiteness or furthermore if we do not insist that $X$ is a graph but
just a quasi-tree which is a proper metric space. If however we require $X$
only to be a quasi-tree that is quasi-isometric to some proper metric space
then \cite{abos} Lemma 4.15 yields a counterexample: we take a non trivial
homogeneous quasi-morphism $h$ of a hyperbolic group $G$ having property
(T), whereupon we obtain from $h$ an infinite generating set $S$ such that
the Cayley graph of $G$ with respect to $S$ is quasi-isometric to $\R$.

A more useful version of this question is whether, given a group $G$ with
an action on a geodesic metric space $X$ with some finiteness condition
on $X$, this particular action can be changed into a similar action on
a (preferably bounded valence) tree $T$. This requires explaining what we
mean by ``changed into'' and ``similar'' but a standard notion in the
literature is that of a quasi-conjugacy: a quasi-isometry $F$ from
$X$ to $T$ (so that $X$ has to be a quasi-tree) which is coarse
$G$-equivariant, meaning that $F$ need not be exactly equivariant with
respect to the actions of $G$ on $X$ and on $T$, but which is equivariant
up to bounded error. Here we will say that these actions are equivalent.
Equivalence will preserve coarse properties of an action, for instance
being unbounded, being cobounded or being metrically proper.

Even more generally, we can consider a quasi-action of a group $G$ on a metric
space $X$. This can be thought of as an action up to bounded error and where
the self-maps $A_g:X\rightarrow X$ given by the elements $g\in G$ need not
be bijections but are quasi-isometries with uniform constants. We can then
generalise the definitions of these
coarse properties of actions as mentioned above to quasi-actions.
We can also do this for the concept of equivalence and these properties
will still be preserved under this more general notion. A disadvantage
is that quasi-actions will not behave as nicely as actions, for instance
we do not really have a good equivalent of the stabiliser of a point or
of a set. We can however generalise the notion of the orbit $Orb_G(x)$
of a point $x\in X$ to the case of a quasi-action: the definition
${\mathcal Q}_G(x)=\{A_g(x)\,|\,g\in G\}$ of the quasi-orbit
of $x$ is natural, and allows us to extend the notion of coboundedness
to quasi-actions: any quasi-orbit is coarse dense in $X$. 
However we will have to accept peculiarities such as $x$ need not
lie in its own quasi-orbit. The advantage of quasi-actions is that they
are very flexible: in particular if $G$ quasi-acts on $X$ and we have
a space $Y$ which is quasi-isometric to $X$ then there is an equivalent
quasi-action of $G$ on $Y$, which is certainly not true for actions.
Conversely if $G$ quasi-acts on any (geodesic) metric space $X$ then
it is true that there
is some (geodesic) metric space $Z$ and an equivalent isometric action  
of $G$ on $Z$. However, although $X$ and $Z$ will necessarily be
quasi-isometric, we cannot insist on other properties continuing to hold:
for instance if $X$ is proper and/or a simplicial tree then we cannot
assume that $Z$ will be either of these. Thus given the question:
``if $G$ acts on a quasi-tree then is there an equivalent action on a tree?''
the hypothesis is interchangeable with ``if $G$ quasi-acts on a tree''
(and indeed with ``if $G$ quasi-acts on a quasi-tree'') but on adding
any finiteness conditions to the given space being (quasi-)acted on, these 
questions are not automatically the same.

A big breakthrough was made in \cite{msw}. Theorem 1 there is as follows
(in this setting bushy is the same as having at least three ends, though
they are not the same for arbitrary bounded valence trees):
\begin{thm}
Given a cobounded quasi-action of a group $G$
on a bounded valence bushy tree $T$, there is a bounded valence, bushy
tree $T'$ and a cobounded isometric action of $G$ on $T'$ which is equivalent
to the given quasi-action of $G$ on $T$.
\end{thm}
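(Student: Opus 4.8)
The plan is to build a genuine $G$-tree out of the \emph{coarse} data of the quasi-action, exploiting the principle that although the self-maps $A_g$ are only quasi-isometries, the \emph{coarse equivalence classes} of subsets that they permute are permuted \emph{exactly}, so that $G$ acts honestly on any sufficiently canonical such family. First I would normalise the setup: fix uniform constants $(K,C)$ so that each $A_g$ is a $(K,C)$-quasi-isometry of $T$, so that $A_g\circ A_h$ is within bounded distance of $A_{gh}$ and $A_e$ within bounded distance of the identity, and so that, by coboundedness, some quasi-orbit is $D$-dense in $T$. Since $T$ is bounded valence and bushy, its Gromov boundary $\partial T$ is a Cantor set carrying a visual quasi-metric, and $T$ is recovered from $\partial T$ by the median map $\mu$ sending a triple of distinct ends to the unique vertex at which their three bi-infinite geodesics meet; bushiness is exactly what makes $\mu$ coarsely onto $VT$.

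My next step is to pass to the boundary and obtain an honest action there. Each $A_g$ extends to a homeomorphism $\partial A_g$ of $\partial T$, and because $A_g\circ A_h$ differs from $A_{gh}$ only by a bounded amount the two induce the \emph{same} boundary map; hence $g\mapsto\partial A_g$ is a genuine action of $G$ on $\partial T$, by homeomorphisms that are uniformly quasi-M\"obius with respect to the visual quasi-metric. The geometric input needed is the coarse compatibility that $A_g\bigl(\mu(\xi,\eta,\zeta)\bigr)$ lies within bounded distance of $\mu(\partial A_g\xi,\partial A_g\eta,\partial A_g\zeta)$, that is, the quasi-action coarsely intertwines with this exact boundary action through medians.

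The heart of the matter is then to rigidify this quasi-M\"obius boundary action into an isometric tree action. I would encode the tree by a $G$-invariant family of coarse separations of $\partial T$ and dualise: a clopen partition of $\partial T$ records a ``coarse wall'' of $T$, the collection of such walls carries nesting and complementation relations, and when no two walls cross---as is forced in a tree, where any two edge-walls have one of their four corner-regions empty---the Dunwoody/Sageev dual of this pocset is again a tree $T'$. Since $G$ acts on the coarse classes exactly, the pocset is genuinely $G$-invariant, so $G$ acts on $T'$ by isometries; tracking a quasi-orbit through $\mu$ then gives a quasi-isometry $T'\to T$ intertwining the two actions up to bounded error, which is the required equivalence, and this propagates coboundedness and bushiness to $T'$.

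The main obstacle, and where essentially all of the work lies, is the rigidification just invoked. A single quasi-isometry of $T$ does \emph{not} send an edge-half-space to an edge-half-space, even coarsely---its boundary shadow maps to a general clopen set, hence to a \emph{finite} union of shadows---so one cannot permute honest walls and must instead control a coarse median (pocset) structure in which the nesting and non-crossing dichotomies hold only up to bounded error. The delicate points are to collapse all bounded-diameter ambiguity so that these relations become an \emph{exact} nested pocset, i.e.\ a discrete median algebra, and then to show that the quasi-isometry constants together with the bounded valence of $T$ bound the valence of the dual tree $T'$ \emph{uniformly}, rather than yielding merely a locally finite or even infinite-valence tree; securing this uniform bound is exactly the step that uses the bounded-valence hypothesis on $T$ in an essential way.
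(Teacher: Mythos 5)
This theorem is not proved in the paper at all: it is Theorem 1 of Mosher--Sageev--Whyte, which the paper imports verbatim (as Theorem \ref{mswt}) and uses as a black box, noting only that the published proof goes via Dunwoody tracks in a $2$-complex. So the relevant comparison is with the MSW argument, and against that standard your proposal is a strategy outline with its central step missing rather than a proof. The parts you do carry out are fine: quasi-isometries of $T$ at bounded distance from one another induce the same boundary homeomorphism, so $g\mapsto\partial A_g$ is indeed an honest action of $G$ on the Cantor set $\partial T$ by uniformly quasi-M\"obius maps, and the median/shadow dictionary between $T$ and triples in $\partial T$ is correct, with bushiness giving coarse surjectivity of $\mu$.

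The gap is everything after that. The $G$-invariant wall family you propose to dualise is never constructed. The honest edge-walls of $T$ are not permuted by $\partial A_g$: as you yourself observe, the image of an edge-shadow is only a finite (and a priori unboundedly large) union of shadows, so the natural invariant family consists of general clopen partitions, and two such partitions can perfectly well cross, with all four corner regions nonempty. Your remark that non-crossing is ``forced in a tree'' applies only to genuine edge-walls, not to their images under the boundary action, so the nestedness hypothesis of the pocset duality is precisely what must be proved; no mechanism is given for extracting an exact, $G$-invariant, pairwise nested family that still separates enough pairs of ends. Moreover ``$G$ acts on coarse classes exactly'' does not yield an exact pocset: nesting between coarse classes is defined only up to bounded error, and collapsing that ambiguity to a discrete median structure is again the hard step. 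Even granting such a family, you would still need to show that the number of walls separating two points of $T$ grows linearly in their distance (so that the dual tree $T'$ is quasi-isometric to $T$, not a quotient of it) and that $T'$ has uniformly bounded valence; both are stated as goals, not established. Since producing exactly such an equivariant separating pattern is the entire content of the MSW theorem (it is what their track argument delivers), the proposal reformulates the problem in the language of boundary pocsets but does not solve it.
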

This is a very strong conclusion and led to various important consequences
in that paper. However the hypotheses are really rather restrictive and
certainly a long way from being necessary. Why the restriction to bushy
rather than arbitrary bounded valence trees? Why not have weaker
finiteness conditions on the given tree $T$ such as being locally finite
(a quasi-action will not preserve valency in general even if it maps
vertices to vertices)? If we wish to consider an isometric action on a
bounded valence quasi-tree, we have said above that this action
is equivalent
to a quasi-action on some tree but we would need to know that this tree
can also be taken to have bounded valence in order to apply this theorem. Or
we might have a quasi-action on a bounded valence or locally finite
quasi-tree $X$. Indeed why does $X$ have to be a graph at all? We might
want to take it just to be a quasi-tree that is a proper metric space,
or even any metric space that is quasi-isometric to a tree and which also
quasi-isometrically embeds in some proper metric space. These last two
conditions are clearly necessary for the conclusion to hold, because the
existence of an equivalence will provide a quasi-isometry between the
given space $X$ and the resulting bounded valence tree $T'$.

In this paper we show that these conditions are indeed sufficient, apart
from the case of the space $X$ being quasi-isometric to $\R$. In particular
we have (as a special case of Theorem \ref{main}):
\begin{thm} \label{intr}
Given any cobounded quasi-action of a group $G$ on a metric space $X$,
where $X$ is quasi-isometric to some simplicial tree and also
quasi-isometrically embeds into a proper metric space,
exactly one of the following three cases occurs:\\
The quasi-action is equivalent to\\
$\bullet$ 
some cobounded isometric action on a bounded valence, bushy
tree\\
$\bullet$
or some cobounded quasi-action on the real line\\
$\bullet$
or to the trivial isometric action on a point.
\end{thm}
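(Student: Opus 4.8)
The plan is to reduce the given quasi-action to the setting of the Mosher–Sageev–Whyte theorem (Theorem~1 in the excerpt) by producing a bounded valence tree on which $G$ quasi-acts coboundedly, and then to handle separately the degenerate cases where no such bushy tree exists. The first step is to exploit the two hypotheses on $X$: since $X$ quasi-isometrically embeds into a proper metric space, it is itself coarsely a proper space, and since $X$ is quasi-isometric to a simplicial tree, there is a quasi-isometry $q:X\to T_0$ for some tree $T_0$. The coarse properness of $X$ should force $T_0$ to be, up to equivalence, locally finite; the more delicate point is to arrange bounded valence. First I would use the flexibility of quasi-actions noted in the introduction: transport the quasi-action of $G$ on $X$ across $q$ to an equivalent cobounded quasi-action on $T_0$. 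Coboundedness means some quasi-orbit is coarsely dense in $T_0$.

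The key technical step is to pass from a locally finite (or merely proper) tree to a \emph{bounded valence} tree while preserving the quasi-action up to equivalence. I would argue that coboundedness of the quasi-action is exactly what rigidifies the geometry: a coarsely dense quasi-orbit, together with the uniform quasi-isometry constants of the maps $A_g$, bounds the local combinatorial complexity of $T_0$ at the coarse scale. Concretely, I expect to build a new tree $T'$ by a standard coarsening/collapsing construction — for instance by choosing a maximal separated net in $T_0$ adapted to the quasi-orbit, or by passing to a quasi-isometric ``tree of coarse components'' — and then to verify that $T'$ has bounded valence and that the quasi-action descends to an equivalent cobounded quasi-action on $T'$. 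The uniformity of constants across $G$ is what makes this bound uniform rather than pointwise.

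Once $G$ quasi-acts coboundedly on a bounded valence tree $T'$, I would invoke the trichotomy for bounded valence trees according to the number of ends. A bounded valence tree with a cobounded quasi-action is quasi-isometric to one of: a point (bounded $T'$), the real line ($T'$ two-ended, i.e.\ quasi-isometric to $\R$), or a bushy tree (at least three ends). In the bounded case the quasi-action is equivalent to the trivial action on a point. In the two-ended case it is equivalent to a cobounded quasi-action on $\R$, which is the exceptional case the theorem allows and cannot in general be promoted to an isometric action on a tree. In the bushy case, Theorem~1 of Mosher–Sageev–Whyte applies directly to upgrade the cobounded quasi-action on the bounded valence bushy tree $T'$ to an equivalent cobounded \emph{isometric} action on a bounded valence bushy tree. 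Mutual exclusivity of the three cases follows from quasi-isometry invariants (number of ends, or boundedness) which are preserved under equivalence.

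The main obstacle I anticipate is the bounded-valence reduction in the second paragraph: moving from the hypothesis ``quasi-isometric to a tree and coarsely proper'' to an honest bounded valence tree supporting the quasi-action. Local finiteness alone is cheap, but ruling out unbounded valence requires genuinely using coboundedness and the uniform quasi-action constants together — a purely geometric quasi-isometry to a tree says nothing about valence bounds, so the group action must do the work. I would expect this to be where most of the careful coarse-geometry estimates live, and where the hypothesis that $X$ embeds in a proper space (rather than merely being quasi-isometric to a tree) is genuinely needed.
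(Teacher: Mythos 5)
Your overall route is the paper's: transport the quasi-action to a tree model, use properness to get a locally finite model, use coboundedness to upgrade to bounded valence, then split by the number of ends and feed the bushy case to Mosher--Sageev--Whyte. You have also correctly located where the work is (the bounded valence step), and your proposed mechanism is essentially what the paper does: take a separated net $S$ in a proper model of a quasi-orbit, form the Rips graph $\Gamma_s(S)$ (locally finite by coarse properness), and then use the cobounded quasi-action to promote ``coarsely proper'' to ``uniformly coarsely proper'', i.e.\ bounded valence (Proposition \ref{blls}: transport the covering bound at a basepoint to all points via the uniform quasi-isometries $A_g$). One genuine omission in that step: this construction produces a bounded valence \emph{graph} quasi-isometric to a tree, not a tree, and converting such a graph into a bounded valence \emph{tree} is a separate, non-trivial lemma (Lemma \ref{lfbd}/Corollary \ref{lftr}, via the convex closure of the image of the vertex set and a count of components of $T\setminus\{s\}$). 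Your ``coarsening/collapsing'' sentence does not supply this.

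The second gap is in your trichotomy. A bounded valence tree can have exactly one end, and your case list (point, two ends, at least three ends) silently skips it; it must be ruled out, and the paper does so by showing that an unbounded cobounded quasi-action on a quasi-tree necessarily contains a loxodromic element (there are no parabolics for quasi-actions on quasi-trees --- Corollary \ref{serc}, which is where the tree hypothesis is used beyond MSW), whose quasi-axis gives a quasi-isometrically embedded copy of $\R$ and hence at least two ends. Relatedly, ``$T'$ two-ended, i.e.\ quasi-isometric to $\R$'' is false for bounded valence trees in general (attach to the vertex $n$ of a line a path of length $|n|$): you again need the cobounded quasi-action together with the Morse lemma to see that every point of $T'$ lies within bounded distance of the quasi-axis. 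With these two points repaired your argument is the paper's proof.
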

It follows that the only way we can fail to find an equivalent
action on a bounded valence tree is with the quasi-morphism example given
above. Another problem when the tree is $\R$ is that,
in the case of say $\Z\times\Z$, $\Q$ or $\R$, we might act on $\R$ by
isometries but not simplicial automorphisms which could be an issue if we
were hoping to find splittings of our group $G$ using Bass - Serre theory
(which we do obtain in the first case where the tree is bushy). 
Nevertheless we show in Corollary \ref{qttt} that if $G$ is finitely
generated and $X$ is a proper quasi-tree then
any unbounded action on $X$ implies the existence of a non trivial
splitting of $G$. To obtain these
statements, we do not vary the proof of \cite{msw} Theorem 1 but show
that the given quasi-action can be changed into an equivalent quasi-action
to which this theorem applies.

However our main aim in this paper is to deal with actions and quasi-actions
which are not cobounded. A fair bit of work has emerged recently which
examines possible actions (and sometimes quasi-actions) of a given group
on hyperbolic spaces up to equivalence. However the vast majority of these
results apply only to cobounded actions, even though non cobounded actions
are certainly very common: for instance any cobounded (quasi-)action of a
group on a space gives rise to a (quasi-)action of any of its subgroups
and these are unlikely to be cobounded themselves. But if for instance we
found ourselves with a finitely generated group $G$ having an action on a
tree $T$ which was not cobounded, we would merely restrict to the unique
minimal $G$-invariant subtree $T_G$ which certainly gives us a cobounded
action (indeed the quotient $G\backslash T_G$ is a finite graph by say
\cite{dd} Proposition I.4.13).

At first sight there is a problem with taking non cobounded actions if we
want to apply \cite{msw} Theorem 1 or even conclude that our group
  (quasi-)acts coboundedly on some well behaved space. This is that
  equivalence preserves coboundedness, so if we start with a quasi-action
  that is not cobounded, no equivalent quasi-action will be cobounded
  either. To get round this, we take an idea already in the literature
  of changing the notion of equivalence so that
  the coarse $G$-equivariant map from one space $X_1$ to another
  space $X_2$ is permitted to be a quasi-isometric embedding rather than a
  full quasi-isometry. We refer to this as a reduction from the quasi-action
  on $X_2$ to the quasi-action on $X_1$, as the idea is that $X_2$ could
  be a much bigger space than is needed to study the given quasi-action
  (for instance a reduction occurs whenever an isometric action of a group
  $G$ on a space $X$ is restricted to a $G$-invariant subspace $X_0$
  provided that $X_0$ inherits the subspace metric from $X$).

  The idea of reduction suits our purposes very well because we can
  certainly have the quasi-action on $X_1$ being cobounded but not
  the one on $X_2$ (though coboundedness will be preserved the other way
  round). Unlike equivalence, reduction is no longer an equivalence
  relation but we show (after providing some necessary metric space
  preliminaries in Section 2 and defining quasi-actions and related notions
  at the start of Section 3) in Subsection 3.3 that for any quasi-action
  there is a unique minimal reduction up to equivalence and that these
  minimal reductions are precisely the reductions to cobounded quasi-actions.
  For a cobounded quasi-action on a space $X$, the space and any quasi-orbit
  are quasi-isometric, so that an advantage of using quasi-actions is that
  they can be transferred between the two, without worrying whether an
  isometric action exists. Hence from then on our quasi-actions will be
  restricted (or perhaps ``quasi-restricted'') to a quasi-orbit. Now
  reduction does not change the quasi-isometry type of quasi-orbits,
  so if we have a quasi-action $\alpha$
  where the quasi-orbits are not quasi-isometric
  to any geodesic metric space then there will be no reduction of $\alpha$
  to any cobounded quasi-action on any geodesic metric space and so we leave
  such quasi-actions alone. Thus an important property of metric spaces
  in this paper is that of being a quasi-geodesic space $X$: this has various
  equivalent formulations but here we will simply define it as $X$ is
  quasi-isometric to some geodesic metric space. Because we are working
  with quasi-actions, we can and will move freely between geodesic and
  quasi-geodesic spaces.

  Theorem 1 of \cite{msw} does not require the group $G$ to be finitely
  generated, which is at first surprising given the conclusion. However
  the input is a cobounded quasi-action of $G$ on a suitable space and
  often infinitely generated groups will not (quasi-)act coboundedly on
  geodesic metric spaces. (For instance a metrically proper quasi-action of
  an infinitely generated group on a quasi-geodesic space is never
  cobounded.) In fact there is a similar property to being finitely
  generated but which is more general because it takes account of the
  quasi-action as well as the group. This is that quasi-orbits are coarse
  connected and it holds for all quasi-actions of finitely generated groups.
  Any quasi-geodesic space is coarse connected, thus if we have a quasi-action
  where the quasi-orbits are not coarse connected then it falls outside our
  remit and needs to be excluded from our hypotheses. Indeed strange things
  can happen, such as Subsection 10.1 Example 4 which is a (necessarily
  infinitely generated) group with an unbounded action on a locally finite
  tree but with no unbounded action on a bounded valence tree.
  
  In section 4 we use some coarse geometry to show that we can turn any
  quasi-geodesic metric space which is quasi-isometric to some proper
  metric space into a connected locally finite graph. More importantly
  if a group $G$ quasi-acts on some space $X$ and the quasi-orbits are
  quasi-geodesic metric spaces which quasi-isometrically embed in some proper
  metric space then we can reduce this quasi-action to a cobounded one
  on a locally finite graph. Moreover this graph can be taken to have
  bounded valence (because of the coboundedness).

  In Section 5 we give some background and examples of groups acting by
  isometries on hyperbolic spaces. Here a hyperbolic space will be a
  $\delta$-hyperbolic geodesic metric space
  for some appropriate $\delta$ (the
  exact value of $\delta$ will not be needed, so any equivalent definition
  can be used here). In Section 6 we restrict our hyperbolic spaces to trees
  and quasi-trees. The relevant point here is that any subset of a
  (quasi-)tree $T$ that is coarse connected is (with the subspace metric
  from $T$) a quasi-geodesic space. Thus suppose that we have a quasi-action
  of a group on any space where the quasi-orbits are
  are proper quasi-trees, or even if they quasi-isometrically embed in
  some tree and also quasi-isometrically embed in some proper metric
  space, then either the quasi-orbits are not coarse connected, whereupon
  there is no well behaved reduction of this quasi-action, or we have
  reduced it to a cobounded one on a bounded valence graph $\Gamma$.
As $\Gamma$ is quasi-isometric to a quasi-orbit, it is a quasi-tree.
We then show in Corollary \ref{lftr} (by removing a point and considering
unbounded components of the complement) that a bounded valence (respectively
locally finite) graph that is a quasi-tree is in fact quasi-isometric to a
bounded valence (respectively locally finite) tree (though many locally finite
trees are not quasi-isometric to any bounded valence tree).

Consequently in Sections 7 and 8 we can put this together with \cite{msw}
Theorem 1 to obtain the following generalisation of Theorem \ref{intr}
to actions that need not be cobounded.
\begin{thm} \label{intr2} 
Take any quasi-action with coarse connected
quasi-orbits of any group $G$ on an arbitrary metric space $X$.
Suppose that these quasi-orbits both quasi-isometrically embed
into a tree and quasi-isometrically embed into a proper metric space.
  Then exactly one of the following three cases occurs:\\
The quasi-action reduces to\\
  $\bullet$
some cobounded isometric action on a bounded valence, bushy tree\\
$\bullet$
or some cobounded quasi-action on the real line\\
$\bullet$
or to the trivial isometric action on a point.
\end{thm}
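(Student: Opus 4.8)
The plan is to combine the machinery assembled in the preceding sections, reducing Theorem \ref{intr2} to the cobounded situation where Theorem \ref{intr} (and ultimately \cite{msw} Theorem 1) applies. First I would invoke the reduction theory developed in Subsection 3.3: any quasi-action has a minimal reduction, unique up to equivalence, and these minimal reductions are exactly the reductions to cobounded quasi-actions. Since we are free to work on a quasi-orbit, I would pass to a quasi-orbit $Q$ equipped with the subspace metric. The two hypotheses—that $Q$ quasi-isometrically embeds into a tree and into a proper metric space—are precisely the conditions needed, so I would apply the Section 4 result that a quasi-action whose quasi-orbits are quasi-geodesic and quasi-isometrically embed into a proper metric space reduces to a cobounded quasi-action on a locally finite graph $\Gamma$, which by coboundedness can be taken of bounded valence. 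The role of coarse connectedness is exactly to guarantee that $Q$ is quasi-geodesic, since Section 6 tells us a coarse connected subset of a tree is a quasi-geodesic space; this is what licenses the reduction in the first place.

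Next I would identify the resulting space as a quasi-tree. Because $\Gamma$ is quasi-isometric to $Q$, and $Q$ quasi-isometrically embeds into a tree while being cobounded in $\Gamma$, the graph $\Gamma$ is quasi-isometric to a tree, i.e.\ a bounded valence quasi-tree. At this point I would apply Corollary \ref{lftr}: a bounded valence graph that is a quasi-tree is quasi-isometric to a bounded valence tree $T$. Transporting the cobounded quasi-action along this quasi-isometry—using the flexibility of quasi-actions noted in the introduction, that a quasi-action on $\Gamma$ yields an equivalent one on any quasi-isometric space—gives a cobounded quasi-action of $G$ on the bounded valence tree $T$.

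It then remains to trichotomize according to the coarse geometry of $T$. A bounded valence tree is (coarsely) one of: bushy, i.e.\ having at least three ends; quasi-isometric to the real line, i.e.\ two-ended; or bounded, i.e.\ quasi-isometric to a point. In the bushy case I would feed the cobounded quasi-action on $T$ directly into \cite{msw} Theorem 1 to obtain a cobounded isometric action on a bounded valence bushy tree $T'$ equivalent to it, which combined with the reduction yields the first bullet. In the two-ended case, $T$ is quasi-isometric to $\R$ and the cobounded quasi-action transfers to a cobounded quasi-action on $\R$, giving the second bullet. In the bounded case the quasi-action reduces to the trivial action on a point, the third bullet. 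Exclusivity of the three cases follows because the quasi-isometry type of the quasi-orbits is a reduction invariant, and a point, $\R$, and a bushy tree are pairwise non-quasi-isometric (distinguished by number of ends).

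The main obstacle is the bushy case, where everything must be arranged so that \cite{msw} Theorem 1 genuinely applies: one must verify that the reduced quasi-action on $T$ really is cobounded on a tree to which their theorem speaks, and that the quasi-isometries used to pass from $\Gamma$ to $T$ and to transport the quasi-action do not destroy boundedness of valence or coboundedness. In particular the passage through Corollary \ref{lftr} only yields a quasi-isometry to $T$, not an isometry, so the hypotheses of \cite{msw} Theorem 1 are met only after transporting the quasi-action across this quasi-isometry, and one must check this preserves both the cobounded property and keeps $T$ of bounded valence. Once these compatibility checks are in place the three bullets and their mutual exclusivity follow formally.
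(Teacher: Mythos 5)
Your overall route is the same as the paper's: pass to a quasi-orbit, use coarse connectedness plus the embedding into a tree (Theorem \ref{sstt}) to see the quasi-orbits are quasi-geodesic, apply Corollary \ref{pracb} to reduce to a cobounded quasi-action on a bounded valence graph, convert that graph to a bounded valence tree via Corollary \ref{lftr}, and then split into cases and feed the bushy case to \cite{msw} Theorem 1. Up to that point the argument is sound and matches Theorem \ref{gnthm} in the paper.

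The gap is in your trichotomy. You assert that a bounded valence tree is coarsely one of: bushy (at least three ends), two-ended (quasi-isometric to $\R$), or bounded. None of the three identifications is true for bounded valence trees in general, and the list is not even exhaustive: you have omitted the one-ended case (e.g.\ a ray), which must be actively ruled out. The paper does this by observing that a cobounded quasi-action on an unbounded tree with coarse connected quasi-orbits must contain a loxodromic element (Corollary \ref{serc}, which applies without finite generation precisely because the quasi-orbits are coarse connected); a loxodromic gives a quasi-isometrically embedded copy of $\Z$, hence of $\R$, which a one-ended tree cannot contain. Second, ``two-ended'' does not imply ``quasi-isometric to $\R$'' for a bounded valence tree (attach a path of length $n$ at the vertex $n$ of the line); the paper needs the Morse lemma together with coboundedness of the quasi-action to conclude that every point of $T$ lies within bounded distance of the embedded quasi-line. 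Third, ``at least three ends'' does not imply ``bushy'' for an arbitrary bounded valence tree (a tripod of three rays has three ends but is not bushy); this implication is exactly Proposition \ref{mswp} and again uses the cobounded quasi-action. So each branch of your case analysis silently invokes the coboundedness you worked to obtain, and one branch is missing entirely; with Corollary \ref{serc}, Proposition \ref{mswp} and the Morse lemma argument inserted, the proof closes up as in the paper. Your exclusivity argument via the quasi-isometry type of quasi-orbits being a reduction invariant (Proposition \ref{orgi}) is correct.
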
  
Examples are given to demonstrate that if any of the three conditions on
the quasi-orbits fail to hold, our conclusion fails even if the other
two conditions are true. These three conditions are also clearly
necessary. 

This theorem shows us that any quasi-orbit must be quasi-isometric to a
bounded valence bushy tree, to a point, or to $\R$. In the first two cases
we now have the strongest conclusion we could have hoped for under these
very general hypotheses: a reduction to a cobounded isometric action
on a bounded valence tree. An isometric action (we do not assume
coboundedness) of an arbitrary group (not necessarily finitely
generated or even countable) acting by isometries
on an arbitrary hyperbolic space (assumed geodesic but not
proper) has two limit points if and only if the orbits of the action
are quasi-isometric to $\R$. Therefore this encompasses our remaining case
in Theorem \ref{intr2}, at least on turning the quasi-action into an
isometric action.
In Section 9 we completely determine all actions of
a given group $G$ on hyperbolic spaces with two limit points (we say the
action is orientation preserving or reversing if it respectively
fixes or swaps these two limit points). Such an action gives rise to
a Busemann quasi-morphism (homogeneous but not necessarily a homomorphism)
of $G$ at one of the limit points. (In the orientation reversing case,
we take the Busemann quasi-morphism of the index 2 orientation preserving
subgroup which is anti-symmetric with respect to the whole group.)

This consequence of a group action that fixes a point on the boundary of a
hyperbolic space has been examined by various authors. However we show in
Theorem \ref{twchr} for orientation preserving and Corollary \ref {rvrs}
for orientation reversing actions that the Busemann quasi-morphism $B$ is
not just obtained from the action, it {\it is} the action (meaning that
the natural quasi-action on $\R$ given by $B$ is the canonical reduction
of the action). This is certainly not true without the two limit point
condition. For instance the Baumslag - Solitar group
$\langle t,a\,|\,tat^{-1}=a^2\rangle$ acts both on the line and on the
Bass - Serre tree given by the splitting as an HNN extension. These are
certainly very different actions (they are not even of the same hyperbolic
type) but in both cases the Busemann quasi-morphism is the homomorphism
sending $t$ to 1 and $a$ to 0.

This all works well for actions themselves but for quasi-actions (even on
$\R$) there is no direct way to define a Busemann quasi-morphism: we have to
convert them into an equivalent isometric action and then proceed. However
the division of group elements into elliptic and loxodromic
(which makes sense for quasi-actions and results here
in no parabolic elements,
just as for actions on trees) is preserved by this process. This results
in Theorem \ref{dpell} explaining exactly when our remaining case of
a quasi-action with
quasi-orbits that are quasi-isometric to $\R$ can be reduced to a
cobounded isometric action on a bounded valence tree
(which can be assumed to be $\R$).
We summarise this as:
\begin{thm}
  Take any quasi-action of a group $G$ on a metric space where the quasi-orbits
  are quasi-isometric to $\R$. This quasi-action can
  be reduced to an isometric action on a proper hyperbolic space or on a tree
  (which we can take to be $\R$ and the action to be cobounded) if and only if
  the set of elliptic elements of the quasi-action
  is equal to the kernel
of a homomorphism from $G$ to $\R$ (or from an index 2 subgroup of $G$
in the orientation reversing case).
\end{thm}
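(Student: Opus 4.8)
The plan is to funnel both directions through the single condition that the associated Busemann quasi-morphism is an honest homomorphism, and then to prove the purely algebraic fact that a homogeneous quasi-morphism $G\to\R$ is a homomorphism exactly when its zero set is the kernel of a homomorphism.

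First I would set up the Busemann picture. Converting the quasi-action to an equivalent isometric action on a geodesic space $Z$, the hypothesis that quasi-orbits are quasi-isometric to $\R$ makes $Z$ a hyperbolic space with exactly two limit points, so the analysis of Section 9 applies. This produces the Busemann quasi-morphism $B$ (defined on $G$ in the orientation preserving case, and anti-symmetrically on the index $2$ orientation preserving subgroup $H$ in the reversing case), and by Theorem \ref{twchr} (respectively Corollary \ref{rvrs}) $B$ \emph{is} the canonical reduction: the quasi-action is equivalent to the translation quasi-action $x\mapsto x+B(g)$ on $\R$. Since reduction preserves the elliptic/loxodromic dichotomy, the elliptic set is $E=\{g:B(g)=0\}$ in the orientation preserving case, while in the reversing case every element of $G\setminus H$ acts as a reflection and so is elliptic, giving $E\cap H=\{g\in H:B(g)=0\}$.

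Next I would reduce the geometric statement to ``$B$ is a homomorphism''. If $B$ is a homomorphism then $g\cdot x=x+B(g)$ is a genuine isometric action of $G$ on $\R$ (with $G\setminus H$ acting by the appropriate reflections in the reversing case); it is cobounded because the image $B(G)$ is a nontrivial subgroup of $\R$ (nontrivial as the quasi-orbits are unbounded), and it is equivalent to the canonical reduction by the previous paragraph, which is exactly the sought reduction to $\R$, a tree and a proper hyperbolic space. Conversely, suppose the quasi-action reduces to an isometric action on a proper hyperbolic space or tree. The two limit points persist, so there is an honest Busemann cocycle $B'$; the usual computation $B'(gh)=B'(g)+B'(h)$ (using that $g$ fixes the chosen limit point and that Busemann functions are genuine, not merely coarse, in a proper space) shows $B'$ is a homomorphism. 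By uniqueness of the minimal reduction up to equivalence the $B'$- and $B$-translation quasi-actions on $\R$ are equivalent; rigidity of quasi-isometries of $\R$ (coarsely affine) forces $B'=cB$ up to bounded error for some $c\neq 0$, and since both sides are homogeneous this holds on the nose, so $B$ is a homomorphism.

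It then remains to prove the algebraic equivalence, which is the crux. One direction is immediate: if $B$ is a homomorphism then $E=\ker B$. For the converse suppose $E=\ker h$ for a homomorphism $h:G\to\R$, and set $N=\ker h$; as the kernel of a map to an abelian group, $N$ is normal with $G/N$ abelian. Now $B$ vanishes on $N$, and writing $(gn)^k=g^k m_k$ with each $m_k$ a product of conjugates of $n$ (hence in $N$), the defect inequality together with homogeneity gives
\begin{equation*}
B(gn)=\lim_{k\to\infty}\frac{B((gn)^k)}{k}=\lim_{k\to\infty}\frac{B(g^k)}{k}=B(g),
\end{equation*}
so $B$ descends to a homogeneous quasi-morphism $\bar B$ on $G/N$. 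Since $G/N$ is abelian and every homogeneous quasi-morphism on an abelian group is a homomorphism (expand $(ab)^k=a^k b^k$, apply the defect bound and divide by $k$), $\bar B$ is a homomorphism and hence so is $B=\bar B\circ\pi$. In the orientation reversing case the same argument is run on $H$ using $B|_H$, the reflections in $G\setminus H$ being automatically elliptic, which is why the kernel in the statement is taken over the index $2$ subgroup. The main obstacle I anticipate is not any single computation but the bookkeeping that ties together the three descriptions of the target: I must ensure that the honest Busemann homomorphism coming from a proper (possibly tree) target is identified, via uniqueness of the canonical reduction, with the a priori merely coarse Busemann quasi-morphism $B$ produced from the non-proper conversion space $Z$. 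Once that identification is secured, the homogeneity-plus-abelianisation lemma does the real work and both implications fall out.
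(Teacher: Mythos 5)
Your overall architecture matches the paper's (this statement is proved there as Theorem \ref{dpell}): both directions are funnelled through the claim that the Busemann quasi-morphism $B$ is a genuine homomorphism, with the ``only if'' direction resting on the fact that for isometric actions on proper hyperbolic (or CAT(0)) spaces fixing a boundary point the Busemann quasi-morphism is an honest homomorphism, and with Theorem \ref{twchr}/Corollary \ref{rvrs} supplying the reduction once $B$ is known to be a homomorphism. Where you genuinely diverge is at the crux algebraic step. The paper observes that every commutator lies in $\ker\theta$, hence in the zero set of $B$, and then invokes Bavard's lemma (the defect of a homogeneous quasi-morphism equals $\sup_{g,h}|B(ghg^{-1}h^{-1})|$) to conclude the defect is zero. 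You instead show $B$ is constant on cosets of $N=\ker\theta$ via the conjugate-collecting identity $(gn)^k=g^km_k$, descend to a homogeneous quasi-morphism on the abelian group $G/N$, and finish with the elementary fact that homogeneous quasi-morphisms on abelian groups are homomorphisms. Both arguments are correct and exploit the same underlying observation (commutators sit in the zero set); yours is more self-contained and avoids citing Bavard duality, at the cost of a slightly longer computation. Your extra step identifying $B'$ with $B$ via uniqueness of the canonical reduction is sound but not actually needed for the ``only if'' direction, since the elliptic set is preserved under reduction and is already the kernel of the honest homomorphism $B'$.

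One small point you gloss over: in the orientation reversing case you write ``$B|_H$'' as if the index $2$ subgroup $H$ furnished by the hypothesis were automatically the orientation preserving subgroup $G^+$ on which $B$ is defined. This needs an argument, which the paper supplies: if $h\in H\setminus G^+$ then $h$ is elliptic (all of $G\setminus G^+$ is), so $\theta(h)=0$; picking $h_1\in H$ with $\theta(h_1)=\theta(hh_1)\neq 0$ forces both $h_1$ and $hh_1$ into $G^+$, whence $h\in G^+$, a contradiction. This is a two-line fix, but without it the descent argument is being run on the wrong subgroup.
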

Thus there are some quasi-actions with quasi-orbits
quasi-isometric to $\R$ which cannot be reduced to a cobounded isometric
action on any nice space, but these are the only exceptions. We finish
Section 9 by showing that if we want to insist on simplicial
rather than isometric actions on the real line in the above theorem
then we just change the target of the homomorphism from
$\R$ to $\Z$.

We end in Section 10 with applications. For a finitely generated group $G$,
an unbounded simplicial action on a tree gives rise to a splitting of the
group by Bass - Serre theory. Thus our results above tell us that an
unbounded quasi-action of $G$ on a metric space where the quasi-orbits
quasi-isometrically embed in a tree and quasi-isometrically embed in
some proper metric space gives us a splitting of $G$, except for the
case when they are quasi-isometric to $\R$. By changing the action
slightly in this case, we establish Corollary
\ref{qttt} as mentioned earlier in this introduction which says that any
unbounded isometric action of $G$ on a proper quasi-tree implies a
splitting for $G$.
Finally we give a result that requires no conditions at all
on the space and only one obviously necessary condition on the quasi-orbits. 
Theorem \ref{vfmn} states that any group with a metrically proper action
on a space where the quasi-orbits are quasi-isometric to trees is
finitely generated and virtually free. The interesting point here is that
we obtain as a corollary that a finitely generated group $G$
quasi-isometric to a tree is virtually free, rather than requiring this
fact in the proof. This major result is normally established by combining
the Stallings Theorem on splittings over finite subgroups with Dunwoody's
Accessibility Theorem. However the power of \cite{msw} Theorem 1 along
with our techniques mean that the metrically proper action of
$G$ on itself by left multiplication
gives rise to an equivalent quasi-action on the tree. This can now
be turned into a genuine action on a tree with finite stabilisers and
finite quotient graph.

\section{Metric space preliminaries}

\begin{defn} Given $c>0$, a metric space $(X,d_X)$ is said to be   
  $c$-{\bf coarse connected} if for any $x,y\in X$ we can find a
  $c$-coarse path from $x$ to $y$. This is a finite
sequence of points $x=x_0,x_1,\ldots ,x_n=y$ with $d_X(x_{i-1},x_i)\leq c$.
It is {\bf coarse connected} if it is $c$-coarse connected for some $c>0$
(in which case it is also $d$-coarse connected for every $d\geq c$).
\end{defn}
Note that any connected metric space is $c$-coarse connected for every 
$c>0$. Also for any metric space $X$ and any $c>0$, the relation: there
  exists a $c$-coarse path from $x$ to $y$ is an equivalence relation on $X$.

We introduce another similar definition.
\begin{defn} Given $C>0$, a non-empty subset $S$ of $X$ is said to be 
$C$-{\bf coarse dense} in $X$ if every point of $X$ is at distance at most 
$C$ in $X$ from some point of $S$. It is {\bf coarse dense} if it is
$C$-coarse dense for some $C>0$.
\end{defn}

Note that if $X$ is a metric space that is $c$-coarse connected then
a $C$-coarse dense subset
of $X$ is $(2C+c)$-coarse connected (with the subspace metric).

  As is well known, a $(K,\epsilon)$ {\bf quasi-isometric embedding}
  $q$ from a metric space
  $(X,d_X)$ to another $(Y,d_Y)$ has $K\geq 1,\epsilon\geq 0$ such that
  \[(1/K)d_X(x,y)-\epsilon\leq d_Y(q(x),q(y))\leq Kd_X(x,y))+\epsilon.\]
  A $(K,\epsilon,C)$ {\bf quasi-isometry} for $C\geq 0$
  is a $(K,\epsilon)$
  quasi-isometric embedding $q$ which is also $C$-coarse onto, 
  that is the image of $q$ is $C$-coarse
dense in $Y$.
  If so then there will be a quasi-inverse $r:Y\rightarrow X$ which is
  also a quasi-isometry and such that both $rq$ and $qr$ are within
  bounded distance of $Id_X$ and $Id_Y$ respectively.

Note that a quasi-isometry $q$ preserves the coarse connected 
and coarse dense properties of subsets, though the constants
can change.
Indeed if $X$ is coarse connected and $q:X\rightarrow Y$ is a quasi-isometric
embedding then $q(X)$ is also coarse connected. If moreover $q$ is
a quasi-isometry then $Y$ is coarse connected and if we have a subset $S$
which is coarse dense in $X$ then $q(S)$ is coarse dense in $Y$.

  We say that subsets $S,T$ of a metric space $X$ are {\bf Hausdorff
    close} if the Hausdorff distance between them is finite. This implies
  that $S$ and $T$ are quasi-isometric with the subspace metrics inherited
  from $X$.

Given a metric space $X$, we will sometimes want to replace
it by a graph $\Gamma$ with the same geometry as $X$.
One way of proceeding is to use the {\bf Rips graph}.
\begin{defn} \label{rps}
Given any metric space $(X,d_X)$ and any $r>0$,
the {\bf Rips graph}  $\Gamma_r(X)$ is defined to be
the graph with vertex set $X$ and an edge
between $x,y\in X$ if and only if $0<d_X(x,y)\leq r$.
\end{defn}
Note that $\Gamma_r(X)$ will be a connected graph if and only if our original
space $X$ is $r$-coarse connected. If so then we think of $\Gamma_r(X)$ as
a metric space itself with the standard path metric $d_\Gamma$
where every edge has
length 1. Indeed in this case $\Gamma_r(X)$ will be a geodesic metric
space. Moreover given any non empty subset $S$ of $X$, we can form
the Rips graph $\Gamma_r(S)$ where $S$ has the subspace metric inherited from
$X$. For instance if $X$ is itself a locally finite connected graph equipped
with the path metric (and without self loops or multiple edges)
then $\Gamma_1(X)$ is (a point or) a graph with uncountable valence at
every vertex, whereas on forming $\Gamma_1(V(X))$ for $V(X)$ the vertex set of
$X$, we get $X$ again.

However in general, even if $\Gamma_r(X)$ is a connected graph
it might not be quasi-isometric to $X$. Moreover we would like these two
spaces to be naturally quasi-isometric,
meaning that the natural map $f:X\rightarrow\Gamma_r(X)$ sending a point in
$X$ to the corresponding vertex in the Rips graph is a quasi-isometry. If so
then the map $g:\Gamma_r(X)\rightarrow X$ that first sends points on edges to
a nearest vertex and then to the corresponding point in $X$ will be a
quasi-inverse of $f$. This is true for every $r>0$ if $X$ is a geodesic
metric space. But we
will also be working with subspaces of geodesic metric spaces which
need not be geodesic metric spaces themselves. However if
we do not require this condition on the geometry of $\Gamma_r(X)$
to hold for all $r>0$ but for all sufficiently large $r$ then there
is a clear condition for exactly when this happens and it gives rise
to a wider class of metric spaces than the geodesic metric spaces.
\begin{thm} \label{qugd}
  Let $(X,d_X)$ be a metric space and $\Gamma_r(X)$ be the Rips graph for any
  $r>0$. Then $\Gamma_r(X)$ is connected if and only if $X$ is $r$-coarse
  connected, in which case $\Gamma_s(X)$ is connected for all $s\geq r$.\\
  If $X$ is coarse connected then the following are equivalent:\\
(i) There exists $s$ with $\Gamma_s(X)$ connected and quasi-isometric to $X.\\$
(ii) $X$ is quasi-isometric to some geodesic metric space $(Y,d_Y)$.\\
(iii) For all sufficiently large $s$, the natural inclusion of $X$ in
$\Gamma_s(X)$ is a quasi-isometry.
\end{thm}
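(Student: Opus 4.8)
The plan is to dispatch the opening biconditional directly from the definitions and then to close the loop (iii)$\Rightarrow$(i)$\Rightarrow$(ii)$\Rightarrow$(iii), with essentially all of the content in the final implication. For the first statement, observe that a path in the graph $\Gamma_r(X)$ is exactly a finite sequence of points whose consecutive members are equal or at $d_X$-distance at most $r$, which is precisely an $r$-coarse path in the sense of the definition. Hence $\Gamma_r(X)$ is connected if and only if any two points of $X$ are joined by such a path, namely if and only if $X$ is $r$-coarse connected; and since $r$-coarse connectedness yields $s$-coarse connectedness for every $s\geq r$, the connectedness of $\Gamma_s(X)$ for all such $s$ is immediate.

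Turning to the equivalences under the standing assumption that $X$ is coarse connected, I would first note that (iii)$\Rightarrow$(i) is trivial: any sufficiently large $s$ makes $X$ be $s$-coarse connected, so $\Gamma_s(X)$ is connected, and the natural inclusion is a quasi-isometry by hypothesis. For (i)$\Rightarrow$(ii), a connected graph carrying its path metric is a geodesic metric space, so $\Gamma_s(X)$ itself furnishes the required $(Y,d_Y)$.

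The substance is (ii)$\Rightarrow$(iii), and here the geodesic hypothesis on $Y$ is exactly what is needed. Fix a quasi-isometry $q\colon X\to Y$ with quasi-inverse $p\colon Y\to X$, and let $D$ uniformly bound $d_X(x,p(q(x)))$. Given $x,y\in X$, I would take a geodesic in $Y$ from $q(x)$ to $q(y)$ and sample it at unit spacing; applying $p$ to the sample points produces a sequence in $X$ whose consecutive $d_X$-distances are bounded by a constant $\kappa$ depending only on the quasi-isometry constants, and prepending $x$ and appending $y$ (which are within $D$ of the endpoints $p(q(x)),p(q(y))$) yields an $s$-coarse path once $s\geq\max(D,\kappa)$. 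Since the number of sample points is linear in the geodesic length $d_Y(q(x),q(y))$, and this is linear in $d_X(x,y)$, one obtains an upper bound $d_{\Gamma_s}(x,y)\leq K'd_X(x,y)+\epsilon'$. The matching lower bound $d_{\Gamma_s}(x,y)\geq (1/s)d_X(x,y)$ holds for every $s$ and every metric space, because each edge of $\Gamma_s(X)$ spans $d_X$-distance at most $s$, so a geodesic of $\Gamma_s(X)$ of combinatorial length $n$ forces $d_X(x,y)\leq ns$. Together these show the natural inclusion is a quasi-isometric embedding for every $s$ above the threshold, and since it is onto the vertex set it is automatically coarse onto, hence a quasi-isometry.

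The main obstacle is obtaining the upper Lipschitz bound in (ii)$\Rightarrow$(iii), that is, controlling the \emph{number} of coarse steps required to cross $X$: an arbitrary quasi-geodesic in $X$ could be very inefficient, but sampling a genuine geodesic in $Y$ and transporting it by $p$ keeps the step count proportional to $d_X(x,y)$. Making the choice of $s$ uniform (depending only on $D$ and the quasi-isometry constants, not on the pair $x,y$) is the one point requiring care.
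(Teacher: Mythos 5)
Your argument is correct, but for the substantive implication (ii)$\Rightarrow$(iii) it takes a genuinely different route from the paper. The paper disposes of (ii)$\Rightarrow$(iii) by citation: it invokes the notion of a ($c$-)large scale geodesic space from Cornulier--de la Harpe (Definition 3.B.1 of \cite{dcdlh}), notes that this property is a quasi-isometry invariant satisfied by all geodesic spaces, and that for a general space it is equivalent to the natural map into $\Gamma_c(X)$ being a quasi-isometry. You instead prove the implication from scratch: sample a genuine geodesic in $Y$ at unit spacing, transport the sample points back through the quasi-inverse $p$ to get an $s$-coarse path in $X$ whose step count is linear in $d_X(x,y)$, and pair the resulting upper Lipschitz bound with the trivial lower bound $d_{\Gamma_s}(x,y)\geq (1/s)\,d_X(x,y)$. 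This is essentially the proof hiding inside the cited result, so the mathematical content is the same, but your version is self-contained and makes the dependence of the threshold $s$ on the quasi-isometry constants and the quasi-inverse bound $D$ explicit, which the paper leaves implicit; the paper's version buys brevity and situates the statement within an established framework (large scale geodesicity) that it reuses conceptually elsewhere. The remaining pieces --- the identification of edge paths in $\Gamma_r(X)$ with $r$-coarse paths, the trivial implications (iii)$\Rightarrow$(i) and (i)$\Rightarrow$(ii) --- match the paper's treatment, which simply declares them clear.
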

\begin{proof}
The first part is clear from the definition, whereupon we will have
  \[d_X(a,b)\leq r d_\Gamma(a,b)\]
  for all $a,b\in X$. Again (i) implies (ii) is clear since
  $\Gamma_s(X)$ with the path metric is a geodesic metric space. For (ii)
  implies (iii), we invoke \cite{dcdlh} Chapter 3. In particular 
  Definition 3.B.1 defines a space $X$ being ($c$-)large scale geodesic which
  is seen to be preserved (though not the value of $c$) under quasi-isometry.
  This large scale geodesic property is satisfied by all geodesic spaces
  for any $c>0$ and for a general space $X$ it
  is equivalent to saying that the natural map from $X$ to $\Gamma_c(X)$ is a
  quasi-isometry. Thus if $X$ is quasi-isometric to a geodesic space $Y$
  then $X$ is $s$-large scale geodesic for all large $s$. Now (iii)
  implies (i) is clear.
\end{proof}

Because of this theorem, we can define the following.
\begin{defn}
  A metric space $(X,d_X)$ is a {\bf quasi-geodesic space} if it is
  quasi-isometric to some geodesic metric space.
\end{defn}
There are various equivalent definitions of quasi-geodesic spaces in the
literature, sometimes using this name and sometimes using other terminology.

\section{Group actions and quasi-actions}

\subsection{Definitions and Properties}

So far we have only dealt with metric spaces but our
interest here is with isometric actions of a group on a metric space $X$
or indeed the more general notion of a quasi-action, which we now define.

\begin{defn}
Given $K\geq 1$ and $\epsilon,C\geq 0$,  
a $(K,\epsilon,C)$ {\bf quasi-action} of a group $G$ on
a metric space $(X,d_X)$ is a map $\alpha:G\times X\rightarrow X$ such that:\\
(1)
For each $g\in G$ the map $A_g$ given by $A_g(x)=\alpha(g,x)$ is a
$(K,\epsilon,C)$ quasi-isometry of $X$.\\
(2) For every $x\in X$ we have $d_X(\alpha(id,x),x)\leq C$.\\
(3) For every $x\in X$ and $g,h\in G$ we have
\[d_X(\alpha(g,\alpha(h,x)),\alpha(gh,x))\leq C.\]

A {\bf quasi-action} is a $(K,\epsilon,C)$ quasi-action for some
$K,\epsilon,C$.
\end{defn}
Note: there is variation in the literature on whether (2) is included but
it actually follows from the other axioms (because even
without (2) the map
$g\mapsto [A_g]$ from $G$ to the quasi-isometry group of $X$ is a
homomorphism). Therefore we will feel free to omit (2) in proofs.

Obviously a $(1,0,0)$-quasi-action is the same as
an isometric action. Note that
if $C=0$ then we have an action under the usual definition of $G$ on $X$
considered as a set and in particular all elements of $G$ will give rise to
bijections. However this will be an action by (uniform)
quasi-isometries rather than isometries.

The following is the main source of examples for quasi-actions which
are not genuine isometric actions.
\begin{ex} \label{qunew}
Suppose we have two metric spaces $X$ and $Y$
and a quasi-isometry $q:X\rightarrow Y$ with quasi-inverse $r:Y\rightarrow X$.
If $\alpha$ is a quasi-action of $G$ on $X$ (and in particular if it is
a genuine isometric action) then it is easily checked that
$\beta(g,y):=q(\alpha(g,r(y)))$ is a quasi-action of $G$ on $Y$.
\end{ex}

We now define the main concepts obtained from a quasi-action $\alpha$
which we will
be using. These all reduce to well known definitions if $\alpha$ is a genuine
action but sometimes the behaviour is a little different in the more general
case.
\begin{defn} If $\alpha$ is a quasi-action of $G$ on $X$ then for 
$x_0\in X$ the
  {\bf quasi-orbit} ${\mathcal Q}(x_0)$ of $x_0$ is the subset
    $\{\alpha(g,x_0)\,|\,g\in G\}$ of $X$.
  \end{defn}
  Clearly this reduces to the standard notion of an orbit for a genuine action
  but note that in general ${\mathcal Q}(x_0)$ need not be closed under
  composition (or even contain $x_0$):
  $A_g(A_h(x_0))=\alpha(g,\alpha(h,x_0))$ need not be in ${\mathcal Q}(x_0)$,
  although it will be near the point $\alpha(gh,x_0)$ which is.
\begin{defn} We say the quasi-action $\alpha$ of $G$ on $X$ is {\bf bounded}
(which is shorthand for has bounded quasi-orbits)
  if there exists $x_0\in X$ such that the quasi-orbit ${\mathcal Q}(x_0)$
    is a bounded subset of $X$.
\end{defn}
Note that if one quasi-orbit is bounded then they all are, because
quasi-orbits are quasi-isometric (any two are Hausdorff close).
We now look at the case of quasi-actions of cyclic groups, following
\cite{mnlms} Definition 2.13.
\begin{defn} \label{elp}
  Given a quasi-action $\alpha$ of a group $G$ on $X$ and an
  element $g\in G$, we say that:\\
  $g$ is {\bf elliptic} if the quasi-action $\alpha$ restricted to
  $\langle g\rangle$ is bounded; that is\\
  $\{\alpha(g^n,x_0)\,|\, n\in\Z\}$
  is a bounded set.\\
  $g$ is {\bf loxodromic} if $g$ 
  the quasi-action $\alpha$ restricted to $\langle g\rangle$
  quasi-isometrically
  embeds; that is the map $n\mapsto \alpha(g^n,x_0)$ is a quasi-isometric
  embedding of $\Z$ in $X$.\\
  $g$ is {\bf parabolic} if it is neither elliptic nor loxodromic.
\end{defn}
It is again easily checked that this definition is independent of the point
$x_0$, that $g$ and $g^k$ have the same type for $k\neq 0$ and that conjugate
elements have the same type.
\begin{defn} \label{dfcb}
We say that the $(K,\epsilon,C)$ quasi-action $\alpha$ of $G$
    on $X$ is
{\bf cobounded} if there is a point $x_0\in X$ whose quasi-orbit
${\mathcal Q}(x_0)$ is a coarse dense subset of $X$.
\end{defn}
Note that this definition is (uniformly) independent of $x_0$ (if
${\mathcal Q}(x_0)$ is $c$-coarse dense and we take any $x\in X$ then
${\mathcal Q}(x)$ is $(c(K+1)+\epsilon+C)$-coarse dense).
\begin{defn} \label{dfmp}
We say that the $(K,\epsilon,C)$
  quasi-action $\alpha$ of $G$ on $X$
is {\bf metrically proper} if there is $x_0\in X$ such that for all $R\geq 0$
  the set $\{g\in G\,|\,d_X(\alpha(g,x_0),x_0)\leq R\}$ is finite.
\end{defn}
Again the choice of $x_0$ does not matter as for any $x\in X$ and $R\geq 0$
we have
\[\{g\in G\,|\,d_X(\alpha(g,x),x)\leq R\}\subseteq
  \{g\in G\,|\,d_X(\alpha(g,x_0),x_0)\leq R+(K+1)c_x+\epsilon\}\]
where $c_x=d_X(x,x_0)$.

There are various notions of a proper action in the literature. These are
not equivalent, even for isometric actions. Indeed acting metrically
properly is stronger in general for isometric actions than
saying that $\{g\in G\,|\,g(K)\cap K\neq\emptyset\}$ is finite for all
compact subsets $K$ of $X$. On the other hand our definition is weaker than
the definition of a proper quasi-action given in \cite{msw} which would
usually be called uniformly
metrically proper. However in this paper, the only reference that will
be made about quasi-actions possessing some sort of properness 
will be Definition \ref{dfmp}.

\subsection{Comparing actions and quasi-actions}
Suppose a given group $G$ has (quasi-)actions on two metric spaces $X,Y$.
We need a notion of when these (quasi-)actions are similar enough to be
regarded as essentially equivalent. We would hope that any of the above
properties of the (quasi-)action would be preserved in this case. If we
first regard $X$ and $Y$ as just sets then the relevant notion is that
of a map $F:X\rightarrow Y$ being equivariant, namely $F(g(x))=g(F(x))$
for all $g\in G$ and $x\in X$ or $F(\alpha(g,x))=\beta(g,F(x))$ in the
notation of quasi-actions $\alpha$ on $X$ and $\beta$ on $Y$.
The following coarse version is widely used.

\begin{defn}
  Suppose that a group $G$ has quasi-actions $\alpha$ and $\beta$ on
  the metric spaces $(X,d_X)$ and $(Y,d_Y)$ respectively.
We say that a function $F:X\rightarrow Y$ is {\bf coarse $G$-equivariant}
if there is $M\geq 0$ such that for all $x\in X$ and all $g\in G$ we
have $d_Y(F(\alpha(g,x)),\beta(g,F(x)))\leq M$.
\end{defn}

\begin{ex} \label{equvt}
  (1) Suppose that $G$ quasi-acts on an arbitrary metric space $X$ via $\alpha$
  and $Y$ is a bounded metric space. Then any function
  $\beta:G\times Y\rightarrow Y$ is a quasi-action of $G$ on
  $Y$ and any map $F:X\rightarrow Y$ is a coarse $G$-equivariant map.\\
  (2) Now suppose that $X$ is any connected graph on which $G$ acts by
  automorphisms, thus also by isometries and $Y$ is the vertex set
  of $X$ with the same path metric. Then $G$ also acts on $Y$ by restriction.
We can create a (non-canonical) map $F$ which sends any
point of $X$ to a nearest vertex $v\in Y$ and this is a coarse $G$-equivariant
map with $M=1$.  
\end{ex}

However coarse $G$-equivariant maps on their own are not so useful without
some sort of restriction on the map itself (they need not even be preserved
under composition).
We would like $G$-equivariant maps $F$ which preserve
(perhaps forwards or backwards but ideally both ways)
properties of a quasi-action. For this we can take
$F$ to be a quasi-isometry and indeed we have even seen how to use such maps
to create new quasi-actions in Example \ref{qunew}. For quasi-actions of
a fixed group $G$ on arbitrary metric spaces, the notion of a
{\bf coarse $G$-equivariant quasi-isometry} is an equivalence relation
(for instance see \cite{for} Lemma 7.2 for isometric actions, which goes
through straightforwardly when generalised to quasi-actions).
If a group $G$ has quasi-actions $\alpha,\beta$ on spaces $X,Y$
respectively and there is a coarse $G$-equivariant quasi-isometry
from one space to the other, we will sometimes abbreviate this by saying
that the two quasi-actions of $G$ are {\bf equivalent} (this is also
referred to as being quasi-conjugate).

However this relation between two actions is sometimes too rigid for our
purposes, so we consider a more flexible version.
\begin{defn}
  If a group $G$ has quasi-actions $\alpha,\beta$ on the spaces $X,Y$
  respectively then a {\bf coarse $G$-equivariant quasi-isometric
    embedding} from $\alpha$ to $\beta$ is a quasi-isometric embedding
  from $X$ to $Y$ which is coarse $G$-equivariant with respect to
  $\alpha$ on $X$ and $\beta$ on $Y$. We say that the quasi-action $\beta$
  {\bf reduces} to $\alpha$.
\end{defn}  
This is not an equivalence relation as the examples below show.
However it is easily checked that
coarse $G$-equivariant quasi-isometric embeddings compose as such, so
reduction of quasi-actions is transitive.
\begin{ex} \label{ex2}
  (1) In (1) from Example \ref{equvt}, $F$ will not be a quasi-isometry or
  quasi-isometric embedding unless $X$ is bounded (in which case it is
  both). Now suppose that $X$ is unbounded and for $x_0\in X$
  we consider the constant map $F:Y\rightarrow X$ going the other way, given
  by $F(y)=x_0$. Then $F$ clearly is not a quasi-isometry. It is a
  coarse $G$-equivariant quasi-isometric embedding if and only if the
  quasi-action of $G$ on $X$ is bounded.\\
  (2) In (2) from Example \ref{equvt} $F$ is a coarse $G$-equivariant
  quasi-isometry and for a (coarse) $G$-equivariant quasi-inverse of $F$
  we can take the map from $Y$ to $X$ which sends a vertex to itself.\\
(3) In Example \ref{qunew},  $q$ and $r$ both become coarse $G$-equivariant
quasi-isometries. A particularly useful case of this is where $Y$ is a
connected graph $\Gamma$ equipped with the path metric and $X$ is the vertex
set of $\Gamma$, with $q$ the vertex inclusion map and $r$ a nearest vertex
map $F$ as in Example \ref{equvt} (2). Thus if we have some quasi-action
$\alpha$ defined on $X$ then we can immediately assume
that $\alpha$ extends to a quasi-action on all of $\Gamma$. As we have a
coarse $G$-equivariant quasi-isometry between the original action and its
extension, we can in effect regard them as the same quasi-action and we
will use this in what follows. Similarly an arbitrary quasi-action
of a group on $\Gamma$ can be assumed to send vertices to vertices.
Or we can even replace the graph $\Gamma$ with the vertex set $X$ as the
space on which $G$ quasi-acts.\\
(4) Given a quasi-action $\alpha$ of $G$ on $X$ with basepoint $x_0$, take any
element $g\in G$. Then the map $F:\Z\rightarrow
X$ given by $F(n)=\alpha(g^n,x_0)$ is a coarse $\langle g\rangle$-equivariant
quasi-isometric embedding if and only if $g$ is loxodromic. If so then we
can also regard this as a coarse $\langle g\rangle$-equivariant
quasi-isometric embedding from $\R$ to $X$ by the point above in (3).
However $F$ will not be a quasi-isometry (unless $X$ itself is quasi-isometric
to $\R$). 
\end{ex}

It is straightforward to see
using the various definitions that if $\alpha$ on $X$
and $\beta$ on $Y$ are two quasi-actions of a group $G$ which are related
by a coarse $G$-equivariant quasi-isometry $F$ then any of the above
properties of quasi-actions are preserved by $F$. We record without
proof the equivalent statements for quasi-isometric embeddings, which all
follow by the usual arguments involving manipulating definitions and
playing with constants.
\begin{prop} \label{qpres}
  Let $F:X\rightarrow Y$ be a coarse $G$-equivariant quasi-isometric
  embedding with respect to the quasi-actions $\alpha$ of $G$ on $X$ and
  $\beta$ of $G$ on $Y$. Then:\\
  (1) $\alpha$ is bounded if and only if $\beta$ is bounded.\\
  (2) Any element $g\in G$ quasi-acts elliptically (respectively
  loxodromically, respectively parabolically) under $\alpha$ if and only if
  it quasi-acts
  elliptically (respectively loxodromically, respectively parabolically)
  under $\beta$.\\
  (3) The quasi-action $\alpha$ is metrically proper if and only if the
  quasi-action $\beta$ is metrically proper.\\
  (4) If the quasi-action $\beta$ is cobounded then the quasi-action
  $\alpha$ is also cobounded but not necessarily the other way around.
\end{prop}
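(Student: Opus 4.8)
The whole proposition rests on exactly two inequalities: the defining two-sided bound of the $(K',\epsilon')$ quasi-isometric embedding $F$, and the coarse $G$-equivariance bound $d_Y(F(\alpha(g,x)),\beta(g,F(x)))\le M$. The plan is to fix once and for all a basepoint $x_0\in X$ and to use $y_0=F(x_0)$ as the basepoint on the $Y$ side; since boundedness, the elliptic/loxodromic/parabolic trichotomy, metric properness and coboundedness have all been shown above to be independent of the chosen basepoint, this costs nothing. The single recurring device is the ``bridge'' that $F(\alpha(g,x_0))$ lies within $M$ of $\beta(g,F(x_0))$, which lets me transport any statement about the $\alpha$-quasi-orbit $\mathcal Q(x_0)$ in $X$ into a statement about the $\beta$-quasi-orbit $\mathcal Q(y_0)$ in $Y$ and back.

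For (1), boundedness of a quasi-orbit is just a diameter condition, so I would push it forward through the upper bound of $F$ (a bounded set maps to a bounded set) and pull it back through the lower bound of $F$, inserting the constant $M$ at each use of the bridge. Part (2) then reduces to (1) for the elliptic case, since $g$ is elliptic precisely when $\alpha$ restricted to $\langle g\rangle$ is bounded, and $F$ is automatically a coarse $\langle g\rangle$-equivariant quasi-isometric embedding; applying (1) to this restriction gives the equivalence. For the loxodromic case I would observe that $n\mapsto F(\alpha(g^n,x_0))$ is within $M$ of $n\mapsto\beta(g^n,F(x_0))$, so these two maps $\Z\to Y$ are Hausdorff close and hence one is a quasi-isometric embedding if and only if the other is. Composing the quasi-isometric embedding $n\mapsto\alpha(g^n,x_0)$ with $F$ shows the $\beta$-side is a quasi-isometric embedding; conversely the lower bound of $F$ lets me cancel $F$ and recover the $\alpha$-side. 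The parabolic case is then immediate as the logical complement of the other two, which are now both preserved in each direction.

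For (3), metric properness is the finiteness of every sublevel set $\{g:d_X(\alpha(g,x_0),x_0)\le R\}$. Here I would show directly that each such set is contained in a corresponding sublevel set on the other side, with $R$ rescaled by $K'$ and shifted by $M$ and $\epsilon'$: the upper bound of $F$ handles one inclusion and the lower bound the other, so finiteness transfers symmetrically. For the forward direction of (4), assume $\mathcal Q(y_0)$ is $c$-coarse dense in $Y$; given any $x\in X$, choose $g$ with $\beta(g,y_0)$ within $c$ of $F(x)$, bridge across to $F(\alpha(g,x_0))$ at cost $M$, and then apply the lower bound of $F$ to conclude that $\alpha(g,x_0)$ lies within $K'(c+M+\epsilon')$ of $x$, so $\mathcal Q(x_0)$ is coarse dense and $\alpha$ is cobounded. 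The failure of the converse needs no argument of its own: it is already witnessed by Example \ref{ex2}, for instance by a loxodromic element, whose associated quasi-isometric embedding of the (cobounded) $\langle g\rangle$-line into $X$ has image that is typically not coarse dense.

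There is no serious obstacle here --- this is exactly the kind of statement one expects to hold --- and the only place demanding any care is the converse (pullback) directions, where $F$ is merely a quasi-isometric embedding rather than a quasi-isometry. What makes these go through is precisely the lower bound $(1/K')d_X(x,x')-\epsilon'\le d_Y(F(x),F(x'))$, which supplies the missing control of distances in $X$ from distances in $Y$. Once one is comfortable invoking this bound, every clause of the proposition is a short manipulation of the two governing inequalities together with a single insertion of the equivariance constant $M$, which is why the statement can reasonably be recorded without a detailed proof.
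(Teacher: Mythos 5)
Your argument is correct and is precisely the ``usual argument involving manipulating definitions and playing with constants'' that the paper explicitly invokes while recording Proposition \ref{qpres} without proof: fix a basepoint, use the equivariance constant $M$ as a bridge between $F(\alpha(g,x_0))$ and $\beta(g,F(x_0))$, and transfer each property via the two-sided bound of the quasi-isometric embedding, with the lower bound supplying the pullback directions and the counterexample to the converse of (4) matching the paper's remark about embedding into a larger space. Nothing to add.
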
  

Counterexamples in (4) are easily given by embedding $X$ in a much bigger
space $Y$. This failure is crucial for us because it means that given a
non-cobounded quasi-action $\beta$ on a space $Y$,
we will be able to find a cobounded quasi-action $\alpha$ of $G$ on
a space $X$ and a coarse $G$-equivariant quasi-isometric
embedding $F$ from $X$ to $Y$ (and we will give conditions on when we can
take $X$ to be a suitably well behaved space).
This means that all relevant properties
of $\beta$ (apart from non-coboundedness of course) will be preserved by
$\alpha$ but the latter quasi-action should be easier to study.
If we insisted further that $F$ was a quasi-isometry then (by using
Proposition \ref{qpres} (4) and taking a quasi-inverse) $\alpha$ is
cobounded if and only if $\beta$ is.
Thus with this approach, examining cobounded actions would not tell
us about non-cobounded ones.
  
The use of quasi-actions above
begs the following question. Suppose that $G$ has a
quasi-action $\alpha$ on a metric space $X$. As actions are easier to
understand than quasi-actions, is there a genuine isometric action $\beta$
of $G$ on $X$ itself, or on some other space $Y$ quasi-isometric
to $X$, which is equivalent to $\alpha$?

If we insist on keeping
the same space $X$ then the answer is definitely no. For instance we could
have some space $X'$ quasi-isometric to $X$ where $X'$ admits an unbounded
isometric action $\alpha$ but the isometry group of $X$ is trivial. Then
we can use $\alpha$ to form an unbounded quasi-action on $X$ via Example
\ref{qunew} but there are no such actions. However the answer
is yes if we allow ourselves to change the space, as shown by the
following results of Kleiner and Leeb, and of Manning. 
\begin{thm} \label{gnact}
  (i) (\cite{kllb}) Given any quasi-action $\alpha$ of any group $G$ on a metric
  space $X$,
there is an isometric action $\beta$ of $G$ on a metric space $Y$ and a coarse
$G$-equivariant quasi-isometry from $\alpha$ to $\beta$.\\
(ii) (\cite{mngeom} Proposition 4.4)
Given any quasi-action $\alpha$ of any group $G$ on a
geodesic metric space $X$,
there is an isometric action $\gamma$ of $G$ on a geodesic
metric space $Z$ and a coarse
$G$-equivariant quasi-isometry from $\alpha$ to $\gamma$.
\end{thm}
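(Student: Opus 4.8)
The plan is to realise the quasi-action by isometries on a space of $X$-valued functions on $G$, equipped with a supremum metric, on which $G$ acts by translating the argument. Fix a basepoint $x_0\in X$ and, for each $x\in X$, define the orbit map $F_x\colon G\to X$ by $F_x(g)=\alpha(g,x)$. Because every $A_g$ is a $(K,\epsilon)$ quasi-isometric embedding with the \emph{same} constants, the quantity $d(f,f')=\sup_{g\in G}d_X(f(g),f'(g))$ is finite between any two orbit maps and satisfies $(1/K)d_X(x,y)-\epsilon\le d(F_x,F_y)\le Kd_X(x,y)+\epsilon$, so $x\mapsto F_x$ is a quasi-isometric embedding. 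The group acts on functions by $(h\cdot f)(g)=f(gh)$, which is a genuine left action (one checks $h_1\cdot(h_2\cdot f)=(h_1h_2)\cdot f$) and is isometric for $d$, since right translation of the argument merely reindexes the supremum.

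For part (i) I would then take $Y$ to be the $G$-orbit of $\{F_x : x\in X\}$ under this isometric action, with the subspace metric $d$; this set is $G$-invariant by construction and contains the image of $x\mapsto F_x$. The quasi-action axioms do the remaining work. Axiom (3) gives $d_X(\alpha(gh,x),\alpha(g,\alpha(h,x)))\le C$ for every $g$, hence $d(h\cdot F_x,F_{\alpha(h,x)})\le C$; this is precisely coarse $G$-equivariance of $x\mapsto F_x$ with constant $C$, and it simultaneously shows that every element $h\cdot F_x$ of $Y$ lies within $C$ of the point $F_{\alpha(h,x)}$ of the image. Thus the image is $C$-coarse dense in $Y$, so $x\mapsto F_x$ is coarse onto and is therefore a coarse $G$-equivariant quasi-isometry from $\alpha$ to the isometric action $\beta$ on $Y$.

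For part (ii), $X$ is geodesic and hence coarse connected, and since $Y$ is quasi-isometric to $X$ it too is coarse connected by the preservation properties recorded after the quasi-isometry definition; so I can feed $Y$ into Theorem~\ref{qugd}. For all sufficiently large $s$ the Rips graph $\Gamma_s(Y)$ is connected, the inclusion $Y\hookrightarrow\Gamma_s(Y)$ is a quasi-isometry, and $\Gamma_s(Y)$ is geodesic. Because the $G$-action on $Y$ is isometric it preserves the edge relation $0<d(f,f')\le s$, so it acts by simplicial automorphisms, hence isometrically, on $Z:=\Gamma_s(Y)$, and the vertex inclusion is genuinely $G$-equivariant. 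Composing the quasi-isometry $X\to Y$ from part (i) with $Y\hookrightarrow Z$ yields the required coarse $G$-equivariant quasi-isometry from $\alpha$ to an isometric action $\gamma$ on the geodesic space $Z$.

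Most of this is routine bookkeeping with constants; the one point needing care is coboundedness (coarse onto) in part (i). The naive target, the set of all functions at finite supremum distance from $F_{x_0}$, is $G$-invariant but far too large for the image to be coarse dense in it, so the fix is to take exactly the $G$-orbit of the image and then read off coarse density directly from axiom (3). The only other mild subtlety is checking that the supremum metric is finite, which is where the uniformity of the quasi-isometry constants across all the maps $A_g$ is essential.
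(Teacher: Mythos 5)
Your proposal is correct: the orbit-map construction into the function space with the supremum metric, with $G$ acting by right translation of the argument and $Y$ taken to be the $G$-orbit of the image so that axiom (3) gives both coarse equivariance and coarse density, is essentially the Kleiner--Leeb argument that the paper imports by citation for part (i); and your Rips-graph step for part (ii) is exactly the mechanism the paper itself sets up in Theorem \ref{qugd} and Subsection 3.3 (an isometric action preserves the edge relation, so it extends to a simplicial, hence isometric, action on the geodesic space $\Gamma_s(Y)$). Since the paper gives no proof of its own for this theorem, there is nothing further to compare; your argument is complete and follows the standard route.
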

In Theorem \ref{gnact} (ii) we can obviously replace $X$ being geodesic
with $X$ being quasi-geodesic by Example \ref{qunew}.

\subsection{Reductions of actions and quasi-actions}

Given a quasi-action $\alpha$ of $G$ on a metric space $X$,
we now consider reductions of $\alpha$ to other quasi-actions
and in particular whether there is (are)
a preferred reduction(s). There are two features we might ask for:
a reduction where the underlying
space $Y$ is well behaved, say a geodesic space, or a reduction that
is minimal. In each case, we would ideally want to end up with an isometric
action rather than just a quasi-action.

To deal with the first question,
suppose initially that we have a group $G$ with a genuine isometric action
on some metric space $(X,d_X)$ which we would like to reduce to an
action on some geodesic space if possible (here we do not assume that $X$
is itself a quasi-geodesic space).
Let us return to Definition \ref{rps} where we considered Rips
graphs. On forming $\Gamma_r(X)$ for a given $r>0$, we can define
$G$ to act on the vertices of $\Gamma_r(X)$ in the same
way, at least combinatorially, as it does on $X$.
Moreover this clearly extends to an
action of $G$ on $\Gamma_r(X)$ by graph automorphisms, because for
$x,y\in X$ the question of whether $d_X(x,y)\leq r$ is the same as whether
$d_X(gx,gy)\leq r$. 
Therefore if $r$ is such that $\Gamma_r(X)$ is
connected, we have a natural action of $G$ on $\Gamma_r(X)$ by
isometries. Moreover this construction also works on replacing $X$ with
any subset $S$ of $X$ (using the subspace metric for $S$)
which is $G$-invariant and again if we have $s>0$ with $\Gamma_s(S)$ connected
then we have an isometric action of $G$ on $\Gamma_s(S)$ too. If further
we have that $S$ is a quasi-geodesic metric space then the conditions
of Theorem \ref{qugd} are satisfied. This allows us to
conclude that if we can find a $G$-invariant
subset of $X$ which is a quasi-geodesic space
then we obtain a coarse $G$-equivariant quasi-isometry from
$\Gamma_s(S)$ to $S$ and thence a reduction of our action on $X$
to an action on the geodesic space $\Gamma_s(S)$.

Now suppose that $G$ has a quasi-action on our space $X$, rather than
an isometric action. Theorem \ref{gnact} (i) means that we can first replace
$\alpha$ with an equivalent isometric action $\beta$ on another metric space
$Y$, then use our comment above. Thus we are after subsets of $Y$ which
are invariant under this isometric action, with the obvious choice 
of restricting $\beta$ to a single orbit
$G\cdot y_0$ (with the subspace metric inherited from $Y$). This means
we have reduced $\alpha$ to an isometric action which is now cobounded.
The problem with this approach
is that we could end up with a badly behaved space: for instance if $G$ is
a countable group then $G\cdot y_0$ will never be a geodesic space (unless
it is a single point). However if
$G\cdot y_0$ were a quasi-geodesic space then 
we could now apply Theorem \ref{gnact} (ii) to this restriction
of $\beta$ thus
obtaining a reduction of $\beta$, and hence $\alpha$, to a cobounded
isometric
action on a geodesic space. It turns out that this condition on a
(quasi-)orbit is necessary, because if the
original quasi-orbits are badly behaved then this behaviour will persist:
\begin{prop} \label{orgi} If we have a reduction of the quasi-action $\alpha$
  on some space $X$ to the quasi-action $\beta$ on the space $Y$ then
  any quasi-orbit ${\mathcal Q}(x_0)\subseteq X$
  of $\alpha$ is quasi-isometric to any
  quasi-orbit ${\mathcal Q}(y_0)\subseteq Y$ of $\beta$.
\end{prop}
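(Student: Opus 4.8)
The plan is to reduce the statement to comparing a single quasi-orbit on each side. Recall from the remark following the definition of a bounded quasi-action that within a fixed space any two quasi-orbits are Hausdorff close, hence quasi-isometric with their subspace metrics. Consequently, by transitivity of quasi-isometry, it is enough to exhibit one quasi-orbit ${\mathcal Q}(y_0)\subseteq Y$ of $\beta$ that is quasi-isometric to one quasi-orbit ${\mathcal Q}(x_0)\subseteq X$ of $\alpha$: the general case then follows by composing with the within-space quasi-isometries at each end.

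By hypothesis the reduction is witnessed by a coarse $G$-equivariant quasi-isometric embedding, which by the definition goes $F:Y\rightarrow X$ (and since quasi-isometry is symmetric, the argument below is insensitive to this choice of direction). Let $M\geq 0$ be the equivariance constant. First I would fix any $y_0\in Y$ and set $x_0:=F(y_0)$. Restricting $F$ to the subset ${\mathcal Q}(y_0)$, equipped with the subspace metric, leaves it a quasi-isometric embedding with the same constants; and as any map is (trivially) coarse onto its own image, $F$ is in fact a quasi-isometry from ${\mathcal Q}(y_0)$ onto $F({\mathcal Q}(y_0))$.

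The only place the hypotheses do real work is in identifying $F({\mathcal Q}(y_0))$ coarsely with ${\mathcal Q}(x_0)$. For each $g\in G$, coarse $G$-equivariance gives $d_X(F(\beta(g,y_0)),\alpha(g,x_0))\leq M$. As $g$ ranges over $G$ the points $F(\beta(g,y_0))$ range over $F({\mathcal Q}(y_0))$ and the points $\alpha(g,x_0)$ range over ${\mathcal Q}(x_0)$, and this inequality pairs them up index by index. Hence each point of $F({\mathcal Q}(y_0))$ lies within $M$ of a point of ${\mathcal Q}(x_0)$ and vice versa, so $F({\mathcal Q}(y_0))$ and ${\mathcal Q}(x_0)$ are within Hausdorff distance $M$ in $X$ and are therefore quasi-isometric. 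Composing with the quasi-isometry of the previous paragraph shows ${\mathcal Q}(y_0)$ is quasi-isometric to ${\mathcal Q}(x_0)$, completing the reduction. I expect this Hausdorff estimate to be the main (indeed only) point of substance; the remaining ingredients are the standard facts that a quasi-isometric embedding restricts to one, that it is a quasi-isometry onto its image, and that Hausdorff-close subsets are quasi-isometric, together with the bookkeeping of constants.
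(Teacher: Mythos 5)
Your proposal is correct and follows essentially the same route as the paper's proof: restrict $F$ to a single quasi-orbit ${\mathcal Q}(y_0)$, observe it is a quasi-isometry onto its image, and use coarse $G$-equivariance to show that image is Hausdorff close to ${\mathcal Q}(F(y_0))$, with the reduction to one quasi-orbit per space justified by the fact that quasi-orbits within a fixed space are Hausdorff close. Your write-up simply makes the constants and the Hausdorff estimate more explicit than the paper does.
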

\begin{proof}
If $F$ is the coarse $G$-equivariant quasi-isometric embedding from    
$Y$ to $X$ then we need only check this for one quasi-orbit from
each space. Thus on taking $y_0\in Y$, we just consider
${\mathcal Q}(y_0)$ and ${\mathcal Q}(F(y_0))$. Now the former set
is quasi-isometric to to $F{\mathcal Q}(y_0)$ because $F$ is a 
quasi-isometric embedding which is onto when restricted to
${\mathcal Q}(y_0)$. Although $F{\mathcal Q}(y_0)$ need not be equal to
${\mathcal Q}(F(y_0))$, they are
Hausdorff close as $F$ is coarse $G$-equivariant, hence quasi-isometric.
\end{proof}

This gives us the following lemma.
\begin{lem} \label{ccnq}
  Suppose that $\alpha$ is a quasi-action of the group $G$ on
  the metric space $X$ which reduces to the quasi-action $\beta$ on the
  space $Y$.\\
  (i) Quasi-orbits of $\alpha$ are coarse connected (respectively
  quasi-geodesic spaces) if and only if quasi-orbits of $\beta$ are
  coarse connected (respectively quasi-geodesic spaces).\\
  (ii) If the quasi-orbits of $\alpha$ are not quasi-geodesic spaces then
  $\alpha$ cannot be reduced to a cobounded quasi-action on any quasi-geodesic
  space.\\
  (iii) If the quasi-orbits of $\alpha$ are quasi-geodesic spaces then there
  exists a reduction of $\alpha$ to a cobounded isometric action on a
  geodesic metric space.
\end{lem}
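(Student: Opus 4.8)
The plan is to prove the three parts in order, leaning heavily on Proposition~\ref{orgi}, which tells us that reduction preserves the quasi-isometry type of quasi-orbits, together with the machinery of Theorem~\ref{gnact}. For part~(i), the key observation is that Proposition~\ref{orgi} already establishes that any quasi-orbit of $\alpha$ is quasi-isometric to any quasi-orbit of $\beta$. Since both ``coarse connected'' and ``quasi-geodesic'' are properties invariant under quasi-isometry (the former was noted explicitly after the definition of quasi-isometric embedding, and the latter is by definition a quasi-isometry invariant), both directions follow immediately. I would simply spell out that a quasi-isometry between two quasi-orbits carries coarse connectedness and the quasi-geodesic property back and forth.

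For part~(ii), I would argue by contraposition through part~(i). Suppose $\alpha$ did reduce to a cobounded quasi-action $\gamma$ on a quasi-geodesic space $Z$. By part~(i), the quasi-orbits of $\gamma$ would then also be quasi-geodesic spaces. But for a cobounded quasi-action, any quasi-orbit is coarse dense in the ambient space, hence quasi-isometric to that space (as remarked in the introduction); since $Z$ is quasi-geodesic, its quasi-orbits are quasi-geodesic too, which is consistent, so the real content flows the other way: by Proposition~\ref{orgi} the quasi-orbits of $\gamma$ are quasi-isometric to those of $\alpha$, forcing the quasi-orbits of $\alpha$ to be quasi-geodesic, contradicting the hypothesis. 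So no such reduction can exist.

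For part~(iii), the construction is exactly the one sketched in the paragraphs immediately preceding the lemma. First I would invoke Theorem~\ref{gnact}~(i) to replace $\alpha$ by an equivalent genuine isometric action $\beta$ of $G$ on some metric space $Y$; equivalence is a (two-sided) reduction, so this is harmless. Then I would restrict $\beta$ to a single orbit $S=G\cdot y_0$ with the subspace metric, which is a $G$-invariant subset and hence carries an isometric action, giving a reduction to a cobounded action on $S$. By part~(i) (applied to the equivalence $\alpha\sim\beta$ and then to this restriction) the hypothesis that $\alpha$ has quasi-geodesic quasi-orbits transfers to say that $S$, being a quasi-orbit of $\beta$, is a quasi-geodesic space. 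Now I apply the Rips graph construction: for $s$ large enough, $\Gamma_s(S)$ is connected, the natural inclusion $S\hookrightarrow\Gamma_s(S)$ is a quasi-isometry by Theorem~\ref{qugd} (since $S$ is quasi-geodesic), and the $G$-action on $S$ extends to an isometric action on $\Gamma_s(S)$ by graph automorphisms because the condition $d_S(x,y)\le s$ is $G$-invariant. The resulting coarse $G$-equivariant quasi-isometry between $S$ and the geodesic space $\Gamma_s(S)$ completes a reduction of $\alpha$ to a cobounded isometric action on a geodesic space.

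The main obstacle, and the step deserving the most care, is verifying in part~(iii) that coboundedness is genuinely achieved and preserved along the chain $\alpha\rightsquigarrow\beta\rightsquigarrow(\beta\!\restriction\!S)\rightsquigarrow\Gamma_s(S)$. Passing to a single orbit makes the action cobounded on $S$, and the inclusion $S\hookrightarrow\Gamma_s(S)$ being a genuine quasi-isometry (not merely an embedding) is what allows coboundedness to be retained on the geodesic space, via Proposition~\ref{qpres}~(4) read with a quasi-inverse. I would double-check that the extension of the action to $\Gamma_s(S)$ really is by isometries on the whole geodesic graph and not just on vertices, but this follows from the automorphism property together with the path metric, exactly as recorded in Subsection~3.3. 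Everything else is routine constant-chasing of the kind the paper has already declared it will omit.
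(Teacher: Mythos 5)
Your proposal is correct and follows essentially the same route as the paper: part (i) from Proposition \ref{orgi} plus quasi-isometry invariance, part (ii) by noting that a quasi-orbit of a cobounded quasi-action on a quasi-geodesic space is coarse dense in (hence quasi-isometric to) that space and then contradicting (i), and part (iii) via Theorem \ref{gnact} (i), restriction to an orbit, and the Rips graph construction from the start of Subsection 3.3. The only cosmetic difference is that your part (ii) write-up wanders before landing on the contradiction, and the paper gets coboundedness of the final action simply from vertex transitivity of $G$ on $\Gamma_s(G\cdot x_0)$ rather than chasing it through Proposition \ref{qpres} (4).
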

\begin{proof}
Part (i) follows immediately from Proposition \ref{orgi}. For part (ii),
if $\beta$ is a cobounded quasi-action on the quasi-geodesic space 
$Y$ that is a reduction of $\alpha$ then any quasi-orbit of $\beta$ is
quasi-isometric to $Y$, but then any quasi-orbit of $\alpha$ would be too,
which is a contradiction
by part (i). For (iii), assume without loss of generality that $\alpha$ is an
isometric action by Theorem \ref{gnact} (i). As any orbit $G\cdot x_0$ is
a quasi-geodesic space by (i), we can take the Rips orbit graph
$\Gamma_s(G\cdot x_0)$ and argue exactly as was given at the start
of this subsection. The action of $G$ on this connected graph is by
isometries and is vertex transitive, hence cobounded.
\end{proof}

We finish this subsection on reductions by showing that for a given
quasi-action $\alpha$, it is the reductions of $\alpha$  to cobounded
quasi-actions  that are exactly the minimal reductions in a precise sense.
We first note that as a relation, reduction is not quite as
well behaved as equivalence for
quasi-actions because it is possible to have quasi-actions $\alpha,\beta$
where $\alpha$ reduces to $\beta$ and $\beta$ to $\alpha$ but they are
not equivalent (for instance the identity acting on two spaces $X$ and $Y$
which are not quasi-isometric but where $X$ quasi-isometrically embeds
in $Y$ and vice versa). However we do have:
\begin{prop} \label{mnrd}
  Let $\alpha$ be any quasi-action on the space $X$ and suppose that
  $\alpha$ reduces to the quasi-action $\beta$ on $Y$ and $\gamma$
  on $Z$.\\
  (i) If the quasi-action $\beta$ is cobounded then $\gamma$ reduces
  to $\beta$.\\
  (ii) If further the quasi-action $\gamma$ is cobounded then $\beta$
  and $\gamma$ are equivalent quasi-actions.
\end{prop}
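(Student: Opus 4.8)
The plan is to realise both reductions as coarse $G$-equivariant quasi-isometric embeddings, say $F:Y\to X$ coming from ``$\alpha$ reduces to $\beta$'' and $H:Z\to X$ coming from ``$\alpha$ reduces to $\gamma$'', and then to build the reduction asserted in (i) by composing $F$ with a quasi-inverse of $H$. For part (i) I would first observe that, because any two quasi-orbits of a single quasi-action are Hausdorff close, the images $F({\mathcal Q}_\beta(y_0))$ and $H({\mathcal Q}_\gamma(z_0))$ are Hausdorff close inside $X$: coarse $G$-equivariance of $F$ places $F({\mathcal Q}_\beta(y_0))$ within bounded distance of the $\alpha$-quasi-orbit ${\mathcal Q}_\alpha(F(y_0))$, coarse $G$-equivariance of $H$ places $H({\mathcal Q}_\gamma(z_0))$ within bounded distance of ${\mathcal Q}_\alpha(H(z_0))$, and these two $\alpha$-quasi-orbits are themselves Hausdorff close. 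Coboundedness of $\beta$ now enters decisively: since ${\mathcal Q}_\beta(y_0)$ is coarse dense in $Y$ and $F$ is coarsely Lipschitz, the whole image $F(Y)$ lies in a bounded neighbourhood of $F({\mathcal Q}_\beta(y_0))$, and hence in a bounded neighbourhood $N_D(H(Z))$ of $H(Z)$.

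Next I would extend a quasi-inverse of $H$ over this neighbourhood. Writing $r_H:H(Z)\to Z$ for a quasi-inverse of the quasi-isometric embedding $H$ and precomposing with a nearest-point assignment $N_D(H(Z))\to H(Z)$ produces a quasi-isometric embedding $r_H:N_D(H(Z))\to Z$ with $r_HH$ and $Hr_H$ within bounded distance of the respective identities. A routine check, using $x\approx H(r_H(x))$ together with coarse $G$-equivariance of $H$ and the fact that each $\alpha(g,\cdot)$ preserves $N_D(H(Z))$ up to bounded error, shows that $r_H$ is itself coarse $G$-equivariant with respect to $\alpha$ on $X$ and $\gamma$ on $Z$. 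The composite $\Phi:=r_H\circ F:Y\to Z$ is then a coarse $G$-equivariant quasi-isometric embedding, since such maps compose as such, and this is exactly the assertion that $\gamma$ reduces to $\beta$.

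For part (ii) I would show that the very same map $\Phi$ is now coarse onto, and hence a quasi-isometry. When $\gamma$ is also cobounded, ${\mathcal Q}_\gamma(z_0)$ is coarse dense in $Z$, so $H({\mathcal Q}_\gamma(z_0))$ is coarse dense in $H(Z)$; combined with the Hausdorff closeness established in part (i), this forces $F(Y)$ to be coarse dense in $H(Z)$, whence $\Phi(Y)=r_H(F(Y))$ is coarse dense in $Z$. Thus $\Phi$ is a coarse $G$-equivariant quasi-isometry and $\beta,\gamma$ are equivalent quasi-actions. Alternatively one could invoke part (i) symmetrically in both directions, but exhibiting $\Phi$ directly as a quasi-isometry is cleaner and avoids the fact that two-sided quasi-isometric embeddings need not compose to a quasi-isometry in general.

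I expect the main obstacle to be the bookkeeping around the extended quasi-inverse $r_H$: verifying that it remains both a quasi-isometric embedding and coarse $G$-equivariant on the neighbourhood $N_D(H(Z))$ rather than merely on $H(Z)$, in particular that $\alpha$ carries $N_D(H(Z))$ into itself up to uniform bounded error. Everything else reduces to chaining Hausdorff-closeness estimates via Proposition \ref{orgi} and the Hausdorff closeness of quasi-orbits, with coboundedness of $\beta$ (and, in part (ii), of $\gamma$) supplying precisely the coarse-density inputs that control where $F(Y)$ sits relative to $H(Z)$.
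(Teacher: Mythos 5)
Your proof is correct, but it takes a genuinely different route from the paper. The paper first invokes Theorem \ref{gnact} (i) to replace all three quasi-actions by equivalent isometric actions, then proves an auxiliary lemma (Lemma \ref{orbq}) producing coarse $G$-equivariant quasi-isometries between orbits --- this requires an ad hoc fix (a fixed arbitrary choice of $h_g$ among $\{h : hx_1 = gx_1\}$) to make the map $gx_1 \mapsto gx_2$ well defined --- and finally uses coboundedness of $\beta$ to identify $Y$ with $Orb(y_0)$ and composes through $Orb(x_0)$ into $Orb(z_0)\subseteq Z$. You instead stay entirely with the given quasi-actions and the two embeddings $F:Y\to X$, $H:Z\to X$: the Hausdorff-closeness of quasi-orbits plus coboundedness of $\beta$ forces $F(Y)$ into a bounded neighbourhood of $H(Z)$, and you then extend a quasi-inverse of $H$ over that neighbourhood and compose. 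Your route avoids both the appeal to Kleiner--Leeb and the well-definedness trick, and it isolates exactly where coboundedness of $\beta$ is used (to place $F(Y)$ near $H(Z)$) and of $\gamma$ (to make the composite coarse onto in part (ii)); the cost is the bookkeeping you flag around the extended quasi-inverse, in particular checking that $\alpha(g,\cdot)$ carries $N_D(H(Z))$ into a uniformly bounded neighbourhood of $H(Z)$ (which follows from the uniform quasi-action constants together with coarse equivariance of $H$). The paper's route, by contrast, yields the reusable orbit-to-orbit Lemma \ref{orbq} and reduces everything to statements about genuine isometric actions. Both arguments are sound.
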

\begin{proof}
  By Theorem \ref{gnact} (i), whenever we have a quasi-action we can replace
  it with an equivalent action, so we can assume throughout the proof
  that we are dealing with isometric actions (and use the standard action
  notation). This allows us to restrict actions to an orbit.
  We first require a lemma that is a coarse equivariant version of
  Proposition \ref{orgi}.
\begin{lem} \label{orbq}
  (i) Given an isometric action of $G$ on some space $X$ and
  $x_1,x_2\in X$, there is a coarse $G$-equivariant quasi-isometry $\pi$ from
$Orb(x_1)$ to $Orb(x_2)$ (where both are equipped with the subspace metrics).\\
  (ii) Suppose that this action on $X$ reduces to
some action on a space $Y$ via the coarse $G$-equivariant
  quasi-isometric embedding $F:Y\rightarrow X$.
  Then for any $y_0\in Y$ and $x_0\in X$, there is a coarse $G$-equivariant
  quasi-isometry $\overline{F}:Orb(y_0)\rightarrow Orb(x_0)$.
\end{lem}
\begin{proof}
  We give the definitions of the maps used and leave the straightforward
  details to be checked. In (i) we are in a situation, which will again
  occur, that we would obviously like to take $g\in G$
  and let $\pi$ send $gx_1$ to $gx_2$ but this might not be
  well defined as we could have $gx_1=g'x_1$ but
  $gx_2\neq g'x_2$. In these situations there are various fixes: one is to
  replace the action of $G$ on the space $X$ with
  the action on $X\times G$ given by letting $g$ send $(x,h)$ to
  $(gx,gh)$. This is now a free action and we can put the discrete metric
  on $G$ (or even turn it into a complete graph).

A more ad hoc approach, which we will adopt here because of its flexibility,
is to try and stick to the above formula but to make an arbitrary
  choice when required. Thus we define $\pi$ as mapping $gx_1$ to $h_gx_2$
  where $h_g$ is some fixed choice amongst the elements
  $\{h\in G\,|\,hx_1=gx_1\}$.
Then $\pi$ is coarse dense because on taking a point $gx_2$ in $Orb(x_2)$
we have   $d_X(\pi(gx_1),gx_2)=d_X(h_gx_2,gx_2)$
  which is at most
  \[  d_X(h_gx_2,h_gx_1)+d_X(h_gx_1,gx_1)+d_X(gx_1,gx_2)\]
  and this is at most $2d_X(x_1,x_2)$ as $h_gx_1=gx_1$. Now $\pi$ being
  $G$-equivariant and a quasi-isometric embedding are checked similarly.

  For (ii), first set $x_0$ to be $Fy_0$. Although  
  $F$ itself does not map $Orb(y_0)$ to $Orb(Fy_0)$,
  being coarse $G$-equivariant means that we have $d_X(F(gy_0),gF(y_0))$
  $\leq M$.
  Thus on running the same trick as before to get a well defined map,
  we obtain $\overline{F}:Orb(y_0)\rightarrow Orb(Fy_0)$ which is defined
  as $\overline{F}(gy_0)=h_g(Fy_0)$ where $h_g$ is some element of $G$ with
  $h_gy_0=gy_0$. Then similar estimates as for (i) show that $\overline{F}$
  is coarse onto, a quasi-isometric embedding and coarse $G$-equivariant.

  That was for $x_0=Fy_0$ but now we can apply Part (i) to go from
  $Orb(Fy_0)$ to $Orb(x_0)$.
  \end{proof}
As for the proof of Proposition \ref{mnrd} (i), for any $y_0\in Y$ the 
orbit $Orb(y_0)$ is dense in $Y$ so inclusion of $Orb(y_0)$ in $Y$
is a $G$-equivariant isometric embedding from the action of $G$
on $Orb(y_0)$ to the action of $G$ on $Y$ with coarse dense image.
Thus these actions are equivalent
and so we now need a coarse $G$-equivariant quasi-isometric embedding of
$Orb(y_0)$ in $Z$. 
But by Lemma \ref{orbq} (ii), on taking any $x_0\in X$ and $z_0\in Z$
we have coarse $G$-equivariant
quasi-isometries from $Orb(y_0)$ to $Orb(x_0)$ and from $Orb(z_0)$ to
to $Orb(x_0)$, thus also from $Orb(x_0)$ to $Orb(z_0)$ and thence
from $Orb(y_0)$ to $Orb(z_0)$ by composition, with $Orb(z_0)$ lying in $Z$.

For (ii), as $\gamma$ is now cobounded we can replace $Z$ with
$Orb(z_0)$, just as we did for $Y$ and $Orb(y_0)$. Now
part (i) tells us that we have a coarse $G$-equivariant quasi-isometry from
$Y$ to $Orb(z_0)$, and thus one also from $Y$ to $Z$ because $Orb(z_0)$
is coarse dense in $Z$.
\end{proof}

\subsection{Good and bad orbits}

The previous subsection told us that given any quasi-action on an arbitrary
metric space $X$,
we have a canonical reduction of this quasi-action up to equivalence
and that it is exactly the reductions to cobounded quasi-actions which
make up this equivalence class. This means that it is the geometric
behaviour of a quasi-orbit that will determine whether these reductions
result in well behaved spaces, as opposed to the geometric behaviour of
the original space $X$. Our hope is that 
the quasi-orbits of a quasi-action are quasi-geodesic
spaces, whereupon we have seen that we can reduce the quasi-action
to a cobounded isometric action on a geodesic space.
As being a quasi-geodesic space implies coarse connectedness, we
introduce the following definition.
\begin{defn} \label{gdbd}
A quasi-action of a group on a metric space has
{\bf good quasi-orbits} if one (equivalently all) quasi-orbit(s) is (are)
quasi-geodesic spaces. It has
{\bf bad quasi-orbits} if one (equivalently all) quasi-orbit(s) is (are) not
  coarse connected.
\end{defn}
Examples of isometric actions on hyperbolic spaces whose quasi-orbits are
neither good nor bad are given in Section 5. However we will then see that
for spaces isometric to a simplicial tree, the quasi-orbits of any
quasi-action are either good or bad.

We finish this subsection with two results that are known for isometric
actions (for instance, see \cite{dcdlh}) and where the standard proofs
immediately extend to quasi-actions by taking care of the extra constants.
\begin{prop} \label{fgn}
  Let an arbitrary group $G$ have some $(K,\epsilon,C)$
  quasi-action $\alpha$
  on any metric space $(X,d_X)$.\\
(i) If $G$ is finitely generated then any quasi-orbit
${\mathcal Q}(x_0)$ is $r$-coarse connected with the subspace metric
from $X$. (Here $r$ depends on $x_0$ and the finite generating set used.)\\
(ii) Now suppose that the quasi-action $\alpha$ is metrically proper.
Then the quasi-orbits are coarse connected if and only if $G$ is finitely
generated.
\end{prop}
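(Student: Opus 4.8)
The plan is to treat this as a quasi-action version of the \v{S}varc--Milnor argument, proving each implication by a telescoping/coarse-path estimate and then invoking metric properness only for the converse in part (ii). Throughout, fix the $(K,\epsilon,C)$ constants of $\alpha$ and a basepoint $x_0$; note that $\alpha(\mathrm{id},x_0)$ lies in $\mathcal{Q}(x_0)$ and is within $C$ of $x_0$ by axiom (2), so I may freely use it as a substitute basepoint inside the quasi-orbit.

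For part (i), I would pick a finite symmetric generating set $S$ for $G$ and set $r_0=\max_{s\in S}d_X(\alpha(s,x_0),x_0)$. The key estimate is that multiplying by a single generator moves the corresponding quasi-orbit point a bounded amount: for any $g\in G$ and $s\in S$, axiom (3) gives $d_X(\alpha(gs,x_0),\alpha(g,\alpha(s,x_0)))\leq C$, while $A_g$ being a $(K,\epsilon)$ quasi-isometric embedding gives $d_X(\alpha(g,\alpha(s,x_0)),\alpha(g,x_0))\leq K d_X(\alpha(s,x_0),x_0)+\epsilon\leq Kr_0+\epsilon$. Combining, $d_X(\alpha(gs,x_0),\alpha(g,x_0))\leq r$ where $r:=Kr_0+\epsilon+C$. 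Writing any $g$ as a word $s_1\cdots s_n$ in $S$, the partial products give an $r$-coarse path $\alpha(\mathrm{id},x_0),\alpha(s_1,x_0),\dots,\alpha(g,x_0)$ inside $\mathcal{Q}(x_0)$. Since the relation ``there is an $r$-coarse path'' is an equivalence relation, any two quasi-orbit points are $r$-coarse connected through $\alpha(\mathrm{id},x_0)$, proving $\mathcal{Q}(x_0)$ is $r$-coarse connected in the subspace metric.

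For part (ii), the backward direction is exactly part (i) and needs no properness. For the forward direction, assume $\mathcal{Q}(x_0)$ is $r$-coarse connected and $\alpha$ is metrically proper. Given $g\in G$, take an $r$-coarse path in the subspace $\mathcal{Q}(x_0)$ from $\alpha(\mathrm{id},x_0)$ to $\alpha(g,x_0)$; since the metric is the subspace metric, every intermediate point is of the form $\alpha(g_j,x_0)$ with $g_0=\mathrm{id}$, $g_n=g$, and $d_X(\alpha(g_{j-1},x_0),\alpha(g_j,x_0))\leq r$. Set $t_j:=g_{j-1}^{-1}g_j$, so that $g=t_1t_2\cdots t_n$. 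The crux is to bound $d_X(\alpha(t_j,x_0),x_0)$ uniformly in terms of $r$. Applying the lower quasi-isometry bound of $A_{g_{j-1}}$ to the pair $\alpha(t_j,x_0),x_0$ gives $\tfrac{1}{K}d_X(\alpha(t_j,x_0),x_0)-\epsilon\leq d_X(\alpha(g_{j-1},\alpha(t_j,x_0)),\alpha(g_{j-1},x_0))$; by axiom (3) the point $\alpha(g_{j-1},\alpha(t_j,x_0))$ is within $C$ of $\alpha(g_{j-1}t_j,x_0)=\alpha(g_j,x_0)$, which is within $r$ of $\alpha(g_{j-1},x_0)$. Hence the right-hand side is at most $r+C$, yielding $d_X(\alpha(t_j,x_0),x_0)\leq K(r+C+\epsilon)=:R$.

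Each $t_j$ therefore lies in the set $B_R=\{h\in G : d_X(\alpha(h,x_0),x_0)\leq R\}$, which is finite by metric properness (Definition \ref{dfmp}). Since $g=t_1\cdots t_n$ was an arbitrary element of $G$, the finite set $B_R$ generates $G$, so $G$ is finitely generated. The main obstacle I anticipate is purely bookkeeping: getting a clean uniform bound on $d_X(\alpha(g_{j-1}^{-1}g_j,x_0),x_0)$. The temptation is to compose $A_{g_{j-1}^{-1}}$ with $A_{g_{j-1}}$ and track how far this differs from the identity, which forces one to quantify the quasi-inverse relation hidden in the homomorphism $g\mapsto[A_g]$; using instead the lower bound of a single $A_{g_{j-1}}$ together with axiom (3) sidesteps that and keeps the constants transparent, which is why I would organise the estimate in the direction above.
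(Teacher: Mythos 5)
Your proof is correct, and it is precisely the standard \v{S}varc--Milnor-type argument that the paper itself invokes: the paper states Proposition \ref{fgn} without proof, remarking only that "the standard proofs immediately extend to quasi-actions by taking care of the extra constants," which is exactly what you have done. Both the telescoping estimate in (i) and the use of the lower quasi-isometry bound of $A_{g_{j-1}}$ together with axiom (3) to bound $d_X(\alpha(g_{j-1}^{-1}g_j,x_0),x_0)$ in (ii) are carried out with the constants tracked correctly.
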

{\bf Example}: The free group $F(x_1,x_2,\ldots)$ of countably
infinite rank acts on its own Cayley graph with respect
to the generating set $x_1,x_2,\ldots $, which is a locally infinite tree
and the orbits are coarse connected, indeed are quasi-geodesic spaces.
But this group is a subgroup of
$F_2$, so it also acts metrically properly on the regular tree of degree 4
with orbits that are not coarse connected, by Proposition \ref{fgn} (ii).

\section{Quasi-Proper metric spaces}

We have seen how we might go about reducing a quasi-action $\alpha$ of
$G$ on $X$
to something which is better behaved by using Rips graphs
but we have not yet said anything about whether this preserves any finiteness
properties of $X$. For instance if $X$ is a proper metric space which is
also geodesic or quasi-geodesic then we
might hope that we can find $S\subseteq X$ and $s>0$ such that $\Gamma_s(S)$
is a locally finite graph with a quasi-action of $G$ which is
equivalent to (or a reduction of)
$\alpha$. This will not work if we proceed naively, for
instance if we take $S=X$ for $X$ a geodesic space then $\Gamma_s(X)$
will have all vertices with uncountable valence. However if we choose
carefully then this works. We first introduce
some definitions and results from \cite{dcdlh} Section 3.D. Here and
throughout $B(x,r)$
denotes the closed ball of radius $r\geq 0$ about $x$ in a metric space.

\begin{defn} 
A metric space $X$ is {\bf coarsely proper} if there exists $R_0\geq 0$
such that every ball $B(x,r)$ in $X$ can be covered by finitely many
balls of radius $R_0$.\\
A metric space $X$ is {\bf uniformly coarsely proper} if there exists
$R_0\geq 0$ such that for every $R\geq 0$, there exists an integer $N_r$ where
every ball $B(x,r)$ can be covered by at most $N_r$ balls of radius $R_0$.
\end{defn}
Proposition 3.D.11 (respectively Proposition 3.D.16) of \cite{dcdlh} shows
that being coarsely proper (respectively uniformly coarsely proper) 
is a quasi-isometry invariant for metric
spaces. It also shows that if $X$ is a quasi-geodesic space then this
condition is the same as being quasi-isometric to a connected graph
that is locally finite (respectively of bounded valence). Moreover
\cite{dcdlh} Proposition 3.D.13 (1) states that any proper metric
space is coarsely proper. (However Example \ref{root} gives many examples of
locally finite simplicial trees, hence proper geodesic metric spaces,
which are not quasi-isometric to any bounded valence graph, hence are
not uniformly coarsely proper.)

\begin{thm} \label{prpr}
  If $X$ is a quasi-geodesic metric space that is proper, or even
  quasi-isometric to some proper metric space, then there exists a coarse
dense countable subset $S$ of $X$
and $s>0$ such that the Rips graph $\Gamma_s(S)$ is
connected, is locally finite and the map from $\Gamma_s(S)$ to $X$ given
by sending any point to a nearest vertex and then to that point in
$S\subseteq X$ is a quasi-isometry.
\end{thm}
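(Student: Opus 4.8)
The plan is to manufacture $S$ as a maximal separated net in $X$ and to extract all three conclusions from the single hypothesis that $X$ is coarsely proper. First I would record that $X$ is coarsely proper: a proper metric space is coarsely proper by \cite{dcdlh} Proposition 3.D.13 (1), and since coarse properness is a quasi-isometry invariant by \cite{dcdlh} Proposition 3.D.11, the same holds for any space that is merely quasi-isometric to a proper one. So fix $R_0\geq 0$ such that every ball of $X$ can be covered by finitely many balls of radius $R_0$.

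Next I would choose the net. Fix $\delta>2R_0$ and use Zorn's lemma to take a maximal $\delta$-separated subset $S\subseteq X$, meaning $d_X(p,q)\geq\delta$ for distinct $p,q\in S$. Maximality forces $S$ to be $\delta$-coarse dense: a point at distance $\geq\delta$ from all of $S$ could be adjoined, contradicting maximality. The key combinatorial fact is that $S$ meets any ball in only finitely many points. Indeed $B(x,r)$ is covered by finitely many balls of radius $R_0$, and because $2R_0<\delta$ each such $R_0$-ball (of diameter $\leq 2R_0$) contains at most one point of the $\delta$-separated set $S$, so $|S\cap B(x,r)|$ is at most the number of covering balls. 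Applying this with the exhaustion $X=\bigcup_n B(x_0,n)$ about a basepoint shows $S$ is a countable union of finite sets, hence countable; applying it with $r=s$ shows that every vertex $v$ of $\Gamma_s(S)$, which is joined precisely to the points of $S\cap B(v,s)\setminus\{v\}$, has finite valence, and this for every $s$. This last point is worth stressing: local finiteness of $\Gamma_s(S)$ holds for all $s$ and does not degrade as $s$ grows.

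It remains to pick $s$ and identify the geometry. Because the inclusion $S\hookrightarrow X$ is a coarse-dense isometric embedding for the subspace metric, it is a quasi-isometry, so $S$ is quasi-isometric to $X$; as being a quasi-geodesic space is a quasi-isometry invariant and $X$ is quasi-geodesic by hypothesis, $S$ is itself a quasi-geodesic space. Theorem \ref{qugd} now supplies an $s$, indeed all sufficiently large $s$, for which $\Gamma_s(S)$ is connected and the natural inclusion $S\hookrightarrow\Gamma_s(S)$ is a quasi-isometry; its quasi-inverse is exactly the nearest-vertex map $g:\Gamma_s(S)\to S$ described before Theorem \ref{qugd}. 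Enlarging $s$ if necessary to exceed the coarse-connectedness constant of $S$ does no harm by the previous paragraph. Composing $g$ with the inclusion $S\hookrightarrow X$ yields the map named in the statement, and it is a quasi-isometry as a composite of two quasi-isometries.

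I would expect the only real care to be needed in the second paragraph: coordinating the separation constant $\delta$ with the coarse-properness radius $R_0$ by taking $\delta>2R_0$, so that two net points cannot share an $R_0$-ball, is precisely what simultaneously delivers countability and uniform local finiteness. One must then check that the later enlargement of $s$ demanded by Theorem \ref{qugd} leaves these finiteness conclusions intact, which it does since the bound on $|S\cap B(v,s)|$ is finite for each fixed $s$.
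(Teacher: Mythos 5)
Your proposal is correct and follows essentially the same route as the paper, which also rests on coarse properness being a quasi-isometry invariant inherited from a proper space and on a maximal $\epsilon$-separated net obtained by Zorn's lemma, with Theorem \ref{qugd} supplying the connectedness and the quasi-isometry for large $s$. If anything your version is slightly tidier in that you run the net construction directly inside $X$ (using only its coarse properness) rather than first replacing $X$ by a proper space, so the set $S$ you produce literally sits in the original $X$ as the statement demands.
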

\begin{proof}
  If $X$ is not proper then we can replace it by a new space
  (quasi-isometric to $X$ and here also called $X$) which is proper and 
which will still be quasi-geodesic. The results quoted above then tell us
that $X$ is also coarsely proper and so is quasi-isometric to a locally
finite connected graph. (The proof proceeds by
taking an $\epsilon$-net of $X$,
  that is a subset $S$ where $d_X(a,b)\geq\epsilon$ for all distinct
  $a,b\in S$ and applying Zorn's lemma: see also \cite{bh}
  Proposition I.8.45.)
\end{proof}  
Note that these two conditions on $X$ are necessary for the conclusion to
hold since the locally finite connected graph $\Gamma_s(S)$ will be a proper
geodesic metric space quasi-isometric to $X$. However as mentioned above,
we cannot replace locally finite with bounded valence in the conclusion.

But what happens in this theorem if 
there is a quasi-action or isometric action on $X$?
In the case of isometric
actions on a metric space $X$, on taking $S$ as in the statement of
Theorem \ref{prpr} we can only really get a natural isometric action on
$\Gamma_s(S)$ if $S$ is $G$-invariant. But this might well not be the
case, for instance if $G$ is acting with all orbits indiscrete on $X$ then
any attempt to find a $\epsilon$-net fails as soon as it contains
one $G$-orbit. However we do not have this problem for quasi-actions,
because we are able to apply Example \ref{qunew}. Moreover, as we saw
in the previous section, there is much to be gained from considering what
happens to quasi-orbits ${\mathcal Q}(x_0)$ rather than the whole space
$X$. Thus we might wonder whether $X$ or ${\mathcal Q}(x_0)$ is quasi-isometric
to some proper metric space. To this end we have:
\begin{lem} \label{qiepr}
A metric space $Z$ that quasi-isometrically embeds in some proper metric space
$P$ is itself quasi-isometric to a proper metric space.
\end{lem}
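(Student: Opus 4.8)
The plan is to reduce everything to a closure argument inside $P$. First I would observe that since $q\colon Z\to P$ is a quasi-isometric embedding, it is automatically a quasi-isometry onto its image $S:=q(Z)$ (it is trivially coarse onto $S$ with constant $0$). Hence $Z$ is quasi-isometric to $S$ equipped with the subspace metric inherited from $P$, and it suffices to produce a proper metric space quasi-isometric to $S$.

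The one point that genuinely requires care is that a subspace of a proper metric space need not be proper: the closed ball $B_S(x,r)=B_P(x,r)\cap S$ is contained in a compact set but may fail to be closed when $S$ is not closed in $P$. To repair this I would pass to the closure $\overline{S}$ of $S$ in $P$, again with the subspace metric, and check that $\overline{S}$ is proper. For $x\in\overline{S}$ the closed ball $B_{\overline{S}}(x,r)$ equals $B_P(x,r)\cap\overline{S}$, which is the intersection of the closed ball $B_P(x,r)$ (compact, as $P$ is proper) with the closed set $\overline{S}$; being a closed subset of a compact set, it is compact.

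It then remains to compare $S$ with $\overline{S}$. Since $S$ is dense in $\overline{S}$ by construction, every point of $\overline{S}$ lies within distance $1$ of a point of $S$, so $S$ is coarse dense in $\overline{S}$. The inclusion $S\hookrightarrow\overline{S}$ is an isometric embedding that is coarse onto, hence a $(1,0,1)$-quasi-isometry. Composing the two comparisons shows that $Z$ is quasi-isometric to $S$, which is quasi-isometric to the proper space $\overline{S}$, as required.

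I do not expect a serious obstacle here: the only place where one could slip is in assuming that a subset of a proper space is proper, and passing to the closure is precisely what fixes this. Everything else is routine manipulation of the definitions of quasi-isometric embedding, of closed balls in the subspace metric, and of denseness.
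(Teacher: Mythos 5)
Your argument is correct and is essentially identical to the paper's proof: pass to the image $S=q(Z)$ with the subspace metric, then to its closure $\overline{S}$ in $P$, noting that a closed subset of a proper space is proper and that $S$ is coarse dense in $\overline{S}$. No differences worth remarking on.
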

\begin{proof}
  Our space $Z$ is of course quasi-isometric to its image $S$ (with the
  subspace metric) in $P$ under this embedding. Now a closed subspace of a
  proper metric space is also proper and $S$ is coarse dense in its
  closure $\overline{S}$ in $P$, thus $S$ is quasi-isometric to the proper
  space $\overline{S}$.
\end{proof}
This means that if our space $X$ is proper, or merely if it quasi-isometrically
embeds in a proper metric space then any non empty subset of $X$, and in
particular any quasi-orbit ${\mathcal Q}(x_0)$
of any quasi-action on $X$ is quasi-isometric to some proper metric space.
On the other hand we could easily have proper (quasi-)orbits inside spaces
that are much to big to be proper, or even to be quasi-isometric to something
that is proper, so we now proceed using orbits.
\begin{co} \label{prac}
If $\alpha$ is any quasi-action of any group $G$  
on an arbitrary metric space $X$ such that quasi-orbits ${\mathcal Q}(x_0)$
are quasi-geodesic spaces and also
quasi-isometrically embed in some proper metric space then
there exists a countable subset $S$ and $s>0$ such that the Rips graph
$\Gamma_s(S)$ is
connected, locally finite and quasi-isometric to ${\mathcal Q}(x_0)$,
along with a cobounded quasi-action $\beta$ on $\Gamma_s(S)$
such that the map from $\Gamma_s(S)$ to $X$ given
by sending any point to a nearest vertex and then to that point in
$S\subseteq X$
is a reduction of $\alpha$ to $\beta$.

If $\alpha$ is a cobounded quasi-action then this subset $S$ is coarse dense
in $X$ and $\beta$ is equivalent to $\alpha$.
\end{co}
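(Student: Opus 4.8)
The plan is to apply Theorem \ref{prpr} not to $X$ itself but to a single quasi-orbit, and then to transport $\alpha$ across the resulting quasi-isometry in the manner of Example \ref{qunew}. First I would fix a basepoint $x_0$ and equip the quasi-orbit ${\mathcal Q}(x_0)$ with the subspace metric inherited from $X$. By hypothesis ${\mathcal Q}(x_0)$ is a quasi-geodesic space which quasi-isometrically embeds in a proper metric space, so Lemma \ref{qiepr} shows it is quasi-isometric to a proper metric space. Thus ${\mathcal Q}(x_0)$ satisfies the hypotheses of Theorem \ref{prpr}, which supplies a countable coarse dense subset $S\subseteq{\mathcal Q}(x_0)\subseteq X$ and $s>0$ such that $\Gamma_s(S)$ is connected, locally finite, and the nearest-vertex map $q:\Gamma_s(S)\rightarrow{\mathcal Q}(x_0)$ (sending a point to its nearest vertex, regarded as a point of $S$) is a quasi-isometry. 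In particular $\Gamma_s(S)$ is quasi-isometric to ${\mathcal Q}(x_0)$, and composing $q$ with the isometric inclusion ${\mathcal Q}(x_0)\hookrightarrow X$ yields exactly the map $\iota:\Gamma_s(S)\rightarrow X$ named in the statement, which is a quasi-isometric embedding.

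It then remains to produce the quasi-action $\beta$ and to verify that $\iota$ is coarse $G$-equivariant. The one subtlety is that $\alpha$ does not genuinely restrict to ${\mathcal Q}(x_0)$: for $v\in{\mathcal Q}(x_0)$ the point $\alpha(g,v)$ need only lie within the constant $C$ of ${\mathcal Q}(x_0)$, since $\alpha(g,\alpha(h,x_0))$ is within $C$ of $\alpha(gh,x_0)$. I would deal with this exactly as in the proof of Lemma \ref{orbq}, by fixing a choice of nearby point: define $\alpha'(g,v)$ to be a chosen point of ${\mathcal Q}(x_0)$ within $C$ of $\alpha(g,v)$. Tracking constants shows that $\alpha'$ is a genuine quasi-action on ${\mathcal Q}(x_0)$ and that the inclusion ${\mathcal Q}(x_0)\hookrightarrow X$ is coarse $G$-equivariant, so that $\alpha$ reduces to $\alpha'$. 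Transferring $\alpha'$ across the quasi-isometry $q$ by the recipe of Example \ref{qunew} then defines $\beta$ on $\Gamma_s(S)$ (which, as in Example \ref{ex2}(3), may be taken to send vertices to vertices), with $q$ a coarse $G$-equivariant quasi-isometry realising an equivalence of $\alpha'$ and $\beta$. As reduction is transitive and the composite embedding is precisely $\iota$, this exhibits $\iota$ as a reduction of $\alpha$ to $\beta$.

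For coboundedness I would observe that a quasi-orbit of $\alpha'$ is Hausdorff close to, and hence coarse dense in, the space ${\mathcal Q}(x_0)$ itself, so $\alpha'$ is cobounded; since $\beta$ is equivalent to $\alpha'$, applying Proposition \ref{qpres}(4) through a quasi-inverse gives that $\beta$ is cobounded too. Finally, for the cobounded case, if $\alpha$ is cobounded then ${\mathcal Q}(x_0)$ is coarse dense in $X$; composing coarse-density constants, $S$ is then coarse dense in $X$, so the image of $\iota$ is coarse dense and $\iota$ is a genuine quasi-isometry rather than merely a quasi-isometric embedding. Hence $\iota$ becomes a coarse $G$-equivariant quasi-isometry and $\alpha$ and $\beta$ are equivalent.

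I expect the main obstacle to be the bookkeeping in showing $\beta$ is a bona fide quasi-action: because $\alpha$ only maps ${\mathcal Q}(x_0)$ to within bounded distance of itself, one must pass through the choice function $\alpha'$ and verify the three quasi-action axioms together with coarse $G$-equivariance of $\iota$, carrying the constants $K,\epsilon,C$ and the coarse-density constant of $S$ through each estimate. Everything else is a matter of quoting Theorem \ref{prpr}, Lemma \ref{qiepr}, and transitivity of reduction.
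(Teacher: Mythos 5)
Your proof is correct, and its overall shape matches the paper's: apply Theorem \ref{prpr} to a (quasi-)orbit and transport the quasi-action across the resulting quasi-isometry via Example \ref{qunew}. The one genuine divergence is in how you arrange for a cobounded quasi-action on the orbit in the first place. The paper invokes Theorem \ref{gnact} (i) to replace $\alpha$ at the outset by an equivalent genuine isometric action, so that restriction to an honest $G$-invariant orbit $O=Orb(x_0)$ is immediate and literally transitive; it then applies Theorem \ref{prpr} to $O$. You instead stay inside the original space $X$ and confront head-on the fact that $\alpha$ only maps ${\mathcal Q}(x_0)$ to within the constant $C$ of itself, repairing this with a choice function $\alpha'$ in the spirit of Lemma \ref{orbq}. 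Your verification sketch is sound: each corrected map $A'_g$ differs from $A_g$ by at most $C$ pointwise so remains a uniform quasi-isometric embedding, the coarse-onto property follows by applying $A'_g$ to $\alpha(g^{-1}k,x_0)$, axiom (3) accumulates only a bounded error $(K+3)C+\epsilon$, and the quasi-orbit of $x_0$ under $\alpha'$ is $C$-Hausdorff close to all of ${\mathcal Q}(x_0)$, giving coboundedness. What your route buys is that $S$ and the map $\iota$ live literally inside the original $X$ as the statement describes, and you avoid quoting Kleiner--Leeb; what it costs is exactly the constant-chasing you flag, which the paper's WLOG sidesteps entirely. The cobounded case is handled identically in both arguments.
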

\begin{proof}
  Without loss of generality by Theorem \ref{gnact} (i), we assume that
$\alpha$ is an isometric action on $X$ whose orbits have the same two
properties as our quasi-orbits ${\mathcal Q}(x_0)$. We then perform the
standard reduction of $\alpha$ to an isometric action on $O:=Orb(x_0)$ by
restriction, thus we have a cobounded (transitive) action. Now for badly
behaved orbits $O$ the Rips graph $\Gamma_s(O)$ need not be locally finite
(indeed $O$ need not even be a countable set). However $O$ does satisfy the
  conditions of Theorem \ref{prpr} by Lemma \ref{qiepr}, so we can
  apply this theorem to $O$. We obtain a coarse dense countable
  subset $S$ of $O$, along with $s>0$ which ensures that the map from
the locally finite (and connected) graph
  $\Gamma_s(S)$ to $O$ mentioned in the statement is a quasi-isometry,
  which we will call $r$, and
  thus a quasi-isometric embedding from $\Gamma_s(S)$ to $X$. Note that
  although we have an isometric action of $G$ on $O$ given by restriction
  of $\alpha$, we are not saying that $S$ is $G$-invariant and so we have
  no natural action on $\Gamma_s(S)$. Nevertheless 
  we take a quasi-inverse of $r$ which we call $q$, with $q$ mapping
  from $O$ to $\Gamma_s(S)$, so we can
now apply Example \ref{ex2} (3) to get our quasi-action $\beta$
on $\Gamma_s(S)$ which is equivalent to the restriction action of $\alpha$
on $O$, thus $\beta$ is cobounded too
and is also a reduction of our original quasi-action.

If this original quasi-action were cobounded then this is preserved throughout,
so $O$ will be coarse dense in $X$ and thus $S$ is too. Also our
quasi-isometric embedding of $\Gamma_s(S)$ to $X$ is a quasi-isometry
in this case, thus
this quasi-action $\beta$ is equivalent to the original quasi-action.
\end{proof}

We now have our quasi-action $\beta$ on the locally finite graph $\Gamma_s(S)$
and we can assume it sends vertices to vertices, as mentioned in Example
\ref{ex2} (3). However elements of $G$ will not give rise to automorphisms
of $\Gamma_s(S)$ under $\beta$ in general, because a vertex can be mapped
to another vertex with a completely different valence. This is somewhat
unfortunate because if the subset $S$ so obtained in the above proof
were equal to the orbit $O$ then (as
mentioned in subsection 3.3) we would get a genuine action of $G$ on this
Rips graph by
automorphisms, hence isometries, and this action would be vertex transitive,
implying that our Rips graph is not just locally finite but even of bounded
valence. However having a cobounded quasi-action on a locally
finite graph certainly does not imply that the graph has bounded valence.
For instance, take the regular tree $T_4$ and add an arbitrary
number of extra
edges at each vertex to create a tree $T$ in which $T_4$ is coarse dense,
hence they are quasi-isometric. Now turn the standard action of the
free group $F_2$ on $T_4$, which is cobounded, into an equivalent quasi-action
on $T$ using Example \ref{qunew}.

However we would be happy replacing our quasi-action on a locally finite
graph with an equivalent quasi-action on a bounded valence graph (so going
back from $T$ to $T_4$ in the above example). Whilst we know that
we cannot do this in general, we can if the quasi-action is cobounded.
\begin{prop} \label{blls}
  If $X$ is a metric space which is coarsely proper and admits
  some quasi-action of a group $G$ which is cobounded then $X$ is
  uniformly coarsely proper.
\end{prop}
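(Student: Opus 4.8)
The plan is to use coboundedness together with the uniformity of the quasi-isometry constants to transport a single covering computation at the fixed basepoint out to every ball in $X$. Fix a $(K,\epsilon,C)$ quasi-action $\alpha$ of $G$ on $X$ and a basepoint $x_0$ whose quasi-orbit is $c$-coarse dense, and let $R_0$ be the radius witnessing coarse properness. Since $x_0$ is a single fixed point, for each radius $\rho$ the ball $B(x_0,\rho)$ can be covered by some finite number $M_\rho$ of balls of radius $R_0$; the key point throughout will be that this number depends only on $\rho$ and not on any moving data.

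Given an arbitrary $x\in X$ and $r\geq 0$, I would first use coboundedness to pick $g\in G$ with $d_X(x,\alpha(g,x_0))\leq c$, so that $B(x,r)\subseteq B(y,r+c)$ where $y=\alpha(g,x_0)=A_g(x_0)$. Next I would pull this ball back along the quasi-inverse. Axioms (2) and (3) give $d_X(A_{g^{-1}}(A_g(x_0)),x_0)\leq 2C$, and $A_{g^{-1}}$ is a $(K,\epsilon)$ quasi-isometric embedding, so $A_{g^{-1}}$ carries $B(y,r+c)$ into the ball $B(x_0,\rho)$ with $\rho=K(r+c)+\epsilon+2C$, a radius depending only on $r$. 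Covering $B(x_0,\rho)$ by $M_\rho$ balls $B(z_i,R_0)$ and then pushing forward by $A_g$, each point $p\in B(y,r+c)$ satisfies $A_{g^{-1}}(p)\in B(z_i,R_0)$ for some $i$; applying $A_g$ and using $d_X(A_g(A_{g^{-1}}(p)),p)\leq 2C$ gives $d_X(p,A_g(z_i))\leq KR_0+\epsilon+2C$. Hence $B(x,r)\subseteq B(y,r+c)$ is covered by the $M_\rho$ balls $B(A_g(z_i),R_0')$ of the single new radius $R_0':=KR_0+\epsilon+2C$.

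Since $R_0'$ is built only from the fixed constants $K,\epsilon,C,R_0$ and $M_\rho$ depends only on $r$ through $\rho$, this exhibits $X$ as uniformly coarsely proper with witnessing radius $R_0'$ and $N_r=M_\rho$. The only real obstacle is bookkeeping: one must check that every constant introduced ($c$, $\rho$, $R_0'$, $M_\rho$) is independent of $x$ and of the choice of $g$. This is guaranteed precisely because the quasi-action has uniform $(K,\epsilon,C)$ constants for all $g$ and because the covering count $M_\rho$ is taken at the single fixed point $x_0$; it is the fixedness of $x_0$ that converts an a priori $x$-dependent covering number into one depending only on $r$.
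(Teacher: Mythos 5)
Your proof is correct and follows essentially the same strategy as the paper's: use coboundedness to translate a covering of a ball about the fixed basepoint $x_0$ out to an arbitrary ball, with a covering number depending only on the radius. The only difference is that the paper first invokes Theorem \ref{gnact} (i) to replace the quasi-action by an isometric action (so that the covering radius $R_0$ is preserved exactly), whereas you carry out the direct bookkeeping with $A_g$ and $A_{g^{-1}}$ and the inflated radius $KR_0+\epsilon+2C$ --- precisely the alternative the paper notes in its closing parenthetical.
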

\begin{proof}
  As being (uniformly) coarsely proper is a quasi-isometric invariant,
we can use Theorem \ref{gnact} (i) to assume that $G$ is acting by isometries.
  This action is also cobounded, so on picking a basepoint $x_0\in X$
  there exists $D\geq 0$ where for any $x\in X$ we can find $g\in G$
  such that $d(g(x_0),x)\leq D$. Now $X$ being coarsely proper implies that
  there is $R_0\geq 0$ such that for all $R\geq 0$, the ball $B(x_0,R)$
  can be covered by finitely many, say $M_R$, balls of radius $R_0$. But
  this means that the ball $B(g(x_0),R)$ can be covered by the same number
  of balls of radius $R_0$ since $G$ acts by isometries. We have
  $B(x,R)\subseteq B(g(x_0),R+D)$ so that any ball in $X$ of radius $R$
  can be covered by $M_{R+D}$ balls of radius $R_0$ and so $X$ is uniformly
  coarsely proper. (This can also be established by working directly
  with the $(K,\epsilon,C)$ quasi-action rather than quoting Theorem
  \ref{gnact} (i) and keeping track of the constants that emerge in terms of
  $K,\epsilon,C$.)
\end{proof}  
\begin{co} \label{pracb}
If $\alpha$ is any quasi-action of any group $G$  
on an arbitrary metric space $X$ such that quasi-orbits ${\mathcal Q}(x_0)$
are quasi-geodesic spaces and also
quasi-isometrically embed in some proper metric space then
there exists a connected graph $\Gamma'$ of bounded valence which
is quasi-isometric to ${\mathcal Q}(x_0)$,
along with a cobounded quasi-action of $G$ on $\Gamma'$ which
is a reduction of $\alpha$.

If $\alpha$ is a cobounded quasi-action then this reduction is an
equivalence.
\end{co}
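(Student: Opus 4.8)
The plan is to chain together the results already established, with Proposition \ref{blls} doing the essential new work. Corollary \ref{prac} has just produced, from the hypotheses on $\alpha$, a countable subset $S$ of a quasi-orbit and $s>0$ such that the Rips graph $\Gamma_s(S)$ is connected, locally finite, quasi-isometric to ${\mathcal Q}(x_0)$, and carries a cobounded quasi-action $\beta$ which is a reduction of $\alpha$. So I already have a cobounded quasi-action on a locally finite graph; the only missing ingredient is to upgrade ``locally finite'' to ``bounded valence''. Since bounded valence is exactly the graph-theoretic manifestation of uniform coarse properness, this is precisely what Proposition \ref{blls} is designed to supply.

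First I would invoke Corollary \ref{prac} to obtain $\Gamma_s(S)$ and the cobounded quasi-action $\beta$ of $G$ on it, noting that $\Gamma_s(S)$ is quasi-isometric to ${\mathcal Q}(x_0)$ and $\beta$ is a reduction of $\alpha$. Next I would observe that $\Gamma_s(S)$, being a locally finite connected graph, is a proper geodesic metric space and hence coarsely proper by \cite{dcdlh} Proposition 3.D.13 (1) (or simply because local finiteness of a connected graph gives coarse properness directly). Now $\Gamma_s(S)$ carries the cobounded quasi-action $\beta$, so Proposition \ref{blls} applies and tells us that $\Gamma_s(S)$ is in fact \emph{uniformly} coarsely proper. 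By the results quoted from \cite{dcdlh} Section 3.D (Proposition 3.D.16), a quasi-geodesic space that is uniformly coarsely proper is quasi-isometric to a connected graph $\Gamma'$ of bounded valence.

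Finally I would transport the quasi-action across this last quasi-isometry. Let $Q:\Gamma_s(S)\rightarrow\Gamma'$ be the quasi-isometry just obtained, with quasi-inverse $R$. By Example \ref{qunew} the formula $\gamma(g,v):=Q(\beta(g,R(v)))$ defines a quasi-action $\gamma$ of $G$ on $\Gamma'$, and by Example \ref{ex2} (3) the maps $Q$ and $R$ are coarse $G$-equivariant quasi-isometries between $\beta$ and $\gamma$, so these quasi-actions are equivalent; in particular $\gamma$ is cobounded because $\beta$ is (Proposition \ref{qpres} (4)). Since $\beta$ is a reduction of $\alpha$ and reduction is transitive, composing the coarse $G$-equivariant quasi-isometric embedding from $\Gamma_s(S)$ to $X$ with $R$ shows that $\gamma$ is a reduction of $\alpha$ as well. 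The graph $\Gamma'$ is quasi-isometric to $\Gamma_s(S)$ and hence to ${\mathcal Q}(x_0)$, as required. For the cobounded case, Corollary \ref{prac} already guarantees that $\beta$ is equivalent (not merely a reduction) to $\alpha$, and since equivalence is transitive and $\gamma$ is equivalent to $\beta$, the quasi-action $\gamma$ on $\Gamma'$ is equivalent to $\alpha$.

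I do not expect a genuine obstacle here, since all the substantive content is packaged into Proposition \ref{blls} and the cited quasi-isometry invariance of uniform coarse properness. The only point needing a little care is the bookkeeping of which relation (reduction versus equivalence) is preserved at each stage, and in particular verifying that coboundedness survives the final transfer via Example \ref{qunew}; but this is exactly the routine manipulation of constants that Proposition \ref{qpres} and the earlier examples have already licensed.
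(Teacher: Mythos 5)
Your proposal is correct and follows essentially the same route as the paper: apply Corollary \ref{prac} to get the cobounded quasi-action on the locally finite Rips graph, upgrade to uniform coarse properness via Proposition \ref{blls}, cite the characterisation of uniformly coarsely proper quasi-geodesic spaces to obtain the bounded valence graph $\Gamma'$, and transfer the quasi-action by Example \ref{qunew}. Your extra bookkeeping about reduction versus equivalence is exactly the (brief) content of the paper's final parenthetical remark.
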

\begin{proof}
On applying Corollary \ref{prac} we emerge with a reduction of our original
quasi-action $\alpha$ to a cobounded quasi-action $\beta$ on a connected
locally finite graph $\Gamma_s(S)$. Now such a graph is certainly a coarse
proper metric space, so the cobounded quasi-action tells us that $\Gamma_s(S)$
is a uniformly coarse proper space by Proposition \ref{blls}. Moreover
$\Gamma_s(S)$ is certainly a (quasi-)geodesic space so it is quasi-isometric
to a connected graph $\Gamma'$ of bounded valence. We can now use
Example \ref{qunew} again to transfer $\beta$ to some cobounded quasi-action
of $G$ on $\Gamma'$ which is still a reduction of $\alpha$ (and an
equivalence if $\alpha$ were itself cobounded).
\end{proof}
Note that the sufficient conditions given the the above hypothesis are
also necessary.

\section{Group (quasi-)actions on hyperbolic spaces}

We now specialise to the case where our space $X$ is hyperbolic, that 
is a geodesic metric space
satisfying any of the equivalent definitions of $\delta$ - hyperbolicity.
However no further
conditions such as properness of $X$ will be assumed. We have the
Gromov boundary $\partial X$ of $X$ where a boundary point is
an equivalence class of sequences in $X$ under an equivalence relation
which uses the Gromov product on $X$.
This turns $X\cup \partial X$ into a topological space with $X$ (whose
subspace topology is the original topology on $X$) a dense subspace
(though $X\cup\partial X$ need not be compact if $X$ is not proper).

We now consider a group $G$ acting on $X$ by isometries.
As $X$ is hyperbolic this action extends to an
action of $G$ by homeomorphisms (though not isometries)
on $\partial X$. We then obtain the limit set
$\partial_G X\subseteq \partial X$. This is defined to be the intersection
of the set of accumulation points in $\partial X$
of some (any) orbit $G\cdot x_0$ for $x_0\in X$. Consequently $\partial_GX$
is $G$-invariant and we have $\partial_H X\subseteq \partial_G X$ if $H$ is
a subgroup of $G$. Here we note that if the action is cobounded then any
sequence in $X$ representing a point at infinity can be replaced by an
equivalent sequence lying in $G\cdot x_0$, so that $\partial_GX=\partial X$
in this case.

Let us first consider cyclic groups, by taking an arbitrary isometry $g$
of some hyperbolic space $X$ and examining $\partial_{\langle g\rangle} X$.
We can summarise the facts we will need in the following well known
proposition.
\begin{prop} \label{cyc}
(i) If $g$ is elliptic then $\partial_{\langle g\rangle} X=\emptyset$. This
is always the case if $g$ has finite order but might also occur for elements
with infinite order. In either case $g$ might fix many or no points on
$\partial X$.\\
(ii) If $g$ is loxodromic then $\partial_{\langle g\rangle} X$ consists
of exactly 2 points $\{g^\pm\}$ for any $x\in X$ 
and this is the fixed point set of $g$ (and $g^n$
for $n\neq 0$) on $\partial X$. Moreover for any $x\in X$ we have
$g^n(x)\rightarrow g^+$ and $g^{-n}(x)\rightarrow g^-$ in $X\cup\partial X$
as $n\rightarrow\infty$.\\
(iii) If $g$ is parabolic then $\partial_{\langle g\rangle} X$ consists
of exactly 1 point and again this is the fixed point set of $g$ (and $g^n$
for $n\neq 0$) on $\partial X$.\\
\end{prop}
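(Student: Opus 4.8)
The plan is to deduce all three parts from the single orbit map $n\mapsto g^nx_0$, exploiting that here $g$ is a genuine isometry, so $d(g^nx_0,g^ny_0)=d(x_0,y_0)$ for every $n$; the orbits of any two basepoints are therefore within Hausdorff distance $d(x_0,y_0)$, which makes $\partial_{\langle g\rangle}X$ and every convergence statement below independent of the basepoint. I will repeatedly use the defining property of the boundary: a sequence $(z_n)$ converges to a point of $\partial X$ exactly when the Gromov products $(z_m\cdot z_n)_{x_0}\to\infty$, which in particular forces $d(x_0,z_n)\to\infty$. Part (i) is then immediate: if $g$ is elliptic the orbit $\{g^nx_0\}$ is bounded, so no subsequence can have divergent Gromov products and $\partial_{\langle g\rangle}X=\emptyset$. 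The two parenthetical remarks are not part of the dichotomy; I would dispose of them by example (a rotation of $\HH^2$ is elliptic of finite or infinite order and fixes no boundary point, while a finite-order symmetry of a star-shaped tree that fixes some of its rays is elliptic and fixes boundary points), and otherwise ignore them.

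For part (ii), loxodromic means $n\mapsto g^nx_0$ is a $(K,\epsilon)$-quasi-isometric embedding of $\Z$, so the half-orbits $\{g^nx_0\}_{n\ge0}$ and $\{g^{-n}x_0\}_{n\ge0}$ are quasi-geodesic rays. First I would prove the standard lemma that a quasi-geodesic ray in a $\delta$-hyperbolic space converges to a boundary point: the quasigeodesic inequality combined with the hyperbolic inequality gives $(g^mx_0\cdot g^nx_0)_{x_0}\ge\min(m,n)/K-\epsilon'\to\infty$, with no appeal to properness, yielding limits $g^+=\lim_ng^nx_0$ and $g^-=\lim_ng^{-n}x_0$ in $X\cup\partial X$. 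That $g^+\ne g^-$ follows from $(g^nx_0\cdot g^{-n}x_0)_{x_0}$ being bounded, since $x_0$ lies coarsely between the two ends of the quasigeodesic line. Invariance is then formal: $g$ acts as a homeomorphism of $X\cup\partial X$ and $g\cdot g^nx_0=g^{n+1}x_0$, so $g(g^+)=\lim_ng^{n+1}x_0=g^+$ and likewise $g(g^-)=g^-$ (and the same for every $g^k$, $k\ne0$). Finally any accumulation point of the orbit comes from a subsequence $g^{n_k}x_0$; bounded $n_k$ keeps the points bounded and contributes no boundary point, so after thinning either $n_k\to+\infty$ (limit $g^+$) or $n_k\to-\infty$ (limit $g^-$). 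Hence $\partial_{\langle g\rangle}X=\{g^+,g^-\}$ has exactly two points, and the stated convergence $g^n(x)\to g^+$, $g^{-n}(x)\to g^-$ holds for every basepoint by the Hausdorff closeness noted above.

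It remains, for (ii), to identify $\{g^+,g^-\}$ as the \emph{entire} fixed-point set of $g$ on $\partial X$, which is north--south dynamics: if $\xi\in\partial X$, $g\xi=\xi$ and $\xi\ne g^-$, then applying $g^n$ must drive $\xi$ to $g^+$, forcing $\xi=g^+$. I would run this through Gromov-product estimates along the quasigeodesic axis $\{g^nx_0\}$, whose stability is guaranteed by the Morse lemma (valid in any geodesic $\delta$-hyperbolic space): for $\xi\ne g^-$ the product $(\xi\cdot g^-)_{x_0}$ is finite, and translating by $g^n$ pushes toward $g^+$ so that $(g^n\xi\cdot g^+)_{x_0}\to\infty$; the same argument applied to $g^k$ gives its fixed set. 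This is the step I expect to be the main obstacle, precisely because $X$ is not assumed proper: I cannot use compactness of $X\cup\partial X$ or the existence of a genuine bi-infinite geodesic axis, so everything must be carried by Gromov products and the stability of quasigeodesics, both of which survive the loss of properness.

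For part (iii), $g$ is parabolic, meaning the orbit is unbounded but $n\mapsto g^nx_0$ is not a quasi-isometric embedding. The limit set is nonempty, though this is where non-properness bites hardest: in a proper space it is immediate from compactness of $X\cup\partial X$, whereas in general one must extract a subsequence with divergent Gromov products using the isometry structure of the orbit, exactly as in the standard classification of isometries of (not necessarily proper) Gromov-hyperbolic spaces. There is at most one limit point, for two distinct ones would have finite Gromov product at $x_0$ and, by the Morse lemma, would be joined through the orbit by a bi-infinite quasigeodesic that $g$ translates; the orbit map would then parametrize it, making $n\mapsto g^nx_0$ a quasi-isometric embedding and hence $g$ loxodromic, contrary to hypothesis. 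Thus $\partial_{\langle g\rangle}X$ is a single point, which is fixed by $g$, and likewise by each $g^k$ with $k\ne0$, because the limit set is $\langle g\rangle$-invariant.
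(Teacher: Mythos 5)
The paper offers no proof of Proposition \ref{cyc} to compare against: it is introduced with ``We can summarise the facts we will need in the following well known proposition'' and no proof environment follows, so the result is simply being quoted from the standard classification of isometries of (not necessarily proper) hyperbolic spaces. Your outline is that standard argument. Part (i) is complete. Part (ii) is correct in structure: the divergence of Gromov products along a quasi-geodesic ray, the invariance of $g^\pm$, the exhaustion of accumulation points by the two half-orbits, and the reduction of ``fixed set $=\{g^+,g^-\}$'' to north--south dynamics are all the right steps, and you correctly identify that the only delicate point is carrying north--south dynamics without properness (which is indeed doable purely with Gromov products and the Morse lemma).

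There is, however, one genuine gap in part (iii). The proposition asserts that the single limit point of a parabolic is \emph{the} fixed point set of $g$ on $\partial X$; you prove only one inclusion (the limit point is fixed, by $\langle g\rangle$-invariance of the limit set) and never argue that $g$ fixes no \emph{other} boundary point. This direction is not automatic and does not follow from uniqueness of the limit point. The standard route: if $g$ fixed two distinct points $\xi,\eta\in\partial X$, then since $(\xi\cdot\eta)_{gx}=(g\xi\cdot g\eta)_{gx}=(\xi\cdot\eta)_x$, the orbit of $x_0$ stays in a sublevel set $\{x:(\xi\cdot\eta)_x\leq r\}$, which is coarsely a quasi-line joining $\xi$ to $\eta$; an isometry coarsely stabilising a space quasi-isometric to $\R$ is elliptic or loxodromic, never parabolic, contradicting the hypothesis. (Establishing that this sublevel set is coarsely a line without properness is itself a small exercise with geodesics $[x_n,y_n]$ for $x_n\to\xi$, $y_n\to\eta$.) Separately, the non-emptiness of the limit set of a parabolic in a non-proper space --- which you rightly flag as where non-properness bites hardest --- is deferred to ``the standard classification'' rather than argued; given that the paper itself cites the whole proposition as well known, that deferral is defensible, but it is the one step of the classification that is genuinely non-routine and you should at least supply a reference for it.
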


Moving back now from cyclic to arbitrary groups, for any group $G$ 
(not necessarily finitely generated)
acting by isometries on an arbitrary hyperbolic space $X$, we have the
Gromov classification dividing possible actions into five very different
classes. In the case that $G=\langle g\rangle$ is cyclic, the first three 
classes correspond to the three cases in Proposition \ref{cyc} and
the next two do not occur (for these facts and related references, see 
for instance \cite{abos}):\\
\hfill\\
(1) The action has {\bf bounded orbits}. This happens exactly when
$\partial_G X$ is empty, in which case all elements are elliptic.
(Note that if $\partial_GX=\emptyset$ then it requires further work to
argue that orbits are bounded, because we cannot assume in general
that $X\cup\partial X$ is (sequentially) compact.)
However if all elements are elliptic then we can also be in case (2).\\
\hfill\\
(2) The action is {\bf parabolic} (or horocyclic),
meaning that $\partial_G X$ has exactly
one point $p$. Note that, despite the name, we
can still be in this case but with $G$ consisting only of elliptic elements.
However any non elliptic element must be parabolic with limit
set $\{p\}$.\\
\hfill\\
(3) The action is {\bf lineal}, meaning that
$\partial_G X=\{p,q\}$ has exactly 2 points.
These points might be swapped, so for this paper we use the shorthand that
the action is of type (3)$^+$ if they are 
pointwise fixed by $G$ and of type (3)$^-$ if not
(thus a group $G$ with a type (3)$^-$ action has a unique subgroup
of index 2 with a type $(3)^+$ action). In both cases there
will exist some loxodromic element in $G$ with limit set $\{p,q\}$ and
indeed all loxodromics have this limit set. Moreover $G$ contains
no parabolics (such an element $g$ would have to fix $p$ or $q$ but if
it were $p$ then $g$ would move $q$ outside $\{p,q\}$ which is
$G$-invariant) but there might be many elliptic elements in $G$.\\
\hfill\\
(4) The action is {\bf quasi-parabolic} (or focal).
This says that the limit set
has at least 3 points, so is infinite, but there is some point
$p\in\partial_G X$ which is globally fixed by $G$. This implies that
$G$ contains a pair of loxodromic elements with limit sets
$\{p,q\}$ and $\{p,r\}$ for $p,q,r$ distinct points.\\
\hfill\\
(5) The action is {\bf general}: the limit set is infinite and we have two
loxodromic elements with disjoint limit sets, thus by ping pong high powers
of these elements will generate a quasi-embedded copy of the free group
$F_2$.\\

We refer to these descriptions, or just the numbers (1) to (5)
as the {\bf hyperbolic type} of the action.
Suppose that $G$ has an isometric action on the hyperbolic space
$X$ which reduces to another isometric action on the geodesic space $Y$.
Thus we have a coarse $G$-equivariant quasi-isometric
embedding $F$ from $Y$ to $X$. As $Y$ is a geodesic space which
quasi-isometrically embeds in a hyperbolic space, it too is hyperbolic
(\cite{bh} Theorem III.H.1.9). Now we have already said in Proposition
\ref{qpres} that
the isometry type of individual elements will be the same in both actions
and indeed the hyperbolic type of the action will be the same too.
This is shown in \cite{abos} Lemma 4.4
when $F$ is also a quasi-isometry but the proof goes through for this case
too. Moreover it is also mentioned that the parity of type (3) actions, that
is whether they are (3)$^+$ or (3)$^-$, is preserved too by this process.

We quote the following proposition which tells us how the orbits look
in any of these five types of action. This is Proposition 3.2 in
\cite{cdcmt}. (Note they use the term quasi-convex, which is equivalent to
our term quasi-geodesic space by Theorem \ref{qugd}. This is
not equivalent to the usual meaning of quasi-convex in arbitrary
geodesic metric spaces, but it is for hyperbolic spaces using the Morse
lemma.)
\begin{prop} \label{orblk}
  For a type (2)
  (parabolic) action, no orbit is a quasi-geodesic space. In any type (1)
  (bounded), type (3) (lineal) or type (4)
  (quasi-parabolic) action, all orbits are quasi-geodesic spaces.
  In a type (5) (general) action, orbits may or
  may not be quasi-geodesic spaces.
\end{prop}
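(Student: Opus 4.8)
The plan is to treat the five Gromov types separately, using throughout the working criterion recorded just before the statement: for a subset $S$ of the hyperbolic space $X$, being a quasi-geodesic space in the subspace metric is equivalent (via the Morse lemma) to being quasiconvex, i.e.\ $S$ is coarse connected and there is $\kappa\geq 0$ with every geodesic of $X$ joining two points of $S$ lying in the $\kappa$-neighbourhood of $S$. Since the action is isometric, each quasi-orbit is an honest orbit $O=G\cdot x_0$. Type (1) is then immediate, a bounded orbit being quasi-isometric to a point. For type (3) I would fix a loxodromic $g$ (one exists by the classification), whose cyclic orbit $\langle g\rangle x_0$ is a quasigeodesic line $\ell$ with endpoints $\partial_G X=\{p,q\}$. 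For any $h\in G$ the conjugate $hgh^{-1}$ is loxodromic with fixed pair $h\{p,q\}=\{p,q\}$, so its axis $h\ell$ is a quasigeodesic with the same endpoints at infinity and hence, by stability of quasigeodesics, is Hausdorff close to $\ell$ with a constant independent of $h$. Thus $hx_0\in h\ell$ lies uniformly close to $\ell$, so $O$ lies in a bounded neighbourhood of $\ell$ while containing the coarsely dense subset $\langle g\rangle x_0$ of $\ell$; hence $O$ is Hausdorff close to $\ell$ and quasi-isometric to $\R$. The reversing case (3)$^-$ is identical, as $h$ still preserves the pair $\{p,q\}$.

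The focal type (4) is the first place where genuine work is required. Here $p$ is globally fixed and loxodromics exist. Through each orbit point $a=hx_0$ I would produce a quasi-axis to $p$ that is coarsely populated by orbit points: taking $g$ loxodromic with $x_0$ near its axis, the set $h\langle g\rangle x_0=\{hg^n x_0\}\subseteq O$ is coarsely dense in the quasigeodesic $h\cdot\mathrm{axis}(g)$, one of whose endpoints is $hp=p$. Given two orbit points $a,b$, the two resulting quasirays to $p$ eventually fellow-travel (rays to a common boundary point are uniformly Hausdorff close far out), and the geodesic $[a,b]$ lies in a uniformly bounded neighbourhood of the union of these two rays (thinness of ideal triangles, together with the Morse lemma to pass from the quasi-axes to genuine rays). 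Since each ray is coarsely inside $O$, the geodesic $[a,b]$ stays uniformly close to $O$; this simultaneously gives coarse connectedness and quasiconvexity, so $O$ is a quasi-geodesic space.

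The parabolic type (2) is the crux, and I would prove the negative statement using a Busemann function $b$ at the unique limit point $p$, which $G$ fixes since $\partial_G X=\{p\}$ is $G$-invariant. The key point is that the orbit is trapped in a bounded horofunction-slab. Indeed the homogeneous Busemann quasimorphism $g\mapsto\lim_n b(g^n x_0)/n$ vanishes identically, because its nonvanishing on some element would force that element to be loxodromic, contradicting type (2); consequently the quasimorphism $\phi(g)=b(gx_0)-b(x_0)$ has zero homogenization and is therefore bounded, giving $O\subseteq\{x:|b(x)-b(x_0)|\leq D\}$ for some $D$. As $O$ is unbounded (the limit set is nonempty), I can take orbit points $a,b$ with $d(a,b)=t$ arbitrarily large, and a standard hyperbolic estimate shows the geodesic $[a,b]$ must descend towards $p$ to depth of order $t/2$, so its highest point lies at distance tending to infinity from the slab, hence from $O$. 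This violates quasiconvexity for any fixed $\kappa$, so no orbit is a quasi-geodesic space. I expect the main obstacle to be exactly this parabolic step, specifically justifying the slab confinement (boundedness of $\phi$, where the ``no loxodromic'' hypothesis enters) and the descent estimate, whereas cases (1) and (3) are routine.

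Finally, for the general type (5) no uniform conclusion is claimed, so here I would only exhibit both behaviours. Any cobounded general action, for instance $F_2$ acting on its Cayley tree, has coarse dense orbit, hence an orbit quasi-isometric to $X$ and thus a quasi-geodesic space. On the other hand a finitely generated non-quasiconvex subgroup of a hyperbolic group, acting on the ambient space, gives a general action whose orbit is not quasiconvex and hence not a quasi-geodesic space. These are the examples referred to in Section 5, and together they establish the ``may or may not'' assertion.
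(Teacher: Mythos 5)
Your proposal is correct, but there is nothing in the paper to compare it against step by step: the paper does not prove Proposition \ref{orblk} at all, it simply quotes it as Proposition 3.2 of \cite{cdcmt}, adding only the remark that ``quasi-convex'' there coincides with ``quasi-geodesic space'' here via the Morse lemma (which is exactly the working criterion you set up at the outset). So what you have written is an independent proof of the cited result, and it is sound. Type (1) is trivial; in type (3) the sets $h\ell=\{hg^nx_0\}$ are $(K,0)$-quasi-geodesics with endpoints in $\{p,q\}$ uniformly in $h$, so Morse stability does confine the whole orbit to a bounded neighbourhood of $\ell$ (and the orientation-reversing case changes nothing); in type (4) the two quasi-rays $\{hg^nx_0\}_{n\ge 0}$ and $\{h'g^nx_0\}_{n\ge 0}$ of orbit points to the common fixed point $p$ coarsely carry the geodesic $[a,b]$; and in type (2) the absence of loxodromics forces the homogeneous Busemann quasi-morphism to vanish, hence the inhomogeneous one to be bounded by its defect, trapping the unbounded orbit in a horofunction slab that geodesics between distant orbit points must leave to depth about $\tfrac12 d(a,b)$. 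Your type (5) examples are the ones the paper itself uses elsewhere. Two places deserve more care than your sketch gives them, both because $X$ is not assumed proper. In type (4) genuine geodesic rays from $a,b$ to $p$ need not exist, so ``thinness of ideal triangles'' should be replaced by thinness of the quadrilateral $a,\,hg^nx_0,\,h'g^nx_0,\,b$ for large $n$: since the Gromov product $(hg^nx_0\,|\,h'g^nx_0)_{x_0}\to\infty$, the side $[hg^nx_0,h'g^nx_0]$ stays far from $[a,b]$, forcing $[a,b]$ into a $2\delta$-neighbourhood of the two finite quasi-geodesic segments of orbit points. In type (2) the ``descent estimate'' has to be run with the $\liminf$ definition of the quasi-horofunction: using $d(x_n,[a,b])\le (a|b)_{x_n}+2\delta$ and passing to a subsequence along which the nearest points $z_n\in[a,b]$ converge, one gets $w\in[a,b]$ with $\eta(w)\ge\tfrac12(\eta(a)+\eta(b))+\tfrac12 d(a,b)-C(\delta)$, where the independence of the choice of subsequence is exactly the robustness statement the paper quotes from \cite{mnarth} Lemma 4.6; since $\eta$ is $1$-Lipschitz this gives $d(w,O)\ge\tfrac12 d(a,b)-C'$ and kills quasiconvexity. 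Neither point is a gap in the idea, only a place where the standard argument must be phrased without properness, using the same non-proper Morse-type lemmas the paper invokes elsewhere.
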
  

We now give some examples that illustrate this classification and which
will be relevant later.\\
{\bf Example 1}:
Any infinite finitely generated group acts by isometries on its
Groves - Manning combinatorial
horoball (see \cite{mnarth} 3.1) which is a hyperbolic space. This is an
action of type (2)
with the finite order elements acting elliptically (as always)
and all infinite order elements acting parabolically. The space is
a locally finite (but not bounded valence)
hyperbolic graph with $G$ acting metrically properly and freely.
Moreover any countable group also has such an action by embedding
it in a finitely generated group.

For these examples we then have that no orbit is a quasi-geodesic space by
Proposition \ref{orblk}. But by Proposition \ref{fgn} (ii) orbits are coarse
connected exactly when the group is finitely generated. Thus we obtain lots
of examples of bad orbits and lots of examples of orbits which are neither
good nor bad, according to Definition \ref{gdbd}.\\
\hfill\\
{\bf Example 2}:
It might be expected that no action of type (2) exists on a simplicial tree
because there are no parabolic isometries. This is true if the group is
finitely generated by \cite{ser} I.6.5 (indeed for a generating set
$\{g_1,\ldots ,g_n\}$ it is enough to check that each $g_i$ and $g_ig_j$ is
elliptic, whereupon the action is of type (1) with a global fixed point).
However, given any countably infinite group $G$
which is locally finite (every finitely generated subgroup is finite),
we can obtain an action of $G$ on a locally finite tree with unbounded orbits
where every element acts elliptically. This is sometimes called the
coset construction and is described in \cite{ser} I.6.1 or in  \cite{bh}
II.7.11.
The basic idea is to express $G$ as an increasing union
$\cup_{i=0}^\infty G_i$ of finite subgroups $G_i$ with $G_0=\{e\}$. The 0th
level vertices are the elements of $G$, the 1st level vertices are the
cosets of $G_1$ and so on. The stabiliser of a vertex in level $i$
has size $|G_i|$ and its valence is
$[G_i:G_{i-1}]+1$ (and 1 in level zero). Thus this action is metrically
proper and we even have a bounded valence tree
if $[G_i:G_{i-1}]$ is bounded.\\
\hfill\\
{\bf Example 3}: If $G$ acts isometrically on a hyperbolic space $X$
then we can ask: does $G$ have an isometric action on some hyperbolic space
where the action is of the same type but is cobounded? By Proposition
\ref{orblk} no action of type (2) on any hyperbolic space can be cobounded
because then an orbit would be quasi-isometric to this space and so
would be quasi-geodesic. Moreover the group $G$ in Example 2 is torsion,
so cannot admit actions of types (3), (4) or (5) where loxodromics are
present, which means that its only cobounded actions on hyperbolic spaces
are bounded actions on bounded spaces. However it is shown in
the appendix of \cite{bargen} that this question has a positive answer
if the action is not of type (2). But if the orbits are
quasi-geodesic spaces then we can merely form the Rips graph
$\Gamma_s(G\cdot x_0)$ using the orbit of any $x_0\in X$. We have seen that
the resulting action of $G$ on this graph, which is clearly cobounded,
is a reduction of the original action for $s$ large enough and if so
then $X$ will be hyperbolic.
Thus the two actions will have many properties in common including
being of the same type. By Proposition \ref{orblk}, if the original
action is of types (1), (3), (4) or of type
(5) with quasi-geodesic orbits
then we have a more direct construction of a cobounded action, but this
will not deal with type (5) actions if the orbits are not quasi-geodesic
spaces. Indeed in this case by Lemma \ref{ccnq} (ii), no reduction to
a cobounded isometric action on a hyperbolic space exists so here the
construction has to be more involved and will necessarily change the coarse
behaviour of the orbits.

\section{Coarse geometry of trees}

We now turn to the spaces that really interest us, namely simplicial
trees and those metric spaces which are quasi-isometric to simplicial
trees (we reserve the term quasi-tree for such spaces which are also
geodesic).

We first look at convex closures. Given any geodesic metric space $X$,
we say that a subset $C$ of $X$ is {\bf convex} if any geodesic
between two points in $C$ also lies entirely in $C$. In a tree $T$, convex
and connected subsets are the same thing because geodesics are unique
and removing any point from $T$ makes it disconnected
(here we can use connectedness and path connectedness interchangeably).
For any subset
$S$ of $X$, we can define the {\bf convex closure} $CCl(S)$ of $S$ in $X$ to
be the smallest convex subset of $X$ containing $S$. As an arbitrary
intersection of convex subsets is convex and $X$ is itself convex, this
set $CCl(S)$ exists and is well defined. In general $CCl(S)$
can be bigger than merely taking the points of $S$ union all geodesics in $X$
between points of $S$. But in a simplicial tree $T$ equipped with the path
metric $d_T$, these sets are equal.
This can be seen by taking four arbitrary
points $a,b,c,d\in S$ and then considering a point $x$ on the geodesic $[a,b]$
and a point $y$ on $[c,d]$. By drawing the various cases of how $[a,b]$
and $[c,d]$ meet (if at all) and the possible locations of $x$ and $y$,
in each case we can find a geodesic between two of the points $a,b,c,d$ which
contains both $x$ and $y$, hence also $[x,y]$.

Moreover if $S$ is a subset
of the vertices of $T$ (which is the case we will be using) then $CCl(S)$
is naturally a simplicial graph which itself has no closed loops, thus
it is a subtree of $T$ and its path metric is the same as the
restriction of $d_T$. We might now ask: when is an arbitrary
subset $S$ of $T$ (given the subspace metric) itself quasi-isometric to some
simplicial tree and when can we take that tree to be $CCl(S)$?
Here we can give a complete answer.
\begin{thm} \label{sstt}
Let $T$ be a simplicial tree with path metric $d_T$ and let $S$ be any
non empty subset of $T$, equipped with the subspace metric. Then the
following are equivalent.\\
(i) $S$ is coarse connected.\\
(ii) $S$ is a quasi-geodesic space.\\
(iii) $S$ is quasi-isometric to some simplicial tree.\\
(iv) The natural inclusion of $S$ in its convex closure $CCl(S)$ is a
quasi-isometry.
\end{thm}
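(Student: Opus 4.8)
The plan is to establish the cycle of implications (i) $\Rightarrow$ (iv) $\Rightarrow$ (iii) $\Rightarrow$ (ii) $\Rightarrow$ (i), with only the first implication requiring real work. The implication (iii) $\Rightarrow$ (ii) is immediate, since a simplicial tree with its path metric is a geodesic metric space, so anything quasi-isometric to a simplicial tree is quasi-isometric to a geodesic space, i.e. is a quasi-geodesic space by definition. For (ii) $\Rightarrow$ (i), a quasi-geodesic space is quasi-isometric to a geodesic space $Y$, which is $1$-coarse connected; applying a quasi-inverse of this quasi-isometry and using that quasi-isometries preserve coarse connectedness (as recorded in Section 2) shows $S$ is coarse connected.

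Before proving (i) $\Rightarrow$ (iv), I would record the key reduction: since $CCl(S)$ is convex in $T$, its intrinsic path metric agrees with the restriction of $d_T$, so the natural inclusion $S\hookrightarrow CCl(S)$ is an isometric embedding and hence automatically a $(1,0)$-quasi-isometric embedding. Thus (iv) holds precisely when this inclusion is coarse onto, that is, when $S$ is coarse dense in $CCl(S)$, and this is what I would prove from (i). Recall from the discussion preceding the theorem that in a tree one has $CCl(S)=S\cup\bigcup_{a,b\in S}[a,b]$, so every point $p\in CCl(S)$ either lies in $S$, at distance $0$, or lies on a geodesic $[a,b]$ with $a,b\in S$.

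So suppose $S$ is $c$-coarse connected and fix such a point $p$ on a geodesic $[a,b]$. Choosing a $c$-coarse path $a=x_0,x_1,\ldots,x_n=b$ inside $S$, the union $P=\bigcup_{i=1}^n[x_{i-1},x_i]$ is a connected subset of $T$ containing $a$ and $b$, since consecutive segments share the endpoint $x_i$. Because removing any interior point of $[a,b]$ separates $a$ from $b$ in the tree $T$, the connected set $P$ must contain all of $[a,b]$, in particular $p$; hence $p\in[x_{i-1},x_i]$ for some $i$, and as $d_T(x_{i-1},x_i)\le c$ we obtain $d_T(p,x_i)\le c$ with $x_i\in S$. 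Therefore $S$ is $c$-coarse dense in $CCl(S)$, giving (iv).

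The main obstacle is exactly this last step. The tempting approach of projecting the coarse path onto $[a,b]$ fails, because a point $x_i\in S$ of the coarse path may lie arbitrarily far from $[a,b]$, so one cannot bound $d_T(p,x_i)$ through its projection; the resolution is the tree-specific fact that the union of the short geodesic segments already swallows the whole of $[a,b]$, so no projection is needed. It then remains to check (iv) $\Rightarrow$ (iii): here I would let $T'$ be the union of all closed edges of $T$ that meet $CCl(S)$, which is a connected subcomplex and hence a genuine simplicial subtree of $T$ lying within Hausdorff distance $1$ of $CCl(S)$. Since $S$ is quasi-isometric to $CCl(S)$ by (iv) and $CCl(S)$ is quasi-isometric to the simplicial tree $T'$, this yields (iii).
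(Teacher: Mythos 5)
Your proof is correct and follows essentially the same route as the paper: reduce everything to showing that (i) implies coarse density of $S$ in $CCl(S)$, using the fact that an interior point of a geodesic $[a,b]$ separates $a$ from $b$ in the tree. The only difference is organizational --- you argue directly (a $c$-coarse path in $S$ yields a connected union of short segments that must swallow $[a,b]$), whereas the paper argues contrapositively (a point of $[a,b]$ deep inside $T\setminus S$ partitions $S$ into pieces pairwise far apart); both yield the same quantitative conclusion, and your handling of the remaining implications, including replacing $CCl(S)$ by a genuine subcomplex for (iv) $\Rightarrow$ (iii), matches the paper's.
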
  
\begin{proof}
First suppose that $S$ is a subset of vertices of $T$, in which case it was
mentioned that the convex closure $CCl(S)$ is a subtree of $T$. Therefore
we need only show that (i) implies (iv). But $S$ and $CCl(S)$ both inherit
their respective metrics from $T$, so we just need that $S$ is coarse
dense in $CCl(S)$.

Therefore take some large $K>0$ and assume that $S$ is not $K$-coarse
dense in $CCl(S)$. This would mean that
we have $x,y\in S$ and a geodesic $[x,y]$ in $T$ between these points,
but there is $z\in [x,y]$ having distance more than $K$ to any point
of $S$. Now removing the point $z$ from $T$ makes the resulting
space disconnected and with $x,y$ in different connected components.
Let us intersect each of the connected components of $T\setminus \{x\}$ with
$S$ to form the sets $A_i$ (for $i$ in some indexing set $I$) which partition
$S$ and suppose that $x\in A_0$ say. Note that each $A_i$ is contained in
$T\setminus B(z,K)$ because $S$ is.
On taking two arbitrary points $a\in A_0$ and $b\in A_j$ for $A_0\neq A_j$ and
the unique geodesic in $T$ between them, we see that $[a,b]$ has to pass
through $z$ as $a$ and $b$ are in different components of $T\setminus\{z\}$.
We have $d(a,z)>K$ and the same for $d(b,z)$, hence
\[d(a,b)=d(a,z)+d(z,b)>2K.\]
Thus $S$ is not $2K$-coarse connected, because on starting at $x\in A_0$
and taking jumps in $S$ of length at most $2K$, we remain in $A_0$ so never
reach $y\in S\setminus A_0$. We can now let $K$ tend to infinity for a
contradiction.

In the case where $S$ is an arbitrary subset of points rather than just
vertices, it is always true that (ii) implies (i). Moreover
the argument just given works identically here so that (i) implies (iv)
here too and $CCl(S)$ is certainly a geodesic space so (iv) implies (ii).
The only difference now is that $CCl(S)$ might not strictly be a simplicial
tree because edges might get ``chopped off'' and so not have length 1,
but these can be removed to obtain a simplicial tree which is quasi-isometric
to $CCl(S)$, thus (iv) implies (iii) which certainly implies (ii).
\end{proof}   

This gives us the following useful corollaries which apply to all spaces
that are quasi-isometric to some simplicial tree, without any finiteness
assumptions.

\begin{co} \label{nopar}
Suppose that $\alpha$ is a quasi-action of an arbitrary group
$G$ on a metric space $X$ which is quasi-isometric to some simplicial
tree. If $\alpha$ has coarse connected quasi-orbits then any quasi-orbit
is also quasi-isometric to a tree and hence is
a quasi-geodesic space.
\end{co}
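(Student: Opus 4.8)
The plan is to transport the quasi-orbit into the tree and then apply Theorem \ref{sstt}, so that essentially all the work has already been done. First I would fix a quasi-isometry $q:X\to T$ witnessing the hypothesis that $X$ is quasi-isometric to the simplicial tree $T$, and choose any basepoint $x_0\in X$. Restricting $q$ to the quasi-orbit ${\mathcal Q}(x_0)$, equipped with the subspace metric inherited from $X$, gives a quasi-isometric embedding onto its image $S:=q({\mathcal Q}(x_0))\subseteq T$, where $S$ carries the subspace metric from $T$. Since a quasi-isometric embedding is automatically a quasi-isometry onto its image, the metric spaces ${\mathcal Q}(x_0)$ and $S$ are quasi-isometric.

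Next, because ${\mathcal Q}(x_0)$ is coarse connected by hypothesis and quasi-isometric embeddings preserve coarse connectedness of subsets (as recorded in Section 2), the set $S$ is a coarse connected subset of $T$. Now $S$ is precisely a non-empty subset of the simplicial tree $T$ taken with the subspace metric, so Theorem \ref{sstt} applies directly: condition (i) holds for $S$, whence conditions (ii) and (iii) also hold, giving that $S$ is a quasi-geodesic space and is quasi-isometric to some simplicial tree.

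Finally I would transport these two conclusions back across the quasi-isometry between $S$ and ${\mathcal Q}(x_0)$. Being quasi-isometric to a tree is transitive under quasi-isometry, and being a quasi-geodesic space is a quasi-isometry invariant by the definition following Theorem \ref{qugd}; hence ${\mathcal Q}(x_0)$ is itself quasi-isometric to a tree and is a quasi-geodesic space. Since $x_0$ was arbitrary this covers every quasi-orbit (and one could alternatively quote that any two quasi-orbits are Hausdorff close, hence quasi-isometric). The argument is really just bookkeeping once Theorem \ref{sstt} is available; the only point that needs a little care, and the nearest thing to an obstacle, is to keep the subspace metrics straight so that Theorem \ref{sstt} is invoked for $S$ exactly as a metric subspace of $T$, rather than for ${\mathcal Q}(x_0)$ under some extrinsic or intrinsic metric. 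There is no genuine difficulty beyond this.
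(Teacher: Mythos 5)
Your proof is correct and follows essentially the same route as the paper: transport the quasi-orbit into the tree via the given quasi-isometry, apply Theorem \ref{sstt} to the coarse connected image, and pull the conclusions back. The paper packages the transport step as passing to an equivalent quasi-action on $T$ and citing Lemma \ref{ccnq}(i), whereas you push the quasi-orbit forward directly, but this is only a cosmetic difference.
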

\begin{proof} Change $\alpha$ into an equivalent quasi-action $\alpha'$
  on the
  tree $T$, which will still have coarse connected quasi-orbits by Lemma
  \ref{ccnq} (i). By Theorem \ref{sstt} any quasi-orbit of $\alpha'$
  on $T$ will be quasi-isometric to some tree. Thus the same holds for
the quasi-orbits of $\alpha$ too, by Lemma \ref{ccnq} (i) again.
\end{proof}
Note that the examples both at the end of Section 3 and 
Section 5 Example 2 each demonstrate an isometric action of
a (necessarily infinitely generated) group on a bounded valence simplicial
tree where the orbits are not coarse connected and so this
conclusion fails.

Our next two corollaries generalise the
result in \cite{mnlms} Section 3 that any cyclic group $\langle g\rangle$
acting by isometries, or indeed quasi-acting,
on a quasi-tree is either elliptic or loxodromic.
\begin{co} \label{nfgp}
  No isometric action of
  any finitely generated group $G$ on any quasi-tree can be a type (2)
  (parabolic) action.
\end{co}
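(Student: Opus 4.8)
The plan is to argue by contradiction, playing off the coarse connectedness of orbits that finite generation guarantees against two incompatible descriptions of those orbits: one coming from the quasi-tree structure of the target and one forced by the parabolic type. First I would record a preliminary remark: a quasi-tree, being a geodesic space quasi-isometric to a simplicial (hence $0$-hyperbolic) tree, is itself hyperbolic by \cite{bh} Theorem III.H.1.9. Thus the Gromov classification of isometric actions applies to any isometric action of $G$ on $X$, and it is meaningful to speak of such an action being of type (2).

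Suppose then, for contradiction, that a finitely generated group $G$ admits a type (2) (parabolic) isometric action on a quasi-tree $X$. The first key step is to apply Proposition \ref{fgn} (i): since $G$ is finitely generated, every orbit---which coincides with a quasi-orbit here, the action being isometric---is $r$-coarse connected for a suitable $r$ depending on the basepoint and a finite generating set. The second step is to feed this into Corollary \ref{nopar}. Its hypotheses are now met exactly: $X$ is quasi-isometric to a simplicial tree and the quasi-orbits are coarse connected. It therefore delivers that every orbit is a quasi-geodesic space.

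The final step is simply to observe that this contradicts Proposition \ref{orblk}, which states that in a type (2) (parabolic) action no orbit is a quasi-geodesic space. The two conclusions cannot both hold, so no finitely generated group admits a type (2) action on a quasi-tree.

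I do not expect a genuine obstacle here; the substance lies entirely in verifying that the hypotheses of the three cited results line up. The only points deserving a moment's care are that `orbit' and `quasi-orbit' may be used interchangeably for an isometric action, and---more importantly---that it is the quasi-tree hypothesis, not merely hyperbolicity of $X$, that licenses the appeal to Corollary \ref{nopar}. Indeed, a merely hyperbolic (but non-quasi-tree) target leaves room for type (2) actions of finitely generated groups with coarse connected orbits: the Groves--Manning combinatorial horoball actions of Section 5 Example 1 are precisely such actions, and there the orbits are coarse connected yet fail to be quasi-geodesic spaces. So the argument must genuinely use that the space is quasi-isometric to a tree.
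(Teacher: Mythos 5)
Your argument is correct and is essentially the paper's own proof: the paper applies Proposition \ref{fgn} (i) to get coarse connected orbits, passes to quasi-geodesic orbits via Theorem \ref{sstt}, and contradicts Proposition \ref{orblk}, while you route the middle step through Corollary \ref{nopar}, which is just Theorem \ref{sstt} packaged with the transfer along the quasi-isometry to the tree. Your closing remark correctly identifies that the quasi-tree hypothesis (not mere hyperbolicity) is what makes the middle step work.
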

\begin{proof}
  By Proposition \ref{fgn} (i) the orbits are coarse connected, so are
quasi-geodesic spaces by Theorem \ref{sstt}. If this were a type (2) 
action, Proposition \ref{orblk} would be contradicted.
\end{proof}
\begin{co} \label{serc}
Suppose that $G$ is a finitely generated group with a quasi-action on some
metric space that is quasi-isometric to a simplicial tree. If no element
quasi-acts loxodromically then this quasi-action has bounded quasi-orbits.
\end{co}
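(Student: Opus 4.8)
The plan is to pass from the quasi-action $\alpha$ to a genuine isometric action on a hyperbolic space and then invoke the Gromov classification, using the absence of loxodromics together with Corollary \ref{nfgp} to pin down the type. First, since $G$ is finitely generated, Proposition \ref{fgn}(i) tells us that the quasi-orbits of $\alpha$ are coarse connected; as the ambient space is quasi-isometric to a simplicial tree, Corollary \ref{nopar} upgrades this to the quasi-orbits being quasi-geodesic spaces (indeed quasi-isometric to a tree). Hence the hypotheses of Lemma \ref{ccnq}(iii) hold and we may reduce $\alpha$ to a cobounded isometric action $\beta$ of $G$ on a geodesic metric space $Y$.

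Next I would identify $Y$ as a hyperbolic quasi-tree. Since $\beta$ is cobounded, $Y$ is quasi-isometric to any of its quasi-orbits, which by Proposition \ref{orgi} are quasi-isometric to the quasi-orbits of $\alpha$, and these are quasi-isometric to a tree by the previous step. Thus $Y$ is a geodesic space quasi-isometric to a simplicial tree, that is, a quasi-tree, and therefore hyperbolic. Now $\beta$ is a genuine isometric action of $G$ on a hyperbolic space, so the fivefold Gromov classification applies to it.

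The key observation is that $\beta$ has no loxodromic elements: by Proposition \ref{qpres}(2) an element is loxodromic under $\beta$ if and only if it is loxodromic under $\alpha$, and by hypothesis none is. This immediately rules out types (3), (4) and (5), each of which contains a loxodromic element. The one remaining dangerous case is type (2) (parabolic), which can occur with all elements elliptic and so is not excluded by the no-loxodromic hypothesis alone. However $G$ is finitely generated and $Y$ is a quasi-tree, so Corollary \ref{nfgp} forbids a type (2) action. Therefore $\beta$ is of type (1), that is, it has bounded orbits, so $\beta$ is a bounded quasi-action; applying Proposition \ref{qpres}(1) to the reduction then gives that $\alpha$ is bounded as well.

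The main obstacle is precisely the parabolic case, since membership of type (2) is compatible with every element acting elliptically, so the hypothesis on its own does not force boundedness on a general hyperbolic space. What makes the corollary go through is that we have arranged to be on a quasi-tree, where finite generation rules out parabolic actions via Corollary \ref{nfgp} (which in turn rests on Proposition \ref{orblk}); everything else is a routine transfer of properties across the reduction.
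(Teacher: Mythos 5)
Your proof is correct and follows essentially the same route as the paper: pass to an isometric action on a hyperbolic quasi-tree, observe that the absence of loxodromics forces type (1) or (2) in the Gromov classification, and rule out type (2) via Corollary \ref{nfgp}. The only difference is that the paper obtains the isometric action in one step by applying Theorem \ref{gnact}(ii) to the whole space (which is already quasi-geodesic, being quasi-isometric to a tree), whereas you take a slightly longer detour through coarse connectedness of orbits and the reduction machinery of Lemma \ref{ccnq}(iii); both are valid.
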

\begin{proof}
  Change the quasi-action into an equivalent isometric action on some geodesic
  space by Theorem \ref{gnact} (ii). This new space
  is still quasi-isometric to a simplicial tree thus is a hyperbolic space.
Also  the isometry type of
every element will be preserved, so this action must be of hyperbolic
type (1) or (2). But type (2) is ruled out by Corollary \ref{nfgp}, thus
the original quasi-action was bounded too.
\end{proof}
{\bf Example}: We can see this corollary as a parallel result for quasi-actions
on quasi-trees of Serre's result mentioned earlier that a 
finitely generated group $\langle g_1,\ldots ,g_n\rangle$ acting on
a simplicial tree has a global fixed point if
each $g_i$ and $g_ig_j$ are elliptic. But there is no direct equivalent
of this result for quasi-trees, even for isometric actions.
This can be seen by quoting
\cite{balas} which implies that any acylindrically hyperbolic group has an
unbounded action on a quasi-tree (here the quasi-tree is a graph but is
not locally finite). Therefore we start with a hyperbolic
group generated by $x_1,\ldots ,x_n$ (say the free group $F_n$) and create
a non elementary hyperbolic quotient where all group words up to a given
length in $x_1,\ldots ,x_n$ have finite order, by quotienting out a very
high power of these elements. Thus the quotient will also have an unbounded
action on a quasi-tree but all of these elements will act elliptically.

We also note here some parallels and differences
between groups acting on quasi-trees
and groups acting on bounded valence (as opposed to locally finite)
hyperbolic graphs. There are also no parabolic isometries in the latter case
and recently it was shown in \cite{dmht} using Helly
graphs that a finitely generated group $G$ acting on any bounded valence
hyperbolic graph with every element elliptic
must act with bounded orbits. Thus Corollary \ref{nfgp} holds for these
spaces because the absence of parabolic elements means that every
element would be elliptic in a type (2) action, as does Corollary \ref{serc}
for these spaces in the case of isometric actions. However the implication
of (i) implies (ii) in Theorem \ref{sstt} as well as Corollary \ref{nopar}
are both false here, as can be seen
by taking a finitely generated subgroup of a hyperbolic group that is
not quasi-convex, say the fibre subgroup of a closed fibred hyperbolic
3-manifold group. The subgroup acts isometrically on a bounded valence
hyperbolic graph (the Cayley graph of the whole group) but with orbits
that are not quasi-convex and thus (by the Morse lemma) not quasi-geodesic
spaces.\\
\hfill\\
{\bf Example}: We also have that Corollary \ref{serc} fails for spaces that
are bounded valence hyperbolic graphs if we allow quasi-actions rather than
just actions.
For instance, \cite{watr} gives an isometric action of a 2-generator
group $G$ on the real hyperbolic space $\mathbb H^4$ which is a type
(2) action, hence unbounded, but where every element is elliptic. On
taking $G_0$ to be a cocompact lattice in $Isom(\mathbb H^4)$ and $\Gamma_0$
to be the Cayley graph of this lattice with respect to some finite
generating set,
we have that $\Gamma_0$ is a bounded valence hyperbolic graph which is
quasi-isometric to $\mathbb H^4$. Therefore our isometric action of $G$
on $\mathbb H^4$ is equivalent to an unbounded quasi-action of $G$ on $\Gamma_0$
where every element quasi-acts elliptically, but there is no equivalent
isometric action on $\Gamma_0$ itself. Indeed there is no reduction
to an isometric action on any bounded valence graph as this would
be hyperbolic, thus contradicting \cite{dmht}.

Corollary G of \cite{mrgo} states that if $X$ is a proper non elementary
hyperbolic space and we have a cobounded quasi-action $\alpha$
of $G$ on $X$
which does not fix a point of $\partial X$ then $\alpha$ is equivalent
to an isometric action on either a rank one symmetric space of non-compact
type or on a (connected) locally finite graph. However it can happen
that even if $X$ itself is a non elementary hyperbolic
graph of bounded valence, we can have a cobounded quasi-action $\alpha$
on $X$ which does not fix any point of $\partial X$ but where there is
no isometric action on $X$ or on any locally finite graph
which is equivalent to $\alpha$. Indeed this is the situation whenever
the quasi-action $\alpha$ contains a parabolic element, such as $PSL(2,\R)$
quasi-acting via Example \ref{qunew}
on the Cayley graph of a closed hyperbolic surface group which is 
quasi-isometric to $\mathbb H^2$. (For a finitely generated example,
we could take a cocompact lattice in $PSL(2,\R)$ thus giving us a
cobounded isometric action on $\mathbb H^2$ and then throw in a parabolic
element.)

We now consider quasi-isometries
between quasi-trees which are locally finite graphs equipped with
the path metric. In general there is no relationship between the valences
of quasi-isometric trees: for instance all regular trees of valence at least
3 are quasi-isometric and we can certainly have bounded valence trees which
are quasi-isometric to trees which are not of bounded valence (or not even
locally finite). If however we have some constraints on our trees
then the restrictions are much stronger. The following is the crucial lemma.
\begin{lem} \label{lfbd}
  Suppose that we have a $(K,\epsilon,C)$ quasi-isometry from a locally finite
  graph $\Gamma$ equipped with the path metric to some simplicial tree
  $T$ and which sends vertices to vertices.
  Then there exists a subtree $S$ of $T$ which is quasi-isometric
  to $T$ such that for any vertex $s\in S$ with valence $d_s$,
  there is a vertex $p_s$ of $\Gamma$ satisfying
  \[F(p_s)\in S\mbox{ and }  
  d_S(F(p_s),s)\leq K+\epsilon,\]
such that the closed ball in $\Gamma$ around $p_s$ of radius
$2K^2+3K\epsilon$ contains at least $d_s$ vertices.

Furthermore (assuming that $T$ is unbounded)
for any vertex $t\in T$, 
  there is a vertex $p_t$ of $\Gamma$ with  $d_T(F(p_t),t)\leq K+\epsilon$,
  such that the number of vertices in the closed ball in $\Gamma$
  around $p_t$ of
  radius $2K^2+3K\epsilon$ is at least the number of unbounded components
  of $T\setminus \{t\}$.
\end{lem}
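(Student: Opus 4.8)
The plan is to take $S$ to be the convex closure $CCl(F(V))$ of the image of the vertex set $V$ of $\Gamma$, which by the discussion preceding Theorem \ref{sstt} is a subtree of $T$ since $F(V)$ is a coarse connected set of vertices (being the image under the quasi-isometric embedding $F$ of the coarse connected set $V$). As $F$ is coarse onto and every point of $\Gamma$ lies within $1/2$ of a vertex, $F(V)$ is coarse dense in $T$; hence so is $S\supseteq F(V)$, and the inclusion $S\hookrightarrow T$ is a quasi-isometry. The single device I would use throughout is the following crossing principle: if $w_0,w_1,\ldots$ are vertices of $\Gamma$ with $d_\Gamma(w_{i-1},w_i)=1$ then $d_T(F(w_{i-1}),F(w_i))\leq K+\epsilon$, so the images form a coarse path in $T$ with jumps at most $K+\epsilon$; whenever such a path crosses a cut vertex $c$, the first image $F(w_k)$ to reach the far side has $c\in[F(w_{k-1}),F(w_k)]$ and therefore $d_T(F(w_k),c)\leq K+\epsilon$.

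Now fix a vertex $s\in S$. Since $s\in CCl(F(V))$ it lies on a geodesic $[F(a),F(b)]$ for some $a,b\in V$, and running the crossing device along a $\Gamma$-geodesic from $a$ to $b$ at the cut vertex $s$ produces a vertex $p_s$ with $F(p_s)\in F(V)\subseteq S$ and $d_S(F(p_s),s)=d_T(F(p_s),s)\leq K+\epsilon$ (the two metrics agree on the convex subtree $S$). Next, for each of the $d_s$ components of $S\setminus\{s\}$, equivalently each edge direction $u_j$ at $s$, minimality of the convex closure forces that component to contain a point $F(w_j)$ of $F(V)$ (otherwise deleting the component would leave a smaller convex set still containing $F(V)$). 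Applying the crossing device along a $\Gamma$-geodesic from $p_s$ to $w_j$, at the moment its image first enters the component of $T\setminus\{s\}$ containing $u_j$, I obtain a vertex $q_j$ with $F(q_j)$ in that component and $d_T(F(q_j),s)\leq K+\epsilon$.

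The $q_j$ lie in pairwise distinct components of $T\setminus\{s\}$, so their images, and hence the $q_j$ themselves, are distinct, giving $d_s$ distinct vertices. For each $j$ we have $d_T(F(p_s),F(q_j))\leq d_T(F(p_s),s)+d_T(s,F(q_j))\leq 2K+2\epsilon$, so the lower quasi-isometry bound gives $d_\Gamma(p_s,q_j)\leq K(2K+2\epsilon+\epsilon)=2K^2+3K\epsilon$. Thus all $d_s$ vertices lie in the closed ball of radius $2K^2+3K\epsilon$ about $p_s$, which establishes the first assertion.

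For the final statement, fix a vertex $t\in T$ and let $N$ be the number of unbounded components of $T\setminus\{t\}$. Coarse density of $F(V)$ shows that each unbounded component contains a point of $F(V)$: a point of the component far enough from $t$ has a nearby $F(V)$-point, which is then forced into the same component. When $N\geq 2$, choosing such points $F(w_1),F(w_2)$ in two distinct unbounded components places $t$ on $[F(w_1),F(w_2)]$, so $t\in S$ and the argument above applies verbatim to produce $p_t$ with $d_T(F(p_t),t)\leq K+\epsilon$ together with $N$ distinct vertices (one entering each unbounded component) in the ball of radius $2K^2+3K\epsilon$ about $p_t$; the case $N\leq 1$ needs only the single vertex $p_t$. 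I expect the main obstacle to be exactly this proximity step: it is the crossing device, rather than coarse surjectivity with its weaker constant $C$, that yields the sharp bound $K+\epsilon$, and it is available precisely because the relevant $t$ is straddled by two image vertices, which is automatic once $T\setminus\{t\}$ has at least two unbounded components. Careful bookkeeping of the constants in the crossing device is what produces the stated radius $2K^2+3K\epsilon$.
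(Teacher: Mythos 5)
Your proposal is correct and follows essentially the same route as the paper: $S$ is the convex closure of $F(V)$, the ``crossing principle'' is exactly the paper's transition-index argument along a $\Gamma$-geodesic whose image path (with jumps at most $K+\epsilon$) crosses the cut vertex, each component of $S\setminus\{s\}$ meets $F(V)$ by minimality of the convex closure, and the same constant chase gives $2K^2+3K\epsilon$. The caveat you flag about obtaining $d_T(F(p_t),t)\leq K+\epsilon$ when $T\setminus\{t\}$ has at most one unbounded component is a real edge case, but the paper's own proof glosses over it in the same way and it is harmless in the applications, where at least two unbounded components are always present.
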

\begin{proof}
  If $V$ is the vertex set of $\Gamma$ then the subtree $S$ of $T$ is
  defined simply as the convex closure of $F(V)$ in $T$. As $F(V)$ is
  coarse dense in $T$ so is $S$, hence it is quasi-isometric to $T$
  by inclusion. 

  Let us take any vertex $s\in S$.
  Removing $s$ from $T$ splits $T\setminus \{s\}$ into a number (possibly
infinite) of connected components. Consider such a component $C_0$
which intersects $S$. This means that there must be some vertex $v\in V$
with $F(v)$ lying in $C_0$, as otherwise $F(V)$ would lie in the
convex set $T\setminus C_0$ so $S$ would too. But similarly there must be
some vertex $w\in V$ with $F(w)$ lying outside $C_0$ (possibly $F(w)=s$)
as otherwise $C_0$ minus the open edge from $s$ to $C_0$ would also be convex
with $F(V)$ lying in it.

Thus take a geodesic in $\Gamma$ from $v$ to $w$ and let $v=v_0,v_1,\ldots ,
v_n=w$ be the vertices passed through in order. There exists $i$ such that
$F(v_i)$ is not in $C_0$ but $F(v_{i-1})$ is, so as $d_\Gamma(v_{i-1},v_i)=1$
we obtain
\[d_S(F(v_{i-1}),s)\leq d_S(F(v_{i-1}),F(v_i))\leq K+\epsilon\]
as $s$ is the nearest vertex in $T$ that is outside $C_0$.
Now each component $D$ of $S\setminus \{s\}$ corresponds to exactly
one component of $T\setminus\{s\}$ that meets $S$, so for each such $D$ we
can pick one vertex $u_D$ of $\Gamma$ with $F(u_D)$ in $D$
and with $d(F(u_D),s)\leq K+\epsilon$,
so that for components $D,D'$ we have
\[d_S(F(u_D),F(u_{D'}))\leq 2K+2\epsilon\mbox{ and hence }d_\Gamma(u_D,u_{D'})
  \leq 2K^2+3K\epsilon.\]
Now the set $\{u_D\}$ has cardinality equal to the number of components
of $S\setminus \{s\}$, which is just the valence $d_S$, and on taking
$p_S$ to be any element of this set $\{u_D\}$ we have that $d_S(F(p_S),s)\leq
K+\epsilon$ and $d_\Gamma(p_s,u_D)\leq 2K^2+3K\epsilon$, hence the lower
bound for the number of vertices in this closed ball.

We then run through the same argument in $T$ as opposed to $S$, where $t$ is
any vertex of $T$, and this time we consider the connected components of
$T\setminus\{t\}$. Although it could be that there are components $C_0$
which are disjoint from $F(V)$, this cannot occur if $C_0$ is unbounded
because $F$ is $C$-coarse onto. Thus on picking a vertex $v'$ in this
unbounded component $C_0$ that has distance more than $C$ from the vertex
$t$, we have that the closed ball of radius $C$ around $v'$ lies
completely in $C_0$. We now run through the rest of the argument, where
$D$ is now an unbounded component of $T\setminus \{t\}$.
\end{proof}

\begin{co} \label{lftr}
  A locally finite (respectively bounded valence) connected graph
$\Gamma$  equipped with the path metric which is quasi-isometric to some
simplicial tree $T$ is also quasi-isometric to a locally finite
(respectively
  bounded valence) simplicial tree.
\end{co}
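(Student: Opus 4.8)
The plan is to read the result off almost directly from Lemma \ref{lfbd}, whose first conclusion already controls the valences of the relevant tree in terms of ball sizes in $\Gamma$. The only preliminary is to get into a position where the lemma applies, and then to separate the two cases by an elementary count of vertices in a ball.

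First I would dispose of a trivial case. If $\Gamma$ is bounded then, being connected and (in either hypothesis) locally finite, it is a finite graph and hence quasi-isometric to a point, which is a bounded valence simplicial tree; so I may assume $\Gamma$ is unbounded, and therefore that $T$ is unbounded too. To invoke Lemma \ref{lfbd} I need a quasi-isometry that sends vertices to vertices: starting from the given quasi-isometry $\Gamma\to T$ I compose it with a nearest-vertex retraction of $T$ onto its vertex set, obtaining a $(K,\epsilon,C)$ quasi-isometry $F$ from $\Gamma$ to $T$ with $F$ carrying every vertex of $\Gamma$ to a vertex of $T$ (the constants change, which is immaterial). Note that in both cases $\Gamma$ is locally finite, which is exactly the standing hypothesis of the lemma.

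Next I apply Lemma \ref{lfbd} to $F$. This yields a subtree $S$ of $T$ which is quasi-isometric to $T$, and hence to $\Gamma$; moreover $S$ is a genuine simplicial tree, being the convex closure of a set of vertices. The decisive point is the first conclusion of the lemma: for every vertex $s\in S$ its valence $d_s$ is at most the number of vertices of $\Gamma$ lying in a closed ball of the \emph{fixed} radius $R=2K^2+3K\epsilon$. The two cases are now immediate. If $\Gamma$ is locally finite then every such ball is finite, so $d_s<\infty$ for all $s$ and $S$ is a locally finite tree. If $\Gamma$ has valence at most $N$ then each closed ball of radius $R$ contains at most some number $M=M(N,R)$ of vertices, a bound independent of $s$, so $S$ has valence at most $M$ and is a bounded valence tree. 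In either case $S$ is the desired simplicial tree quasi-isometric to $\Gamma$.

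I do not expect a genuine obstacle here: all the real content sits inside Lemma \ref{lfbd}, namely the passage from ``$s$ separates $S$ into $d_s$ components'' to ``the chosen preimages $u_D$ of those components crowd into a single ball of radius $R$ in $\Gamma$''. Granting that lemma, the corollary reduces to three routine checks: the bounded case, the vertex-to-vertex adjustment of the quasi-isometry, and the standard fact that closed balls of a fixed radius are finite in a locally finite graph and of uniformly bounded size in a bounded valence graph. I would finally remark that the ``furthermore'' clause of Lemma \ref{lfbd}, which bounds the number of unbounded components of $T\setminus\{t\}$, delivers the same conclusion phrased directly in terms of $T$ rather than of the auxiliary subtree $S$, but it is not needed for the argument above.
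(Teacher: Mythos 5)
Your proof is correct and follows essentially the same route as the paper: both apply Lemma \ref{lfbd} (after arranging a vertex-to-vertex quasi-isometry) and read off local finiteness, respectively bounded valence, of the subtree $S$ from the fact that its vertex valences are bounded by the number of $\Gamma$-vertices in a ball of fixed radius. Your extra remarks on the bounded case and the unneeded ``furthermore'' clause are harmless elaborations of the same argument.
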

\begin{proof}
We may assume that any quasi-isometry sends vertices to vertices.  
On applying Lemma \ref{lfbd} to $\Gamma$ and $T$, we obtain a simplicial
tree $S$ quasi-isometric to $\Gamma$ where the valence of a vertex $s\in S$
is bounded above by the number of vertices of $\Gamma$ lying in a closed
ball about some vertex $v$ of $\Gamma$ and of constant radius. This
number is finite if $\Gamma$ is
locally finite and is bounded above independently of $v$ if $\Gamma$ has
bounded valence.
\end{proof}

\begin{ex} \label{root}
  Take any tree $T$ where every vertex has valence at least
two (possibly infinite).
  Then $T$ is quasi-isometric to some locally finite (respectively bounded
  valence) graph $\Gamma$ if and only if it is locally finite
  (respectively has bounded valence) itself. This
is because on applying Lemma \ref{lfbd},
for any vertex $t$ of $T$ the number of unbounded components of
$T\setminus\{t\}$ is a lower bound for the number of vertices of $\Gamma$
lying in a ball of some fixed radius, but with centre a varying vertex of
$\Gamma$. But as $T$ has no leaves, the number of unbounded components
of $T\setminus \{t\}$ is just the valence of $t$ which must therefore
be finite. If further $\Gamma$ has bounded valence then this number
is bounded above as the centre varies, thus $T$ also has bounded valence.
\end{ex}

\section{Reducing our quasi-actions}

We are now in a position to apply the material in the last few sections
to the completely general case of an arbitrary quasi-action of any group on an
arbitrary space where the quasi-orbits look vaguely tree-like: that is there
is a quasi-isometric embedding of a quasi-orbit in some simplicial tree.
Our aim is to reduce this quasi-action to a cobounded quasi-action on a
bounded valence tree. Clearly a necessary condition from earlier is that
quasi-orbits must be coarse connected. Another necessary condition
is that as a bounded valence tree is proper, 
we will require that quasi-orbits quasi-isometrically embed in
some proper metric space (which need not be this simplicial tree).
This will be all we require.

\begin{thm} \label{gnthm}
  Let $\alpha$ be any quasi-action of any group $G$ on an arbitrary metric
  space $X$. Then $\alpha$ can be
  reduced to some cobounded quasi-action on a bounded valence simplicial
  tree if and only if the quasi-orbits ${\mathcal Q}(x_0)$ of $\alpha$
  quasi-isometrically embed in some
  proper metric space, quasi-isometrically embeds in some
  simplicial tree and are coarse connected.

  If $\alpha$ is itself a cobounded quasi-action then we can replace
  ``can be reduced to'' in the above with ``is equivalent to''.
\end{thm}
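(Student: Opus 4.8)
The plan is to handle the two implications separately. For necessity, suppose $\alpha$ reduces to a cobounded quasi-action $\beta$ on a bounded valence simplicial tree $T'$. Coboundedness makes every quasi-orbit of $\beta$ coarse dense in $T'$ and so quasi-isometric to $T'$, and Proposition \ref{orgi} then forces every quasi-orbit ${\mathcal Q}(x_0)$ of $\alpha$ to be quasi-isometric to $T'$ as well. Since a bounded valence tree is at once a simplicial tree, a proper metric space, and a geodesic (hence coarse connected) space, each of these three properties transfers to ${\mathcal Q}(x_0)$ under this quasi-isometry, giving exactly the three stated conditions.

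For sufficiency, I would first strengthen the hypothesis ``quasi-isometrically embeds into a tree'' into ``quasi-geodesic and quasi-isometric to a tree''. If ${\mathcal Q}(x_0)$ embeds quasi-isometrically into a simplicial tree $T$ with image $S$ (taken with the subspace metric), then ${\mathcal Q}(x_0)$ is quasi-isometric to $S$, and coarse connectedness passes to $S$ because quasi-isometries preserve it. Applying Theorem \ref{sstt} to the subset $S\subseteq T$ then shows that $S$ is a quasi-geodesic space which is quasi-isometric to some simplicial tree, and transporting back makes ${\mathcal Q}(x_0)$ a quasi-geodesic space that is itself quasi-isometric to a tree.

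Now the quasi-orbits are quasi-geodesic spaces that also quasi-isometrically embed into a proper metric space, which is precisely the input of Corollary \ref{pracb}. This yields a bounded valence connected graph $\Gamma'$ quasi-isometric to ${\mathcal Q}(x_0)$ together with a cobounded quasi-action $\beta$ of $G$ on $\Gamma'$ that is a reduction of $\alpha$, and an equivalence if $\alpha$ is already cobounded. As $\Gamma'$ is quasi-isometric to ${\mathcal Q}(x_0)$, which in turn is quasi-isometric to a tree, $\Gamma'$ is a bounded valence graph quasi-isometric to a simplicial tree, so Corollary \ref{lftr} supplies a bounded valence simplicial tree $T'$ quasi-isometric to $\Gamma'$. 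Transferring $\beta$ across this quasi-isometry via Example \ref{qunew} produces a cobounded quasi-action $\beta'$ on $T'$ that is equivalent to $\beta$, and hence a reduction of $\alpha$ by transitivity of reduction; when $\alpha$ is cobounded both the reduction to $\beta$ and the passage to $\beta'$ are equivalences, so $\alpha$ is equivalent to $\beta'$.

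Since every step cites a previously established result, there is no serious obstacle; the one point needing care is the bookkeeping of the reduction and equivalence relations through the final transfer, where one must note that composing the reduction from $\alpha$ to $\beta$ with the two-sided equivalence between $\beta$ and $\beta'$ gives a reduction from $\alpha$ to $\beta'$, which upgrades to a full equivalence exactly when the initial reduction from $\alpha$ to $\beta$ is one.
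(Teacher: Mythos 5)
Your proposal is correct and follows essentially the same route as the paper: necessity via Proposition \ref{orgi} together with the observation that quasi-orbits of a cobounded quasi-action on a bounded valence tree inherit all three properties, and sufficiency by using Theorem \ref{sstt} to upgrade the coarse connected embedded image to a quasi-geodesic space quasi-isometric to a tree, then Corollary \ref{pracb} to reduce to a cobounded quasi-action on a bounded valence graph, and finally Corollary \ref{lftr} with Example \ref{qunew} to transfer to a bounded valence tree. The bookkeeping of reductions versus equivalences in the cobounded case matches the paper's treatment.
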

\begin{proof}
  First note that by Proposition \ref{orgi}, whenever we 
  reduce some quasi-action $\alpha$ to another quasi-action, any quasi-orbit
  continues to be coarse connected and to quasi-isometrically embed in both
  a proper metric space and a tree. In particular the conditions given on
  ${\mathcal Q}(x_0)$ are necessary for such a quasi-action to exist.

  Now suppose that these conditions do hold.
  The image in this tree of our quasi-orbit ${\mathcal Q}(x_0)$ will also
  be coarse connected, thus by Theorem \ref{sstt} ${\mathcal Q}(x_0)$ will
itself be quasi-isometric to a tree and hence is
  a quasi-geodesic space. Thus by Corollary \ref{pracb} we obtain 
  a reduction of $\alpha$ to a cobounded quasi-action $\beta$ on some
  connected graph $\Gamma$ of bounded valence, which is an equivalence if
  $\alpha$ is cobounded. Moreover $\Gamma$ is quasi-isometric to a
  quasi-orbit, so also to some simplicial
  tree $T$. Then by Corollary \ref{lftr} we can take this tree $T$ to be have
  bounded valence and we can again transfer our quasi-action $\alpha$ to an
  equivalent one on $T$ which is still cobounded.
  
  If $\alpha$ is cobounded then any reductions in this proof will be
  equivalences, thus we will end up with an equivalent quasi-action.   
\end{proof}  

\section{Unbounded orbits on quasi-trees}

So far we have taken quasi-actions whose quasi-orbits are 
quasi-isometric to trees and which quasi-isometrically embed in proper
spaces. We have reduced these to cobounded quasi-actions
on bounded valence trees,
but we have not turned any of these into genuine isometric actions.
This will be done for us by applying the following strong result.
\begin{thm} \label{mswt} (\cite{msw} Theorem 1)\\
If $G\times T\rightarrow T$ is a cobounded quasi-action of a group $G$
on a bounded valence bushy tree $T$, then there is a bounded valence, bushy
tree $T'$, an isometric action $G\times T'\rightarrow T'$, and a
coarse $G$-equivariant quasi-isometry from the action of $G$ on $T'$ to
the quasi-action of $G$ on $T$.
\end{thm}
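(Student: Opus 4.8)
The plan is to recover the tree from its coarse \emph{separation structure} and then straighten this into an honest $G$-tree, exploiting the fact that a simplicial tree is completely determined by the poset of halfspaces cut out by its edges. First I would pass to the Gromov boundary: since $T$ is a bounded valence bushy tree it is $0$-hyperbolic with $\partial T$ a Cantor set, and bushiness guarantees (in this cobounded setting) that $\partial T$ is uniformly perfect, with no isolated behaviour at any scale. The uniform quasi-isometry constants of the maps $A_g$ ensure that the quasi-action extends to an action of $G$ on $\partial T$ by uniformly quasi-M\"obius homeomorphisms with respect to a visual metric; this boundary action is the invariant that I expect to survive the passage from quasi-action to action.

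Each oriented edge $e$ of $T$ determines a halfspace (one component of $T\setminus e$) whose set of ends is a clopen \emph{shadow} in $\partial T$, and the collection of these halfspaces forms a nested $G$-invariant pocset whose dual tree is $T$ itself. The difficulty is that a quasi-isometry $A_g$ does not send halfspaces to halfspaces: the image $A_g(h)$ is only a \emph{coarse} halfspace, i.e.\ a subset whose coarse frontier is bounded and which coarsely separates $T$, and the relation induced on shadows is only coarsely nested. So I would work with an appropriate coarsening, passing to classes of subsets of $\partial T$ that agree up to bounded modification, on which $G$ now acts honestly, and then attempt to select from each class a genuine clopen set so that the selected family is $G$-equivariant and honestly nested.

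This straightening is where I expect the real work to lie, and it is the main obstacle: one must convert a coarsely nested, $G$-coarsely-invariant family of coarse halfspaces into an honest $G$-invariant nested pocset of clopen subsets of the Cantor set that reproduces the original coarse data. Here all three hypotheses must be used together. Uniform perfectness coming from \emph{bushiness} supplies enough walls at every scale (and rules out the degenerate two-point boundary, i.e.\ the line case that the surrounding paper must treat separately); \emph{bounded valence} together with \emph{coboundedness} bound the number of walls crossing a given one and force only finitely many $G$-orbits of walls, so that the resulting pocset is discrete and locally finite. The uniform quasi-M\"obius estimates are precisely what let one make the selection consistently across a whole $G$-orbit rather than one element at a time.

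Finally I would take $T'$ to be the tree dual to this honest nested pocset (the nested case of Sageev's construction), on which $G$ then acts by simplicial isometries. Local finiteness and finitely many orbits give bounded valence, and the perfectness of $\partial T$ forces $T'$ to be bushy. The last step is to exhibit the coarse $G$-equivariant quasi-isometry from $T'$ to $T$: a vertex of $T'$ is a consistent choice of side for every wall, which singles out a region of $T$ of bounded diameter (a coarse median of the halfspace data), and one checks directly from the construction that the resulting map is a quasi-isometry and is equivariant up to bounded error. The error bounds are uniform exactly because of the constants extracted in the straightening step.
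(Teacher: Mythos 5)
You should first note that the paper does not prove this statement at all: it is imported verbatim as \cite{msw} Theorem 1, and the author explicitly says elsewhere that he does not vary its proof (and remarks in Section 10 that the original argument ``utilises Dunwoody tracks''). The published proof runs by converting the quasi-action into a genuine action on a $2$-complex quasi-isometric to $T$ and then finding a $G$-invariant pattern of Dunwoody tracks cutting that complex into uniformly bounded pieces; the dual tree of the track pattern is $T'$. Your boundary/pocset strategy is therefore a genuinely different route, and the early steps of it are sound: $\partial T$ is a Cantor set, the quasi-action does extend to an honest action by uniformly quasi-M\"obius homeomorphisms on $\partial T$, and a subset of $T$ with bounded coarse frontier does determine a well-defined clopen shadow, so $G$ acts honestly on clopen subsets of $\partial T$ and the shadows of halfspaces are well-defined invariants of the coarse data.

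However, there is a genuine gap, and you have located it yourself without closing it: the ``straightening'' step. The $G$-orbit of the family of edge-shadows is automatically $G$-invariant (since the boundary action is honest), but it is emphatically \emph{not} nested --- two clopen sets in the orbit can cross --- and Sageev's construction applied to a non-nested pocset yields a CAT(0) cube complex, not a tree. Conversely, any nested subfamily you try to select must be simultaneously $G$-invariant, locally finite, and rich enough to separate ends at every scale, and producing such a family from coarsely nested data is not a routine selection argument: it is essentially equivalent to the theorem itself. (Indeed, if such an equivariant straightening were formal, the quasi-morphism examples in the lineal case would not obstruct anything; it is precisely the failure of coarse data to rigidify equivariantly that makes the two-ended case exceptional in this paper, and your appeal to bushiness to exclude that case does not by itself explain why straightening succeeds when $\partial T$ is a Cantor set.) As written, the proposal substitutes a named hope (``one must convert a coarsely nested family into an honest nested pocset'') for the argument that the track machinery of Mosher--Sageev--Whyte actually supplies, so it cannot be accepted as a proof. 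If you want to pursue this route you would need either a concrete combinatorial resolution procedure for crossing pairs of shadows that is uniform over the $G$-orbit, or an argument that the dual cube complex of the non-nested pocset is itself quasi-isometric to $T$ and admits a $G$-invariant tree quotient --- both of which are substantial undertakings, not finishing touches.
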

The only term in this statement which needs defining is that of a
{\bf bushy} tree, which is a simplicial tree $T$ 
where there exists $b\geq 0$ (the bushiness
constant) such that for any point $x\in T$,
$T\setminus B(x,b)$ has at least three unbounded
components. This property can be seen to be preserved by quasi-isometries
between bounded valence trees, thus the second mention of bushy when
describing $T'$ is
strictly speaking redundant. We first look to see when we have a bushy tree. 
\begin{prop} \label{mswp}
Suppose that a group $G$ has a cobounded quasi-action on a
bounded valence tree $T$. If $T$ has at least 
three ends then $T$ is a bushy tree. 
\end{prop}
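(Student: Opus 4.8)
The plan is to exploit coboundedness to transport a single three-pronged configuration to every point of $T$, and then to use the fact that $T$ is a tree (hence $0$-hyperbolic) to convert coarse three-prongedness at an arbitrary point into three genuine unbounded complementary components of a ball of one fixed radius.

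First I would record a base configuration. Since $T$ has at least three ends, I choose three distinct ends and let $p_0$ be their median, so that the three geodesic rays $\rho_1,\rho_2,\rho_3$ from $p_0$ towards these ends leave $p_0$ in distinct directions. The point of this choice is that for far points $a\in\rho_i$ and $c\in\rho_j$ with $i\neq j$, the unique geodesic $[a,c]$ in $T$ passes through $p_0$, since $a$ and $c$ lie in different components of $T\setminus\{p_0\}$; this is the only place the three-ends hypothesis is used. Next I would transport this configuration. Let $(K,\epsilon,C)$ be the constants of the quasi-action $\alpha$ and let $D$ be a coarse-density constant for the quasi-orbit $\mathcal{Q}(p_0)$, which exists and is uniform by Definition \ref{dfcb}. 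Given an arbitrary $x\in T$, I pick $g\in G$ with $d_T(\alpha(g,p_0),x)\leq D$ and set $A_g=\alpha(g,\cdot)$, a $(K,\epsilon)$-quasi-isometric embedding whose constants do not depend on $g$. The three quasi-rays $A_g(\rho_i)$ are then unbounded and emanate from within $D$ of $x$.

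The key step is to turn this into genuine branching. For far $a\in\rho_i$, $c\in\rho_j$ the image $A_g([a,c])$ is a $(K,\epsilon)$-quasi-geodesic, so by stability of quasi-geodesics in the $0$-hyperbolic space $T$ it stays within a constant $\mu=\mu(K,\epsilon)$ of the true geodesic $[A_g(a),A_g(c)]$. Since $p_0\in[a,c]$ and $d_T(A_g(p_0),x)\leq D$, this geodesic comes within $b:=D+\mu$ of $x$. I then take $b$ as the bushiness constant. Because a connected subset of a tree is convex, if the tails of $A_g(\rho_i)$ and $A_g(\rho_j)$ lay in the same component of $T\setminus B(x,b)$ that component would contain the geodesic joining two of their points, contradicting that this geodesic meets $B(x,b)$; hence the three tails lie in three distinct components. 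A complementary distance estimate (a subsegment of a single ray $\rho_i$ stays far from $p_0$, so its image stays far from $x$) shows each such component is unbounded. As $x$ was arbitrary and $b$ is uniform, $T$ is bushy with constant $b$.

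I expect the main obstacle to be exactly this last step: passing from ``the three image rays are pairwise coarsely separated near $x$'' to ``they span three distinct unbounded components of $T\setminus B(x,b)$ for one and the same $b$ at every $x$''. The uniformity of $b$ rests on the quasi-action having uniform quasi-isometry constants together with a single coboundedness constant $D$, while convexity of connected subsets of a tree, combined with stability of quasi-geodesics, is what prevents the transported rays from re-merging outside the ball. Bounded valence of $T$ plays no role in this core argument---it is the hypothesis relevant to the subsequent application of Theorem \ref{mswt}---though it does ensure that $T$ is proper and that its ends behave well.
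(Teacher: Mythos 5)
Your argument is correct. Note first that the paper does not actually write out a proof here: it simply cites (the proof of) \cite{mngeom} Theorem 4.20, so you have in effect supplied the argument that the paper leaves to Manning, and your route is the natural one. The three ingredients you isolate are exactly what is needed: a median $p_0$ of three ends through which every geodesic $[a,c]$ with $a,c$ on distinct rays must pass; coboundedness giving a uniform $D$ and uniform quasi-isometry constants for the maps $A_g$, so that the configuration can be transported to within $D$ of any $x$; and the Morse lemma in the $0$-hyperbolic space $T$ together with convexity of balls and of complementary components, which converts the coarse picture into genuine separation by $B(x,b)$ with $b=D+\mu$ independent of $x$. The one point I would ask you to make fully explicit is that each transported tail lies in a \emph{single} unbounded component of $T\setminus B(x,b)$, not merely that the three tails are pairwise separated: for $a_t,a_s$ far out on the same ray $\rho_i$, you need the geodesic $[A_g(a_t),A_g(a_s)]$ to avoid $B(x,b)$, which follows from your parenthetical estimate (every point of $[a_t,a_s]$ is far from $p_0$, hence its image is far from $x$, hence by Morse the true geodesic stays outside the ball) once the tail parameter is chosen larger than a threshold depending only on $K,\epsilon,D,\mu,b$. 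With that spelled out the proof is complete, and your closing observation is also accurate: bounded valence plays no role in this proposition itself but is needed for the subsequent appeal to Theorem \ref{mswt}.
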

\begin{proof}
This is (the proof of) \cite{mngeom} Theorem 4.20.
\end{proof}
Moreover as stated in \cite{msw},
any two bushy bounded valence trees are quasi-isometric (a proof follows
from \cite{lndq} Lemma 2.6). As for quasi-trees,
a useful and well known criterion to tell whether a geodesic metric
space $X$ is a quasi-tree is Manning's bottleneck criterion (see \cite{mnlms}).
This states that there is some number $C\geq 0$ (the bottleneck constant)
such that for every geodesic segment $[x,y]$ in $X$ and any point $z$ on
$[x,y]$, any path between $x$ and $y$ must intersect the closed ball
$B(z,C)$. Combining this with the above gives us what might be called the
``bushy bottleneck'' criterion
for when a bounded valence graph (equipped with the path metric) is
quasi-isometric to the 3-regular tree.
\begin{prop} A connected graph $\Gamma$ of bounded valence is quasi-isometric
to the 3-regular tree $T_3$ if and only if there is a constant $C\geq 0$
such that for all $z\in\Gamma$, we have $\Gamma\setminus B(z,C)$ has
at least three unbounded components and also for every geodesic segment
$[x,y]$ in $\Gamma$ and any point $z$ on $[x,y]$, any path in $\Gamma$
between $x$ and $y$ must intersect the closed ball $B(z,C)$. 
\end{prop}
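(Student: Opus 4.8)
The plan is to recognise the two hypotheses as encoding, respectively, that $\Gamma$ is a quasi-tree (the second condition) and that it is \emph{bushy} (the first condition), and then to run the two facts already assembled in this section: that any two bushy bounded valence trees are quasi-isometric, so that each is quasi-isometric to $T_3$, and that a bounded valence connected graph which is a quasi-tree is quasi-isometric to a bounded valence tree by Corollary \ref{lftr}. Throughout I would assume, as in Lemma \ref{lfbd}, that quasi-isometries send vertices to vertices.

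First I would treat the reverse implication. Suppose both conditions hold. The second condition is exactly Manning's bottleneck criterion, so $\Gamma$ is a quasi-tree; being a bounded valence connected graph, Corollary \ref{lftr} then supplies a bounded valence simplicial tree $T$ together with a quasi-isometry $F:\Gamma\to T$. It remains only to show that $T$ is bushy, for then $T$, and hence $\Gamma$, is quasi-isometric to $T_3$. But the first condition is precisely the statement that there is a uniform $C$ for which the complement of every $C$-ball in $\Gamma$ has at least three unbounded components, and I would transfer this property across $F$ to $T$, enlarging the radius by the quasi-isometry constants; this is exactly the bushiness of $T$.

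The forward implication runs the same two facts backwards. If $\Gamma$ is quasi-isometric to $T_3$ then, being quasi-isometric to a tree, $\Gamma$ is a quasi-tree, and so by Manning's bottleneck criterion it satisfies the second condition. Likewise $T_3$ is bushy, and transferring the three-unbounded-components property back along the quasi-isometry (again enlarging radii by the constants) would yield the first condition for $\Gamma$.

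The one point needing real care, and the step I expect to be the main obstacle, is the claim that the ``complement of every ball has at least three unbounded components, with radius uniform in the centre'' property is a quasi-isometry invariant \emph{among bounded valence spaces}. The bounded valence hypothesis is essential: a $(K,\epsilon)$-quasi-isometry is controllable here only because balls map into balls of comparable radius and, crucially, because unbounded components map to unbounded components with neither an uncontrolled number of them coalescing nor spurious ones appearing. Concretely I would track the three far-separated unbounded ``directions'' out of a given ball, show their images remain in distinct unbounded components of the complement of a suitably enlarged ball about the image centre, and use coarse surjectivity to see that no unbounded component of the target is missed. The bookkeeping is of exactly the flavour already carried out in Lemma \ref{lfbd}, which relates the unbounded components of $T\setminus\{t\}$ to the vertices of $\Gamma$ in a ball of fixed radius, so that machinery can be invoked to make the counting rigorous.
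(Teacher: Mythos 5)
Your proposal is correct and follows essentially the same route as the paper: use the bottleneck criterion plus Corollary \ref{lftr} to get a bounded valence tree, transfer the three-unbounded-components condition across the quasi-isometry to conclude bushiness, and invoke the fact that bushy bounded valence trees are all quasi-isometric to $T_3$; the converse is the same argument reversed. The paper simply asserts that the combined criterion ``can be seen to be a quasi-isometry invariant amongst bounded valence graphs,'' whereas you correctly isolate this as the one step needing care and sketch how to verify it.
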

\begin{proof} This criterion can be seen to be a quasi-isometry invariant
  amongst bounded valence graphs
  and is possessed by $T_3$. Now suppose we have a bounded valence graph
  $\Gamma$ satisfying this condition. Then $\Gamma$ is quasi-isometric to
  a tree by the bottleneck criterion and thus quasi-isometric to a
  bounded valence tree $T$ by Corollary \ref{lftr}. But $T$ also satisfies
  our ``bushy criterion for graphs'' as it is a quasi-isometric invariant,
  so $T$ is a bounded valence bushy tree which is therefore quasi-isometric
  to $T_3$ as mentioned above, hence so is $\Gamma$.
\end{proof}

We can now use this result and our theorem from the previous section
in combination with Theorem \ref{mswt}.
\begin{thm} \label{main}
Let $\alpha$ be any quasi-action with coarse connected
quasi-orbits of any group $G$ on an arbitrary metric space $X$.
Suppose that the quasi-orbits of $\alpha$ both quasi-isometrically embed
into a tree and quasi-isometrically embed into a proper metric space.
  Then exactly one of the following three cases occurs:\\
$\bullet$
$\alpha$ 
  reduces to some cobounded isometric action on a bounded valence simplicial
  tree which is bushy. This occurs if and only if quasi-orbits of
  $\alpha$ are not bounded subsets or quasi-isometric to $\R$.\\
$\bullet$
or $\alpha$ reduces to some cobounded quasi-action on the real line.   
This occurs if and only if quasi-orbits of $\alpha$ are quasi-isometric to
$\R$.\\
$\bullet$
or $\alpha$ reduces to the
trivial isometric action on a point. This occurs if and only if
quasi-orbits of $\alpha$ are bounded subsets.
In each of these three cases, this reduction is
  an equivalence of quasi-actions if and only if $\alpha$ is cobounded.
\end{thm}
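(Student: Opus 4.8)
The plan is to discard the ambient space $X$ immediately and work with a cobounded tree model. Since the quasi-orbits are coarse connected, quasi-isometrically embed into a tree, and quasi-isometrically embed into a proper metric space, Theorem \ref{gnthm} applies verbatim and produces a reduction of $\alpha$ to a cobounded quasi-action $\beta$ on a bounded valence simplicial tree $T$, with this reduction being an equivalence exactly when $\alpha$ is cobounded. By Proposition \ref{orgi} every quasi-orbit of $\beta$ is quasi-isometric to every quasi-orbit of $\alpha$, and coboundedness makes these quasi-isometric to $T$ itself. From here the whole problem is about the quasi-isometry type of the single bounded valence tree $T$ carrying the cobounded quasi-action $\beta$, and the task is to split this type into the three advertised possibilities.

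First I would peel off the bounded case: if $T$ is a bounded set then $\beta$, and hence $\alpha$, is bounded, and the constant map to a point is a coarse $G$-equivariant quasi-isometric embedding, giving the reduction to the trivial action on a point. So assume $T$ is unbounded. To exploit the classification of Section 5 I would replace $\beta$ by an equivalent cobounded \emph{isometric} action on a geodesic space $Z$ via Theorem \ref{gnact}(ii); as $Z$ is quasi-isometric to the quasi-tree $T$ it is hyperbolic, and coboundedness gives $\partial_G Z=\partial Z$, which is simply the end space of $T$. I would then run the Gromov classification (1)--(5) for this isometric action, reading the output off the size of $\partial Z$ and transporting each conclusion back to $\alpha$ by composing reductions, with Proposition \ref{orgi} guaranteeing that the quasi-orbit type is never changed (and Proposition \ref{qpres} recording that element types survive the chain).

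The core is the case analysis. Type (2) is impossible, and this is precisely what excludes $T$ having a single end: coboundedness makes every orbit quasi-isometric to the geodesic space $Z$, hence a quasi-geodesic space, contradicting Proposition \ref{orblk}. Type (1) is the bounded case already handled. In Type (3) the limit set equals $\partial Z$ and has two points while the orbits are quasi-geodesic spaces, so by the two-limit-point characterisation the quasi-orbits are quasi-isometric to $\R$; transferring $\beta$ across this quasi-isometry by Example \ref{qunew} yields a cobounded quasi-action on the real line, and composition gives the reduction of $\alpha$. In Types (4) and (5) the limit set is infinite, so $T$ has at least three ends; Proposition \ref{mswp} then shows $T$ is bushy, and Theorem \ref{mswt} converts the cobounded quasi-action $\beta$ on this bounded valence bushy tree into an equivalent cobounded isometric action on a bounded valence bushy tree $T'$, delivering the first case.

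Finally I would settle exclusivity and the biconditionals cheaply. A point, $\R$, and a bushy bounded valence tree are pairwise non-quasi-isometric (bounded versus two-ended versus infinitely-ended), and a reduction never alters the quasi-isometry type of a quasi-orbit by Proposition \ref{orgi}; hence at most one case holds, and since one always does, exactly one holds, with the stated description of which. For the last sentence, each of the three target quasi-actions is itself cobounded, so when $\alpha$ is cobounded the reductions produced above are already equivalences, whereas if a reduction to a cobounded target is an equivalence then $\alpha$ inherits coboundedness through Proposition \ref{qpres}(4) applied to a quasi-inverse. The genuine obstacle I anticipate is the Type (3) step: unlike the bounded and bushy cases it is not formal, and one must use coboundedness (coarse homogeneity) to upgrade ``exactly two ends'' to ``quasi-isometric to $\R$'', since a general bounded valence two-ended tree --- a line carrying branches of unbounded length, say --- need not be quasi-isometric to $\R$ at all.
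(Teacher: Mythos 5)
Your proposal is correct and follows essentially the same route as the paper's proof: reduce via Theorem \ref{gnthm} to a cobounded quasi-action on a bounded valence tree, peel off the bounded case, use the Morse lemma together with coboundedness to identify the two-ended case with $\R$, and feed the at-least-three-ends case into Proposition \ref{mswp} and Theorem \ref{mswt}, with exclusivity and the coboundedness clause handled exactly as you describe via Proposition \ref{orgi} and Proposition \ref{qpres}. The only cosmetic difference is that you organise the trichotomy through the Gromov classification of an equivalent isometric action on a geodesic space $Z$ (excluding type (2) by Proposition \ref{orblk}), whereas the paper counts ends of $T$ directly and rules out the no-loxodromic case with Corollary \ref{serc}; the underlying facts invoked are the same.
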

\begin{proof}
  We apply Theorem \ref{gnthm} to obtain a reduction of $\alpha$ to
  (or an equivalent quasi-action if and only if $\alpha$ is cobounded)
  a cobounded quasi-action $\beta$ on a bounded valence simplicial tree
  $T$. If $T$ is bounded then we can take it to be a point and this
  occurs if and only if $\alpha$ is a bounded quasi-action.

Otherwise there must exist an element which quasi-acts loxodromically under
$\beta$, because if not then Corollary \ref{serc} (which applies to
quasi-actions of groups that need not be finitely generated as long as
the quasi-orbits are coarse connected) says that the quasi-action
  is bounded but it is also cobounded, thus $T$ would be a bounded tree.
In particular the case when $T$ has one end does not occur
  because we must have a quasi-isometrically embedded
  copy $S$ say of $\Z$ in $T$ and hence also of $\R$. In the two ended
  case any self $(K,\epsilon,C)$ quasi-isometry of $T$ will send $S$
  to a set with the same endpoints which is at a bounded
  Hausdorff distance from $S$, by the Morse lemma.
  But our quasi-action $\beta$ is cobounded, therefore
  any point of $T$ is also at a bounded distance from $S$. Hence $T$
  is quasi-isometric to $\R$ and so we can assume $T=\R$ by transferring
 the quasi-action. 

Otherwise $T$ has at least three ends and $\beta$ is a cobounded quasi-action,
so Proposition \ref{mswp} applies, allowing us to use Theorem \ref{mswt}
to obtain a reduction of $\alpha$ to a genuine isometric action on some
bounded valence, bushy tree (which is an equivalence if $\alpha$ was
itself cobounded).
\end{proof}

Note that each of the three conditions in the statement of Theorem
\ref{main} can nullify the conclusion independently from the other two.
Without the
proper condition, \cite{balas} says that any acylindrically hyperbolic group
has a non elementary cobounded action on a quasi-tree. This quasi-tree, and
hence the orbits, will not be bounded or quasi-isometric to $\R$
and there are certainly such groups which have no unbounded isometric action
on any simplicial tree. Also the action of such a group on its Cayley graph
will have orbits which are coarse connected and proper, but not bounded
or quasi-isometric to $\R$. As for removal of the coarse connected condition,
Lemma \ref{ccnq} (ii) tells us that we will not be able to reduce such an
action to a cobounded quasi-action on any quasi-geodesic space.
We have already seen this in Example 3 in Section 5. Note that here the
orbits isometrically embed in a bounded valence tree and are themselves
proper spaces (as they are closed in this tree). Moreover the action is
metrically proper, so this example is as nice as it is possible to be
without having coarse connected orbits, yet the conclusion utterly fails.

\section{When quasi-orbits look like $\R$}

\subsection{Definitions}

In Theorem \ref{main} we took any quasi-action where the 
quasi-orbits were quasi-isometric to trees and could be quasi-isometrically
embedded in some proper metric space. We were able to reduce this
quasi-action
to a cobounded isometric action on a bounded valence tree, unless
the quasi-orbits were quasi-isometric to $\R$.
Now suppose that we have any quasi-action of an arbitrary group on
an arbitrary space where the quasi-orbits are quasi-isometric to $\R$.
This case is still covered by Theorem \ref{main}, giving us
a reduction to a cobounded quasi-action on $\R$. But as the quasi-orbits
are quasi-geodesic spaces, Lemma \ref{ccnq} (iii) tells us that we also have
some reduction to a cobounded isometric action on a hyperbolic space
which is quasi-isometric to $\R$. The point is that, with the lack of
an equivalent version of Theorem \ref{mswt} for these quasi-actions, we either
have to take the advantage of an isometric action or the space being
$\R$, but seemingly we cannot have both together.

We will deal with this case here, where we will see that there is
a genuine obstruction to reducing our quasi-action to an isometric action
on $\R$.
As an indication of what we should expect to see, a group $G$ acting
on $\R$ by orientation preserving isometries is exactly a homomorphism
from $G$ to $\R$. It was pointed out
in \cite{mnlms} that if instead we take a group $G$ and
a quasi-morphism $f:G\rightarrow\R$,
meaning that there is $D\geq 0$ 
such that for all $g_1,g_2\in G$ we have
$|f(g_1g_2)-f(g_1)-f(g_2)|\leq D$ (we call $D$ a defect of $f$)
then we obtain
a quasi-action of $G$ by translations on $\R$ by setting
$\alpha(g,x)=x+f(g)$. Indeed if we think of an orientation preserving
quasi-action of $\R$ as one where every map preserves each end of $\R$
then the orientation preserving $(1,0,C)$ quasi-actions of $G$ on $\R$
are precisely the quasi-morphisms $f:G\rightarrow\R$ of defect at
most $C$. We refer to this
example of a quasi-action as $G$ {\bf translating by} $f$ on $\R$. Note
that if $f$ is a genuine homomorphism
then this is an isometric action of $G$
on $\R$ and every orientation preserving isometric action of $G$ on $\R$
occurs in this way (and this action will be of type (3)$^+$ unless the
homomorphism is trivial).

Thus we begin this section by looking at isometric actions on hyperbolic
spaces $X$ where the orbits are quasi-isometric to $\R$. In fact
these are exactly
the hyperbolic actions of type (3) because an action of this type will
contain a loxodromic element and we can apply the Morse lemma: indeed
we will obtain a coarse equivariant version of this in Theorem \ref{twchr}
(for actions of type (3)$^+$) and Corollary \ref{rvrs} (for type (3)$^-$).
For the converse direction, orbits will not be bounded but are quasi-geodesic
spaces, so this action is not of type (1) nor of type (2) by Proposition
\ref{orblk}. However this orbit can only have two accumulation points on
the boundary $\partial X$ so the action cannot be of types (4) or (5).

So we assume that we have any type (3) action of an arbitrary group
$G$ on an arbitrary hyperbolic space $X$.
By definition $G$ fixes the subset $\{\zeta^+,\zeta^-\}$ of the boundary
$\partial X$ where the two limit points of the action
are $\zeta^\pm$ and therefore
$G$ either fixes or swaps the individual points. We now assume that $G$ is of
type (3)$^+$ (namely they are fixed)
because either $G$ or an index two subgroup will be of this form
and we will deal the case where they are swapped later in this section.

\subsection{Orientation preserving actions}
Now that we know $G$ fixes (say) $\zeta^+$ pointwise, we can bring in
Busemann functions. The relevant properties and results that we require
can be found in various places, for instance \cite{cdcmt} Section 3,
\cite{mnarth} Section 4 and \cite{bargen} Section 2 (however we warn that our
definition is actually minus their definition for reasons that will become clear
as we progress).

First of all, let $X$ be a hyperbolic space and let us take a
point $\zeta$ in the Gromov boundary $\partial X$ and a sequence
${\bf x}=(x_n)$ of points in $X$ tending to $\zeta$ in $X\cup\partial X$.
\begin{defn} (\cite{mnarth} Definition 4.3) \label{dbus}
  The {\bf quasi-horofunction} $\eta_{\bf x}:X\rightarrow\mathbb R$ of the
  sequence
  $\bf x$ is the  function defined by
\[\eta_{\bf x}(z)=\liminf_{n\rightarrow\infty} \Big(d_X(x_n,x_0)-d_X(x_n,z)
    \Big).\]
\end{defn}
Note that the modulus of the bracketed expression is bounded above
by $d_X(x_0,z)$ so that the lim inf is finite.
We then have the following:\\
$\bullet$
If ${\bf y}=(y_n)$ is another sequence tending to the same point $\zeta$
on $\partial X$ then \cite{mnarth} Lemma 4.6 tells us that
$\eta_{\bf y}$ is within bounded distance of $\eta_{\bf x}$ and the
bound depends only on the hyperbolicity constant of $X$ (using the Gromov
product definition) and $\eta_{\bf x}(y_0)$.\\
$\bullet$
If a group $G$ acts by isometries on $X$ fixing the point
$\zeta\in \partial X$ then the function $f_{\bf x}$
from $G$ to $\mathbb R$ given by $g\mapsto \eta_{\bf x}(gx_0)$
is a quasi-morphism (\cite{mnarth} Corollary 4.8).
Moreover using a different sequence tending to $\partial X$ (or a different
basepoint $x_0$) gives us a quasi-morphism which is within bounded distance
of $f_{\bf x}$. But given any quasi-morphism $f$, we can form the homogenisation
$\overline{f}$ of $f$ so that $\overline{f}(g^n)=n\overline{f}(g)$ for all
$n\in \Z$ and $g\in G$. This is the unique
homogeneous quasi-morphism in the equivalence class of all quasi-morphisms
within bounded distance of $f$; indeed we have $|f(g)-\overline{f}(g)|\leq D$
for any defect $D$ of $f$. Thus we define the {\bf Busemann quasi-morphism}
$B_\zeta$ of the action of $G$ on $X$ at $\zeta\in\partial X$ to be the
homogenisation of $f_{\bf x}$ and we see that $B$ is indeed independent of
the particular sequence $\bf x$ tending to $\zeta$.\\
$\bullet$
For such an action, an element $g\in G$ acts loxodromically if and only
if $B_\zeta(g)\neq 0$.

We note here two points: first that if our action of $G$
is on $\R$ itself (or restricts to a copy of $\R$)
and is orientation preserving then (by taking $x_n=n$) our definition
gives us that the Busemann quasi-morphism is simply the homomorphism
$g\mapsto g(0)$ (hence our use of minus the standard definition).
Second: we have to use genuine isometric actions to obtain a
Busemann quasi-morphism. For instance if we create a quasi-action
$\alpha$ by
taking the unit translation action of $\Z$ on $\R$ and conjugating by
the quasi-morphism $x\mapsto 2x\,(x\geq 0)$ and $x\mapsto x\,(x\leq 0)$
then the function $g\mapsto\alpha(g,0)$ (which would be the appropriate
formula to use by our first point, at least before we homogenise)
is readily seen not to be a quasi-morphism.
  
We will now give a complete description of type (3)$^+$ isometric
actions. We have seen above that if $G$ fixes a bi-infinite geodesic setwise
then the Busemann quasi-morphism is just the corresponding
homomorphism given by translation along the geodesic, so in general we would
expect that the Busemann quasi-morphism is given by (or is close to)
translation along an appropriate quasi-geodesic. As a type (3)$^+$ action
always has loxodromic elements, the obvious class of
quasi-geodesics (with domain $\Z$ here as opposed to $\R$) to take
would be those of the form
$n\mapsto l^n(x_0)$ where $l$ is any fixed loxodromic element of $G$
and $x_0\in X$ is any basepoint. It is then well known that there is
$K\geq 1$ such that for all $m,n$ we have
\[(1/K) |m-n|\leq d_X(l^m(x_0),l^n(x_0))=d(l^{m-n}(x_0),x_0)\leq K |m-n|.\]
This inequality holds for any loxodromic element acting on any metric
space, but we are in a hyperbolic space and so can take advantage of the
Morse lemma to get a reverse inequality.
\begin{lem} \label{mrse} 
  If $l$ is any loxodromic element acting on a hyperbolic space $X$ and
  $x_0$ is any point in $X$ then there
  is a constant $L\geq 0$ such that for any integers $M,i,N$ with $M<i<N$,
  we have
  \[d_X(l^N(x_0),l^M(x_0))-d_X(l^N(x_0),l^i(x_0))
    \geq d_X(l^i(x_0),l^M(x_0))-2L.\]
\end{lem}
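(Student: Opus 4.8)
The plan is to read the asserted inequality as a near-collinearity statement for the three orbit points. Writing $a=l^M(x_0)$, $b=l^i(x_0)$ and $c=l^N(x_0)$, the claimed inequality
\[d_X(l^N(x_0),l^M(x_0))-d_X(l^N(x_0),l^i(x_0))\geq d_X(l^i(x_0),l^M(x_0))-2L\]
rearranges to
\[d_X(a,b)+d_X(b,c)\leq d_X(a,c)+2L,\]
that is, the triangle inequality among $a,b,c$ is an equality up to the additive error $2L$. This is exactly what one expects when $b$ lies close to a geodesic joining $a$ and $c$, so the whole proof reduces to producing such a geodesic and estimating.

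The tool is the Morse lemma. Since $l$ is loxodromic, the map $n\mapsto l^n(x_0)$ is a $(K,\epsilon)$-quasi-isometric embedding of $\Z$ (the two-sided linear bounds in $|m-n|$ noted just before the lemma), and restricting to the integers $M\leq n\leq N$ gives a $(K,\epsilon)$-quasi-geodesic from $a$ to $c$ with the \emph{same} constants. As $X$ is hyperbolic, the Morse lemma supplies a constant $L\geq 0$, depending only on $K$, $\epsilon$ and the hyperbolicity constant of $X$, such that this quasi-geodesic lies within Hausdorff distance $L$ of the geodesic segment $[a,c]$. First I would use this to find, for $M<i<N$, a point $p$ on $[a,c]$ with $d_X(b,p)\leq L$, since $b=l^i(x_0)$ is one of the points of the quasi-geodesic. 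Two applications of the triangle inequality then give
\[d_X(a,b)+d_X(b,c)\leq\bigl(d_X(a,p)+L\bigr)+\bigl(L+d_X(p,c)\bigr)=d_X(a,c)+2L,\]
the final equality because $p$ lies on the geodesic $[a,c]$. Rearranging recovers the statement with this value of $L$.

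The only point needing care is the uniformity of $L$ in $M,i,N$, and this is automatic rather than an obstacle: the quasi-geodesic constants are those of the single embedding $n\mapsto l^n(x_0)$ and so do not depend on which finite block $[M,N]$ we restrict to, whence the Morse constant depends only on $K$, $\epsilon$ and the hyperbolicity constant. I would also note that no ordering of $a,b,c$ along $[a,c]$ is required, since the estimate above holds for \emph{any} point $p$ of $[a,c]$ close to $b$; if a genuinely continuous quasi-geodesic is wanted before quoting the Morse lemma, one may simply join consecutive points $l^n(x_0)$ by geodesic segments, which alters the constants in a controlled and still uniform way.
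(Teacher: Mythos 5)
Your proof is correct and is essentially the same as the paper's: both apply the Morse lemma to the $(K,0)$ quasi-geodesic $n\mapsto l^n(x_0)$ on $[M,N]\cap\Z$ to find a point $t$ on a geodesic from $l^M(x_0)$ to $l^N(x_0)$ within $L$ of $l^i(x_0)$, and then conclude by two triangle inequalities using that $t$ lies on the geodesic. Your rearrangement into the near-degenerate triangle inequality $d(a,b)+d(b,c)\leq d(a,c)+2L$ is only a cosmetic reorganisation of the paper's chain of estimates.
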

\begin{proof}
  We have seen
  that the map from $[M,N]\cap\Z$ to $X$ given by $i\mapsto l^i(x_0)$ is
  a $(K,0)$ quasi-geodesic. The Morse lemma tells us that there is a
  constant $L$, depending only on $K$ and the hyperbolicity constant
  $\delta$ of $X$, such that on taking some geodesic $\gamma$ between the two
  points $l^M(x_0)$ and $l^N(x_0)$ in $X$, any $(K,0)$ quasi-geodesic with
  the same endpoints has Hausdorff distance at most $L$ from $\gamma$.
  Therefore if $i$ is between $M$ and $N$ then $l^i(x_0)$ lies on such
  a quasi-geodesic, giving us a point $t$ on $\gamma$ with
  $d_X(l^i(x_0),t)\leq L$. Thus we have
  \begin{eqnarray*}
    d_X(l^N(x_0),l^M(x_0))-d_X(l^N(x_0),l^i(x_0))&\geq& \\ 
    d_X(l^N(x_0),l^M(x_0))-d_X(l^N(x_0),t)-d(t,l^i(x_0))&\geq&\\
    d_X(l^N(x_0),l^M(x_0))-d_X(l^N(x_0),t)-L.&&
  \end{eqnarray*}
  But as $t$ lies on a geodesic between $l^M(x_0)$ and $l^N(x_0)$, we have
\[d_X(l^N(x_0),l^M(x_0))-d_X(l^N(x_0),t)=d_X(t,l^M(x_0))\mbox{ and thus }\]
   \begin{eqnarray*}
     d_X(l^N(x_0),l^M(x_0))-d_X(l^N(x_0),t)-L=d_X(t,l^M(x_0))-L&\geq&\\
d_X(l^i(x_0),l^M(x_0))-d_X(t,l^i(x_0))-L&\geq& \\     
     d_X(l^i(x_0),l^M(x_0))-2L.&&
\end{eqnarray*}                                  
\end{proof}
Here we use the term quasi-line for a geodesic metric space which is
quasi-isometric to $\R$.
\begin{thm} \label{twchr}
Suppose that $G$ is any group with an isometric action on an arbitrary
hyperbolic space $X$  which is of type (3)$^+$ and with Busemann
quasi-morphism $B$ at one of the two limit points.
Then this action can be reduced to the quasi-action $\beta$
which is translation by $B$ on $\R$. If the isometric action is cobounded
(that is, $X$ is a quasi-line) then this is an equivalence.
\end{thm}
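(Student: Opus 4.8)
The plan is to produce an explicit coarse $G$-equivariant quasi-isometric embedding $F\colon\R\to X$. By the definition of reduction, such a map is exactly what is needed to reduce the action of $G$ on $X$ to $\beta$ on $\R$, and in the cobounded case coarse density of the image will upgrade $F$ to a quasi-isometry, hence to an equivalence. Since the action is of type (3)$^+$ it contains a loxodromic element $l$, and loxodromic elements are precisely those with $B(l)\neq0$; replacing $l$ by $l^{-1}$ if necessary we may assume $\tau:=B(l)>0$. The orbit map $n\mapsto l^n(x_0)$ is a $(K,0)$ quasi-geodesic, so composing with the quasi-isometry $t\mapsto\lfloor t/\tau\rfloor$ from $\R$ to $\Z$ (and using Example \ref{ex2} (3) to move between $\Z$ and $\R$) shows that $F(t):=l^{\lfloor t/\tau\rfloor}(x_0)$ is a quasi-isometric embedding of $\R$ into $X$.

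The substance of the proof is coarse $G$-equivariance, namely a bound on $d_X\!\big(gF(t),F(t+B(g))\big)$ uniform in $g$ and $t$. Two ingredients combine. First, since $g$ fixes both limit points $\zeta^{\pm}$, it carries the quasi-geodesic $\{l^n(x_0)\}$ to a quasi-geodesic with the same endpoints, so by the Morse lemma $g\cdot l^k(x_0)$ lies within a constant depending only on $K,\delta$ of some axis point $l^{m}(x_0)$, with $m$ an orientation-preserving (hence increasing) coarse function of $k$ because $\zeta^+$ is fixed. Second, the Busemann data identifies $m$. Writing $\eta$ for the quasi-horofunction of the sequence $x_n=l^n(x_0)\to\zeta^+$, the computation behind $f_{\bf x}$ being a quasi-morphism (\cite{mnarth} Corollary 4.8), via $d_X(x_n,gz)=d_X(g^{-1}x_n,z)$ with $g^{-1}x_n\to\zeta^+$, gives the cocycle estimate $\eta(gz)=\eta(z)+f_{\bf x}(g)+O(1)$, and since $|f_{\bf x}(g)-B(g)|$ is at most a defect this reads $\eta(gz)=\eta(z)+B(g)+O(1)$. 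Combining this with the two-sided estimate $\eta(l^k(x_0))=k\tau+O(1)$ yields $\eta\big(g\,l^k(x_0)\big)=k\tau+B(g)+O(1)$. Here the lower half of the two-sided estimate is exactly Lemma \ref{mrse} (with $M=0$, $N\to\infty$), the upper half is the triangle inequality $\eta(l^k(x_0))\le d_X(l^k(x_0),x_0)$, and the linearity $d_X(l^k(x_0),x_0)=k\tau+O(1)$ is the standard fact that a loxodromic in a hyperbolic space translates its orbit at constant stable length, here equal to $B(l)=\tau$. Comparing heights forces $m\tau\approx k\tau+B(g)$, i.e. $m\approx k+B(g)/\tau\approx\lfloor(t+B(g))/\tau\rfloor$ when $k=\lfloor t/\tau\rfloor$; as consecutive axis points are a bounded distance apart, a bounded discrepancy in the index produces a bounded discrepancy in $X$, giving coarse equivariance with a constant independent of $g$.

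For the final clause, if the action is cobounded then $X$ is a quasi-line, so the orbit $G\cdot x_0$, and hence the axis $\{l^n(x_0)\}$, is coarse dense; thus $F$ is coarse onto and is therefore a coarse $G$-equivariant quasi-isometry, upgrading the reduction to an equivalence. I expect the main obstacle to be the middle step: assembling the cocycle estimate for $\eta$ together with the Morse-lemma comparison into a \emph{uniform} bound on the index discrepancy $m-\lfloor(t+B(g))/\tau\rfloor$, and in particular tracking that passing from $f_{\bf x}$ to its homogenisation $B$, and between quasi-horofunctions of different sequences and basepoints, each contributes only bounded error absorbed into the single constant $M$.
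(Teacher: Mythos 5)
Your proof is correct, but it is organised as the mirror image of the paper's argument, and the comparison is worth recording. The paper builds the map in the opposite direction: it sends $g(x_0)\in Orb(x_0)$ to the quasi-horofunction value $\eta_{\bf l}(gx_0)\in\R$, so that coarse $G$-equivariance is immediate from $f_{\bf l}$ being a quasi-morphism, and the entire burden falls on proving the quasi-isometric embedding property, namely the two-sided estimate $d_X(g(x_0),h(x_0))-6L\leq|\eta_{\bf l}(gx_0)-\eta_{\bf l}(hx_0)|\leq d_X(g(x_0),h(x_0))$ for \emph{arbitrary} pairs of orbit points, which is where Lemma \ref{mrse} and the Morse lemma are deployed. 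You instead map $\R$ onto the axis $\{l^n(x_0)\}$, so the quasi-isometric embedding property is free (the axis is a quasi-geodesic), and the burden falls on equivariance; your key step, locating $g\,l^k(x_0)$ at the correct axis index by comparing Busemann heights, uses exactly the same two ingredients (the Morse lemma for quasi-geodesics sharing endpoints at infinity, and the uniform-defect cocycle estimate for $\eta$, which for $z$ in the orbit is literally the quasi-morphism property of $f_{\bf l}$ quoted from \cite{mnarth}), so all your $O(1)$'s are genuinely uniform in $g$ and $t$ and the worry you flag at the end does not materialise. Two small points: the coarse density of the axis in the cobounded case follows from $X$ being a quasi-line (a quasi-geodesic running to both ends of a quasi-line is coarse dense), not from coarse density of the full orbit as your phrasing suggests; and the monotonicity of $k\mapsto m$ that you invoke from orientation preservation is never actually needed, since the height comparison pins down $m$ on its own. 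What your route buys is a completely explicit quasi-inverse and no need for the lower bound of the embedding estimate on all of $Orb(x_0)$; what the paper's route buys is a statement (the horofunction is a coarse equivariant quasi-isometry on the whole orbit) that is reused directly in Corollaries \ref{symb} and \ref{rvrs}.
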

\begin{proof}
  We merely require a coarse $G$-equivariant quasi-isometry from
  translation by $B$ on $\R$ to the given action restricted to
  $Orb(x_0)$, for $x_0$ an arbitrary point of $X$, as we can then
  compose with inclusion into $X$ to obtain a coarse $G$-equivariant
  quasi-isometric embedding from $\R$ to $X$ (which will be an
  equivalence if the action is cobounded). We will actually go in the
  other direction and show that we have a coarse $G$-equivariant
  quasi-isometry from $Orb(x_0)$ to $\R$. 

We first note that it does not matter whether we use translation
  by $B$ or by some inhomogeneous quasi-morphism within bounded distance
  of $B$ for our quasi-action on $\R$, because the identity on $\R$ will
  be a coarse $G$-equivariant isometry between these two quasi-actions.

  Take one of the two fixed points $p,q$ on
  $\partial X$, say $p$ (it will turn out not to matter which)
  and let $B$ be the Busemann quasi-morphism at $p$.
Take any basepoint $x_0$ and loxodromic element $l$ with attracting fixed
point $p$. For $n\geq 0$ we will define $\bf l$ to be the sequence
$(l^n(x_0))$ which tends to $p$.
We then define our map $F:Orb(x_0)\rightarrow\R$ by 
sending $g(x_0)$ to the appropriate value
$f_{\bf l}(g):=\eta_{\bf l}(gx_0)$ of the
quasi-horofunction $\eta_{\bf l}$. Note that $\eta_{\bf l}$
is well defined on $Orb(x_0)$
and $f_{\bf l}$ is a quasi-morphism within bounded distance of $B$.

On taking $g,h\in G$, we now need to consider the values of
$\eta_{\bf l}(gx_0)$ and $\eta_{\bf l}(hx_0)$ which by definition is
\[\liminf_{n\rightarrow\infty} \Big(d_X(l^n(x_0),x_0)-d_X(l^n(x_0),g(x_0))
  \Big)\]
for $\eta_{\bf l}(gx_0)$ and the obvious equivalent expression for
$\eta_{\bf l}(hx_0)$.

Note that if we have two
  bounded real valued sequences $(a_n)$ and $(b_n)$ with constants
  $c,d$ such that $c\leq a_n-b_n\leq d$ for sufficiently large $n$ then
  $c\leq\liminf_{n\rightarrow\infty} a_n - \liminf_{n\rightarrow\infty} b_n\leq d$.
  Therefore we estimate the expression
\[\Big(d_X(l^n(x_0),x_0)-d_X(l^n(x_0),g(x_0))
    \Big)-\Big(d_X(l^n(x_0),x_0)-d_X(l^n(x_0),h(x_0))
    \Big)\]
which is equal to $d_X(l^n(x_0),h(x_0))-d_X(l^n(x_0)-g(x_0))$.  

Now our bi-infinite $(K,0)$ quasi-geodesic $\sigma$ has limit points $p,q$
on $\partial X$. But any $g\in G$ acts as an isometry on $X$ and fixes
$p$ and $q$ because we have a type (3)$^+$ action. Thus $g(\sigma)$ is also
a $(K,0)$ quasi-geodesic between $p$ and $q$, thus again by the appropriate
Morse lemma (such as \cite{mnlms} Lemma 2.11, which specifically does not
assume the existence of a bi-infinite geodesic between $p$ and $q$)
we will have an integer $n(g)$, which in fact only depends on $g(x_0)$,
with $d_X(g(x_0),l^{n(g)}(x_0))\leq L$. We also
have an equivalent integer $n(h)$ for the element $h$.

Now take $n$ large enough that it is greater than $n(g)$ and $n(h)$.
Thus assuming that $n(h)\leq n(g)<n$ (if not then swap $g$ and $h$), 
we have by Lemma \ref{mrse} that
  \[d_X(l^n(x_0),l^{n(h)}(x_0))-d_X(l^n(x_0),l^{n(g)}(x_0))
    \geq d_X(l^{n(g)}(x_0),l^{n(h)}(x_0))-2L.\]
  But  $d_X(l^n(x_0),h(x_0))\geq  d_X(l^n(x_0),l^{n(h)}(x_0))-L$
  and $d_X(l^n(x_0),l^{n(g)}(x_0))\leq d_X(l^n(x_0),g(x_0))+L$
so
\[d_X(l^n(x_0),h(x_0))-d_X(l^n(x_0),g(x_0))
    \geq d_X(g(x_0),h(x_0))-6L\]  
for all large $n$ and so the same holds for our difference of
quasi-horofunctions $\eta_{\bf x}(g(x_0))-\eta_{\bf x}(h(x_0))$ by taking
lim infs. But here we can replace this difference with
the modulus of this difference (by considering when $d_X(g(x_0),h(x_0))$
is at least or at most $6L$ separately). 

Of course we also have
  \[d_X(h(x_0),g(x_0))\geq d_X(l^n(x_0),h(x_0))-d_X(l^n(x_0),g(x_0))
\geq  -d_X(h(x_0),g(x_0))\]
which again will hold for $\eta_{\bf x}(g(x_0))-\eta_{\bf x}(h(x_0))$
by taking lim infs. Thus we obtain
\[d_X(h(x_0),g(x_0))-6L\leq |\eta_{\bf x}(g(x_0))-\eta_{\bf x}(h(x_0))|
\leq d_X(h(x_0),g(x_0)),\] so that our map $F$ sending
$g(x_0)$ to $f_{\bf l}(g)=\eta_{\bf l}(gx_0)$ is indeed a quasi-isometric
embedding.
Moreover $F$ is coarse onto because its
 image is coarse connected (as $Orb(x_0)$ is) and unbounded both above
 and below, as $G$ contains a loxodromic element.
 It is also coarse $G$-equivariant because
 \[d_\R(Fg(h(x_0)),gF(h(x_0)))=|f_{\bf l}(gh)-g(f_{\bf l}(h))|
   =|f_{\bf l}(gh)-(f_{\bf l}(h)+f_{\bf l}(g))|\] which is bounded as $f_{\bf l}$
 is a quasi-morphism.
\end{proof}  
\begin{co} \label{symb}
  If $G$ has a type (3)$^+$ action on some hyperbolic space $X$
  with fixed points $\zeta^\pm$ on $\partial X$
  then the respective homogeneous Busemann quasi-morphisms $B^+$ and
  $B^-$ of this action are minus each other.
\end{co}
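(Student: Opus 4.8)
The plan is to reduce the statement to a single boundedness estimate and then read off the conclusion from the behaviour of homogeneous quasi-morphisms under bounded perturbation. Fix a loxodromic element $l$ with attracting fixed point $\zeta^+$ (replacing $l$ by $l^{-1}$ if necessary), so that $l^n(x_0)\to\zeta^+$ and $l^{-n}(x_0)\to\zeta^-$ as $n\to\infty$. Write $\eta^+$ for the quasi-horofunction of the sequence $(l^n(x_0))_{n\geq 0}$ and $\eta^-$ for that of $(l^{-n}(x_0))_{n\geq 0}$, and set $f^\pm(g)=\eta^\pm(gx_0)$. By the discussion before Definition \ref{dbus}, $B^+$ and $B^-$ are the homogenisations of $f^+$ and $f^-$ respectively, since changing the sequence tending to a given boundary point only alters these by a bounded amount. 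As homogenisation negates under $f\mapsto -f$ and is insensitive to bounded perturbations, it suffices to prove that $f^+ + f^-$ is a bounded function on $G$; then $f^+$ is within bounded distance of $-f^-$ and hence $B^+=\overline{f^+}=\overline{-f^-}=-B^-$.

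First I would establish the estimate on the quasi-geodesic $\sigma=\{l^m(x_0):m\in\Z\}$ itself, where $L$ denotes the Morse constant of Lemma \ref{mrse}. For $m>0$, applying that lemma with $(M,i,N)=(0,m,n)$ for large $n$ and combining with the triangle inequality sandwiches
\[d_X(l^m(x_0),x_0)-2L\ \leq\ \eta^+(l^m(x_0))\ \leq\ d_X(l^m(x_0),x_0),\]
where the lower bound survives passage to the $\liminf$ because it holds for all large $n$. Applying the same lemma with $(M,i,N)=(-n,0,m)$ and rearranging gives
\[-d_X(l^m(x_0),x_0)\ \leq\ \eta^-(l^m(x_0))\ \leq\ -d_X(l^m(x_0),x_0)+2L.\]
Adding these yields $|\eta^+(l^m(x_0))+\eta^-(l^m(x_0))|\leq 2L$ for $m>0$; the case $m<0$ is identical after swapping the roles of $\zeta^+$ and $\zeta^-$, and $m=0$ gives $0$ outright.

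The remaining step transfers this bound from $\sigma$ to the whole orbit. Exactly as in the proof of Theorem \ref{twchr}, for each $g\in G$ the image $g(\sigma)$ is a $(K,0)$ quasi-geodesic with the same endpoints $\zeta^\pm$, so the Morse lemma provides an integer $n(g)$ with $d_X(gx_0,l^{n(g)}(x_0))\leq L$. Since a quasi-horofunction is $1$-Lipschitz (the difference of the two bracketed expressions is bounded by $d_X$ of the two points, which passes to the $\liminf$ by the estimate noted after Definition \ref{dbus}), we get $|\eta^\pm(gx_0)-\eta^\pm(l^{n(g)}(x_0))|\leq L$ and hence
\[|f^+(g)+f^-(g)|=|\eta^+(gx_0)+\eta^-(gx_0)|\leq 4L\]
for every $g$, which is the boundedness required. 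The main obstacle is the bookkeeping in the sandwiching step, in particular applying Lemma \ref{mrse} with the index choice $(M,i,N)=(-n,0,m)$ for $\eta^-$ and checking that the inequalities are oriented correctly so that the two sandwiches cancel; once the signs are right the rest is routine manipulation of the $\liminf$.
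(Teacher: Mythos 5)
Your proof is correct and follows essentially the same route as the paper: both arguments rest on Lemma \ref{mrse} together with the Morse-lemma projection $n(g)$ of $gx_0$ onto the axis $\{l^m(x_0)\}$, both split into cases according to the sign of that index, and both conclude by observing that a bounded homogeneous quasi-morphism is zero. Your reorganisation --- sandwiching $\eta^\pm$ on the axis points first and then transferring to the whole orbit via the $1$-Lipschitz property of quasi-horofunctions --- is a slightly cleaner bookkeeping of the same estimates (yielding $4L$ where the paper gets $6L$), not a different method.
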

\begin{proof}
  As in the proof of Theorem \ref{twchr} with $p$ set to be $\zeta^+$ and
  $q=\zeta^-$, we let the sequence ${\bf m}$ be $(l^{-n}(x_0))$ which tends
  to $\zeta^-$.
  We now run through this above proof with the sequence $\bf l$ and 
  setting $h=id$. On taking any $g\in G$,
  first suppose that the integer $n(g)$ obtained in this proof is at least
  zero. We conclude that
  \[d(g(x_0),x_0)-6L\leq \eta_{\bf l}(gx_0)\leq d(g(x_0),x_0).\]
  But we can also run through this proof with the same $g\in G$ but
  replacing $l$ with $l^{-1}$, whereupon
we are now considering $\eta_{\bf m}(gx_0)$ and the previous integer $n(g)$
will now be $-n(g)$. Now our inequalities become
\[d(g(x_0),x_0)-6L\leq -\eta_{\bf m}(gx_0)\leq d(g(x_0),x_0).\]
Thus on reversing this and adding the two together, we obtain
\[-6L\leq \eta_{\bf l}(gx_0)+\eta_{\bf m}(gx_0)\leq 6L.\]
For elements $g\in G$ with $n(g)\leq 0$, the two inequalities are reversed
but the sum is the same. However the functions
$g\mapsto\eta_{\bf l}(gx_0)$ and $g\mapsto\eta_{\bf m}(gx_0)$ from $G$ to $\R$
are within bounded distance of $B^+$ and $B^-$ respectively, so $B^++B^-$ is a
homogeneous quasi-morphism which is bounded, thus is zero.
\end{proof}
\subsection{Orientation reversing actions}
Suppose a group $G$ has an isometric action on some hyperbolic space $X$
which is of type (3)$^-$. Then
$G$ can be decomposed as $G^+\cup t G^+$  where $G^+$ is the index two
orientation preserving subgroup, thus $G^+$ has a type (3)$^+$
action on our space $X$, and $t$ is any fixed element of $G$ that swaps the
two limit points. Here we will define the {\bf Busemann quasi-morphism} of the
action to be (either of) the (two) Busemann quasi-morphism(s) of $G^+$.
Note that all elements of the coset $tG^+$ must be elliptic:
if not then such an element fixes something on the boundary, so its
square fixes at least three points on the boundary and hence is not
parabolic or loxodromic. In this case we would expect an equivalent statement
to Theorem \ref{twchr} but with some sort of quasi-action on $\R$ that
is like a isometric dihedral action on $\R$ rather than with just
translations. Indeed in analogy with isometric dihedral actions, suppose
we take $H$ to be any index 2 subgroup of $G$ and $q$ to be any homogeneous
quasi-morphism defined on $H$. Then for any element $h\in H$, not only is
$x\mapsto x+q(h)$ an isometry of $\R$ but so is $x\mapsto -x-q(h)$.
On taking any element $t\in G\setminus H$ and setting
$\beta:G\times \R\rightarrow \R$ to be
\begin{eqnarray*}
\beta(h,x)&=&x+q(h)\\
  \mbox{ and } \beta(th,x)&=&-x-q(h)\mbox{ for }x\in\R\mbox{ and }h\in H
\end{eqnarray*}
we might hope that $\beta$ is a quasi-action of $G$ on $\R$. It can be
checked that this is true if $q$ satisfies the antisymmetry condition
that $q(tht^{-1})=-q(h)$.
For instance on taking $h_1,h_2\in H$ we have
  \[\beta(h_1,\beta(th_2,x))=\beta(h_1,-x-q(h_2))=-x+q(h_1)-q(h_2)\]
  whereas
  \[  \beta(h_1th_2,x)=\beta(t\cdot t^{-1}h_1th_2,x)=-x-q(t^{-1}h_1th_2)\]
  because $t^{-1}h_1th_2\in H$. This also implies that $q(t^{-1}h_1th_2)$
  is close to $q(t^{-1}h_1t)+q(h_2)$ but $q(t^{-1}h_1t)=-q(h_1)$ by antisymmetry,
so $q(h_1,\beta(th_2,x))$ is close to
$\beta(h_1th_2,x)$, independently of $x$ and $h_1,h_2$.

Note that if the antisymmetry condition holds under conjugation by
$t$ then it will also hold under conjugation by any element 
$s=th$ in $G\setminus H$ because a homogeneous quasi-morphism $q$ is constant
on conjugacy classes. We will refer to this as $q$ is antisymmetric in $G$.
Moreover we will have $q(s^2)=0$ because 
$q(s^2)=q(s\cdot s^2\cdot s^{-1})=-q(s^2)$.
Note also that again $\beta$ is an isometric action if $q$ is
a genuine homomorphism from $H$ to $\R$ which is antisymmetric in $G$.
Moreover every orientation reversing isometric action of $G$ on $\R$
occurs in this way and this action will be of type (3)$^-$ unless the
homomorphism is trivial. We call the quasi-action $\beta$ given by
$q$ the {\bf dihedral quasi-action} of $G$ on $\R$ by $q$. Note that a dihedral
quasi-action does not really depend on which $t$ in $G\setminus H$
we choose (this just identifies the origin of $\R$ with the fixed
point of $t$).

We can now give our equivalent statement for type (3)$^-$ actions
to Theorem \ref{twchr}.
\begin{co} \label{rvrs}
  Suppose that $G$ is any group with an isometric action of type (3)$^-$
  on an arbitrary hyperbolic space $X$. Let $G^+$ be the orientation
  preserving subgroup of $G$ and $B$ the Busemann quasi-morphism of $G$.
  Then our isometric action of $G$ can be reduced to the quasi-action
  $\beta$ which is the dihedral quasi-action of $G$ on $\R$ given by $B$.
If the isometric action is cobounded
(that is, $X$ is a quasi-line) then this is an equivalence.
\end{co}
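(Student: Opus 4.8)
The plan is to run the argument of Theorem \ref{twchr} on the full orbit $O:=\{gx_0\mid g\in G\}$ and to supplement it with a description of how an orientation-reversing element acts. Fix a loxodromic $l\in G^+$ with attracting fixed point $p=\zeta^+$ (every loxodromic lies in $G^+$, since it fixes its endpoint pair and cannot swap it), a basepoint $x_0$, and the sequence ${\bf l}=(l^nx_0)_{n\ge0}\to p$, and put $F:=\eta_{\bf l}|_O$, so $F(gx_0)=f_{\bf l}(g)$. Since any two orbits of an isometric action are Hausdorff close, $O$ is Hausdorff close to the $G^+$-orbit and in particular is coarse connected and unbounded in both directions. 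I aim to show $F$ is a coarse $G$-equivariant quasi-isometry from the action on $O$ to the dihedral quasi-action $\beta$ on $\R$; composing a quasi-inverse of $F$ with the $G$-equivariant inclusion $O\hookrightarrow X$ then gives the required reduction, which becomes an equivalence when the action is cobounded because then $O$ is coarse dense in $X$.

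First I would observe that $F$ is a quasi-isometric embedding. The only geometric input to the estimate in Theorem \ref{twchr} is, for each $g$, an integer $n(g)$ with $d_X(gx_0,l^{n(g)}x_0)\le L$, obtained by applying the Morse lemma to the image of the bi-infinite $(K,0)$ quasi-geodesic $\sigma=(l^nx_0)_{n\in\Z}$. For $g\in tG^+$ the path $g(\sigma)$ runs from $q$ to $p$ instead of from $p$ to $q$, but it is still a $(K,0)$ quasi-geodesic with endpoint set $\{p,q\}$, hence Hausdorff close to $\sigma$; so $n(g)$ exists for every $g\in G$ and the two-sided bound $d_X(gx_0,hx_0)-6L\le|F(gx_0)-F(hx_0)|\le d_X(gx_0,hx_0)$ holds for all $g,h\in G$. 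Coarse-ontoness is then immediate from coarse connectedness and unboundedness of the image.

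The main step is the reflection relation $F(tw)\approx-F(w)$, which I expect to be the principal obstacle since it forces one to pass through the opposite boundary point. Here I would use the exact identity $\eta_{t{\bf l}}(tw)=\eta_{\bf l}(w)$, where $t{\bf l}=(tl^nx_0)_{n\ge0}$ has basepoint $tx_0$ and tends to $t(p)=q$. Thus $\eta_{t{\bf l}}$ is a quasi-horofunction at $q$, so by \cite{mnarth} Lemma 4.6 it is within a uniform bounded distance of $\eta_{\bf m}$ for ${\bf m}=(l^{-n}x_0)_{n\ge0}$ as in Corollary \ref{symb}; and the estimate $|\eta_{\bf l}(z)+\eta_{\bf m}(z)|\le6L$ from the proof of that corollary holds at every $z\in O$, since it used only that $z$ is $L$-close to $\sigma$. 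Chaining these,
\[\eta_{\bf l}(w)=\eta_{t{\bf l}}(tw)\approx\eta_{\bf m}(tw)\approx-\eta_{\bf l}(tw),\]
uniformly in $w\in O$, which is exactly $F(tw)\approx-F(w)=\beta(t,F(w))$ (taking the origin of $\R$ at the fixed point of $t$).

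It remains to assemble coarse $G$-equivariance, for which I first record that $B$ is antisymmetric, i.e. $B(tht^{-1})=-B(h)$ for $h\in G^+$: a type (3) action has no parabolics and conjugation preserves type, so $h$ is either elliptic, whence both sides vanish, or loxodromic, in which case $tht^{-1}$ is loxodromic with attracting point $t(p)=q$ and the same translation length, forcing $B(tht^{-1})=-B(h)$. Writing any group element and any orbit point according to the two cosets $G^+$ and $tG^+$, each of the four resulting verifications of $d_\R(F(\gamma w),\beta(\gamma,F(w)))\le M$ reduces to the quasi-morphism defect of $f_{\bf l}$ on $G^+$ (handling the translation part, exactly as in Theorem \ref{twchr}), the reflection relation above, and antisymmetry of $B$; I would carry these out directly. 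This establishes that $F$ is a coarse $G$-equivariant quasi-isometry, completing the reduction and, in the cobounded case, the equivalence.
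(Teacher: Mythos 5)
Your proposal is correct and follows essentially the same route as the paper: extend the quasi-horofunction map of Theorem \ref{twchr} to the whole $G$-orbit, note that the Morse lemma still applies to orientation-reversing elements since they preserve the endpoint pair, use the conjugation-by-$t$ identity for horofunctions together with Corollary \ref{symb} to pass to the opposite boundary point, and verify coarse $G$-equivariance coset by coset. The only (harmless) divergence is that you derive antisymmetry of $B$ from translation lengths of loxodromics rather than from the horofunction identity $\eta_{\bf l}(tht^{-1}x_0)=\eta_{\bf y}(hy_0)$ that the paper uses, and your four-case equivariance check, which you only sketch, does indeed reduce to the three ingredients you list.
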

\begin{proof}
  We take a loxodromic element $l$ of $G$, which will necessarily lie in
  $H$, with sequences $\bf l$ and $\bf m$ defined as before.
  We must now show that the Busemann quasi-morphism $B$ from $G^+$ to $\R$
  based at the attracting fixed point $\zeta^+$
  of $l$, namely the limit of $\bf l$, is antisymmetric in $G$. We pick
  any $t\in G\setminus G^+$ and we check that $B(tht^{-1})=-B(h)$ for
  any $h\in G^+$. (See also \cite{bargen} Proposition 2.9 which
  establishes this using quasicocycles.)
  This requires considering $\eta_{\bf l}(tht^{-1}x_0)$ which is the lim inf of
  $d_X(x_n,x_0)-d_X(tht^{-1}(x_0))$ for our basepoint $x_0$ and $x_n=l^n(x_0)$.
  Let us set $y_n=t^{-1}(x_n)$ for $n\in\Z$, so that the sequence
  ${\bf y}:=(y_n)$ tends to the
  repelling fixed point $\zeta^-$ of $l$ as $n$ tends to infinity.
  We then have that
  \[d_X(x_n,x_0)-d_X(x_n,tht^{-1}(x_0))=d_X(y_n,y_0)-d_X(y_n,h(y_0))\]
  and taking lim infs gives us that $\eta_{\bf l}(tht^{-1}x_0)=\eta_{\bf y}(hy_0)$.
  Now the function $h\mapsto \eta_{\bf l}(hx_0)$ is a quasi-morphism on $G^+$
  which is within bounded distance to $B$, whereas $h\mapsto \eta_{\bf y}(hy_0)$
  is within bounded distance to the Busemann quasi-morphism at $\zeta^-$
  which is $-B$ by Corollary \ref{symb}. As $h$ and $tht^{-1}$ are both in $G^+$,
  this gives us that $B(tht^{-1})=-B(h)$ and so $\beta$ really is a
  quasi-action.
  
As for showing that our isometric action reduces to this quasi-action $\beta$,
it is again enough to replace the quasi-action $\beta$ on $\R$ with
the quasi-action $\gamma$ which replaces $B$ with the quasi-morphism
$g\mapsto \eta_{\bf l}(gx_0)$ from $G^+$ to $\R$, because this is within
bounded distance of $B$. Thus
we now extend the proof of Theorem \ref{twchr} by letting our map
  $F$ be defined on the orbit of $x_0$ under $G$, not just $G^+$, by
  sending $g(x_0)$ to $\eta_{\bf l}(gx_0)\in\R$. This is still well defined,
  although $\eta$ need not be a quasi-morphism on $G$ anymore.

  Moreover in running through the proof of Theorem \ref{twchr}, the Morse lemma 
  is still valid for elements $g\in G\setminus G^+$ because $g$ sends our
  $(K,0)$ quasi-geodesic to another such quasi-geodesic with the same two
  endpoints, even if they are swapped. Thus the estimates that showed
  $F$ was a quasi-isometry still apply here.

  We do need however to show that $F$ is a coarse $G$-equivariant map from
  $Orb_G(x_0)$ to $\R$ and not just coarse $G^+$-equivariant.
    To this end, when considering
  $\eta_{\bf l}(gx_0)$ we cannot assume that $g\mapsto \eta_{\bf x}(gx_0)$
  is a quasi-morphism on $G$. However the argument above which
  established that $\eta_{\bf l}(tht^{-1}x_0)=\eta_{\bf y}(hy_0)$ for $h\in G^+$,
  or even $h\in G$, also gives us that $\eta_{\bf l}(thx_0)$
  is within $d_X(x_0,y_0)$ of $\eta_{\bf y}(hy_0)$ for $h\in G$ too.

  We also have that $g\mapsto \eta_{\bf l}(gx_0)$ and
  $g\mapsto \eta_{\bf y}(gy_0)$ are quasi-morphisms on $G^+$ (though not on $G$).
Thus for $h_1,h_2\in G^+$ we have
\[
|Fth_1(h_2x_0)-\gamma(th_1,F(h_2x_0))|=
|\eta_{\bf l}(th_1h_2x_0)+\eta_{\bf l}(h_2x_0)+\eta_{\bf l}(h_1x_0)|.\]
But from the above we have that $\eta_{\bf l}(th_1h_2x_0)$ is within bounded
distance of $\eta_{\bf y}(h_1h_2y_0)$, hence also of $-B(h_1h_2)$ by Corollary
\ref{symb} and thus $-B(h_1)-B(h_2)$. Meanwhile
$\eta_{\bf l}(h_ix_0)$ is within bounded distance of $B(h_i)$ for $i=1,2$, so
that our expression is bounded independently of the group elements.
Similarly we also have
\[
|Fh_1(th_2x_0)-\gamma(h_1,F(th_2x_0))|=
|\eta_{\bf l}(h_1th_2x_0)-\eta_{\bf l}(th_2x_0)-\eta_{\bf l}(h_1x_0)|,\]
with $\eta_{\bf l}(tt^{-1}h_1th_2x_0)$ within bounded
distance of $\eta_{\bf y}(t^{-1}h_1t\cdot h_2y_0)$ and hence also of
$\eta_{\bf y}(t^{-1}h_1ty_0)+\eta_{\bf y}(h_2y_0)=\eta_{\bf l}(h_1x_0)
+\eta_{\bf y}(h_2y_0)$.
But $\eta_{\bf l}(th_2x_0)$ is also close to $\eta_{\bf y}(h_2y_0)$.

Finally we have
\[
|Fth_1(th_2x_0)-\gamma(th_1,F(th_2x_0))|=
|\eta_{\bf l}(th_1th_2x_0)+\eta_{\bf l}(th_2x_0)+\eta_{\bf l}(h_1x_0)|.\]
Again $\eta_{\bf l}(th_1th_2x_0)$ is within bounded
distance of $\eta_{\bf y}(h_1th_2y_0)$, which from the previous case
(but swapping $\bf l$ and $\bf y$) is
close to $\eta_{\bf y}(h_1y_0)+\eta_{\bf l}(h_2x_0)$ and so to
$-B(h_1)+B(h_2)$. But $\eta_{\bf l}(th_2x_0)+\eta_{\bf l}(h_1x_0)$ is close to
$\eta_{\bf y}(h_2y_0)+B(h_1)$ and so to
$-B(h_2)+B(h_1)$, thus we have covered all cases.
\end{proof}
\subsection{Distinguishing isometric actions}  

Returning now to quasi-actions, suppose that we have a quasi-action $\alpha$
of a group $G$ on a metric space where quasi-orbits are quasi-isometric
to $\R$, so
that we can reduce it to a cobounded isometric action
on some quasi-line. We now consider 
when $\alpha$ can be reduced to a cobounded isometric action on $\R$.
Note that if this action is of type (3)$^+$ (respectively (3)$^-$)
then any reduction of $\alpha$ to an action on some hyperbolic space
will also be of type (3)$^+$ (respectively (3)$^-$). Theorem \ref{twchr}
and Corollary \ref{rvrs} tell us it is sufficient that the
corresponding Busemann function of this reduced isometric action is a genuine
homomorphism to $\R$.  This is because if we have a reduction
of $\alpha$ to an action $\gamma$ on a quasi-line
where the Busemann quasi-morphism $B$
is genuinely a homomorphism $\theta$ then these results applied to
$\gamma$ say that the corresponding translation/dihedral
quasi-action on $\R$ given by the Busemann quasi-morphism $B$ will be an
isometric action that is a reduction of $\gamma$, therefore also a
reduction of $\alpha$.

Conversely if a reduction of $\alpha$ to an 
isometric action on $\R$ exists, this reduced action will be by
translations or will be dihedral with $g\mapsto g(0)$ the Busemann
quasi-morphism. So for this reduction of $\alpha$, the
Busemann quasi-morphism will be a homomorphism of $G$ to $\R$.
However this answer misses two important points: first, do we obtain the
same or a similar Busemann quasi-morphism whenever we reduce $\alpha$ to an
isometric action on some hyperbolic space? Second, even if we have a positive
answer to the above question, how do we characterise
which quasi-actions can be made into isometric actions on $\R$ without
first having to turn them into an isometric action on a particular hyperbolic
space (which might not be an easy process)? We deal with the second point
first, where our answer depends directly on our given quasi-action.
\begin{thm} \label{dpell}
  Let $\alpha$ be a quasi-action of a group $G$ on a metric space which can
  be reduced to an isometric action of type (3)$^+$ on some hyperbolic space.\\
$\bullet$ If the set of elliptic elements of $\alpha$ is equal to the kernel
  of a homomorphism from $G$ to $\R$ then there is a reduction of $\alpha$
  to an isometric translation action of $G$ on $\R$.\\
$\bullet$ If the set of elliptic elements of $\alpha$ is not equal to the
  kernel of any homomorphism from $G$ to $\R$ then $\alpha$ cannot be
  reduced to any isometric action on a hyperbolic space that is proper
  or is CAT(0).\\
  Let $\alpha$ be a quasi-action of a group $G$ on a metric space which can
  be reduced to an isometric action of type (3)$^-$ on some hyperbolic space.\\
  $\bullet$ If there is an index 2 subgroup $H$ of $G$ and a
  homomorphism from $H$ to $\R$ such that the set of elliptic
  elements of the quasi-action $\alpha$ consists of the union of this kernel
  and $G\setminus H$ then there is a reduction of $\alpha$
  to a isometric dihedral action of $G$ on $\R$.\\
  $\bullet$ If there is no index 2 subgroup $H$ of $G$ having a
  homomorphism to $\R$ where the set of elliptic
  elements of the quasi-action $\alpha$ consists of the union of this kernel
  and $G\setminus H$ then $\alpha$ cannot be
  reduced to any isometric action on a  hyperbolic space that is proper
  or is CAT(0).  
\end{thm}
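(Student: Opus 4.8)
The plan is to reduce all four bullets to a single observation about homogeneous quasi-morphisms, which I will call the \emph{descent lemma}: if $\phi:G\to\R$ is a homogeneous quasi-morphism that vanishes on a normal subgroup $N$ with $G/N$ abelian, then $\phi$ is a genuine homomorphism. To see this, note that for $g\in G$ and $n\in N$ normality gives $(gn)^k=g^kn'$ with $n'\in N$, so from homogeneity and $\phi(n')=0$ we get $|k\phi(gn)-k\phi(g)|\leq D$ for all $k$, forcing $\phi(gn)=\phi(g)$; thus $\phi$ descends to a homogeneous quasi-morphism on the abelian group $G/N$, which is automatically additive on commuting elements and hence a homomorphism. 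For the first bullet, recall that under a type $(3)^+$ action an element is elliptic exactly when $B(g)=0$, and that ellipticity is preserved under reduction by Proposition \ref{qpres}, so the elliptic set of $\alpha$ equals $\{g:B(g)=0\}$ for the Busemann quasi-morphism $B$ of the given reduced action. If this set is $\ker\theta$, I apply the descent lemma with $N=\ker\theta$ (normal, and $G/\ker\theta\cong\theta(G)\leq\R$ abelian) to conclude $B$ is a homomorphism; translation by a homomorphism is a genuine isometric action on $\R$, and Theorem \ref{twchr} exhibits $\alpha$ as reducing to translation by $B$, giving the desired reduction.

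For the third bullet I run the same argument on the orientation-preserving subgroup. Here $G^+$ is intrinsic to $\alpha$, being the index-$2$ subgroup of elements preserving (rather than swapping) the two ends of a quasi-orbit; every element of $G\setminus G^+$ is elliptic, and the Busemann quasi-morphism $B$ lives on $G^+$ and is antisymmetric in $G$ by Corollary \ref{symb}. The hypothesis then says precisely that $\ker\theta$ is the set of elliptic elements of $G^+$ with $H=G^+$, so the descent lemma applied to $B$ on $G^+$ (with $G^+/\ker B$ embedding in $\R$) shows $B|_{G^+}$ is a homomorphism. An antisymmetric homomorphism defines a genuine isometric dihedral action on $\R$, as noted before Corollary \ref{rvrs}, and that corollary shows $\alpha$ reduces to the dihedral quasi-action by $B$; hence $\alpha$ reduces to a genuine isometric dihedral action on $\R$.

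The two negative bullets I would prove by contraposition, and it suffices to establish the following: a type $(3)^+$ isometric action on a hyperbolic space that is proper, or CAT(0), has Busemann quasi-morphism $B$ which is a genuine homomorphism (and similarly $B|_{G^+}$ in the type $(3)^-$ case). Indeed, then its elliptic set $\{B=0\}=\ker B$ is the kernel of a homomorphism to $\R$, and by Proposition \ref{qpres} the same elliptic set belongs to $\alpha$, contradicting the hypotheses of the second and fourth bullets. The CAT(0) case is clean: the Busemann function $b_\xi$ is a genuine convex horofunction, an isometry fixing $\xi$ sends $b_\xi$ to $b_\xi+c(g)$ with $c(g)$ an \emph{exact} constant, and the cocycle identity makes $g\mapsto c(g)$ a homomorphism which is the homogenisation of $B$. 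The tree sub-case of properness is equally clean, since two fixed ends determine a unique $G$-invariant bi-infinite geodesic on which a type $(3)^+$ action is by honest translations.

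The hard part will be the general proper hyperbolic case, where there is no single invariant geodesic and two horofunctions for $\xi$ need agree only up to bounded error, so the naive Busemann cocycle is only a quasi-morphism rather than a homomorphism. My plan is to exploit properness directly: the union of all bi-infinite geodesics between the two fixed points is a nonempty $G$-invariant quasi-line of bounded width, and, $Y$ being proper, the horofunction compactification is compact, so the horofunctions representing $\xi$ form a $G$-invariant compact set of bounded diameter on which $G$ acts by the affine maps $h\mapsto h\circ g^{-1}-h(g^{-1}x_0)$. Manufacturing from this an exact $G$-equivariant cocycle — equivalently, showing that the bounded cohomology class obstructing $B$ from being a homomorphism vanishes under properness — is the crux and the step most likely to require the full strength of the hypothesis (or a cited structural result in the style of \cite{cdcmt}). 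Once $B$ is known to be a homomorphism the contrapositive delivers both negative bullets, and in the type $(3)^-$ case one records in addition, using antisymmetry of $B|_{G^+}$, that the elliptic set has exactly the form $\ker(B|_{G^+})\cup(G\setminus G^+)$ asserted in the statement.
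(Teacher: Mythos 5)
Your overall architecture matches the paper's: for the positive bullets you show the Busemann quasi-morphism $B$ of a reduced type (3) action is forced to be a genuine homomorphism and then invoke Theorem \ref{twchr} or Corollary \ref{rvrs}; for the negative bullets you argue by contraposition from the fact that on a proper or CAT(0) hyperbolic space the Busemann quasi-morphism is already a homomorphism. Your descent lemma is a correct and pleasant variation on the key step: the paper instead observes that all commutators lie in $\ker\theta$, so the homogeneous quasi-morphism $B$ vanishes on commutators, and then quotes Bavard (\cite{bvd} Lemma 3.6, that the defect of a homogeneous quasi-morphism equals $\sup_{g,h}|B(ghg^{-1}h^{-1})|$) to conclude the defect is zero. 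Your route --- descending $B$ to the abelian quotient $G/\ker\theta$ and using additivity of homogeneous quasi-morphisms on commuting elements --- is self-contained and avoids that citation; both arguments are sound.

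There are, however, two gaps. First and most seriously, the crux of both negative bullets --- that a type (3) isometric action on a \emph{proper} hyperbolic space has Busemann quasi-morphism equal to a genuine homomorphism --- is left as a plan rather than a proof: your proposed strategy via the horofunction compactification is not carried out, and it is precisely the step you flag as needing ``a cited structural result in the style of \cite{cdcmt}''. The paper resolves it exactly that way, by quoting \cite{cdcmt} Corollary 3.9 (which also records the CAT(0) case); as written, your second and fourth bullets are not established. Second, in the type (3)$^-$ positive bullet you assert that the hypothesized index 2 subgroup $H$ coincides with $G^+$, but this requires an argument, since a priori $H$ could be a different index 2 subgroup. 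The paper's proof: any $h\in H\setminus G^+$ is elliptic (being in $G\setminus G^+$), hence lies in $\ker\theta$; choosing $h_1\in H$ with $\theta(h_1)\neq 0$ (which exists because squares of loxodromic elements lie in $H$ and are non-elliptic, hence outside $\ker\theta\cup(G\setminus H)$), both $h_1$ and $hh_1$ are non-elliptic and so lie in $G^+$, forcing $h=(hh_1)h_1^{-1}\in G^+$, a contradiction. Without this identification your appeal to Corollary \ref{rvrs}, which produces a dihedral quasi-action relative to $G^+$ rather than relative to an arbitrary $H$, does not connect to the hypothesis of the third bullet.
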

\begin{proof}
  First suppose that we are in the type (3)$^+$ case.
Say there is no homomorphism
$\theta:G\rightarrow\R$ with $ker(\theta)$ equal to the set of elliptic
elements of $\alpha$. Elements of $G$
have the same type under any reduction, with type (3) actions
having no parabolic elements. Thus if there were a reduction to an isometric
translation action on $\R$ then this Busemann quasi-morphism would just be the
homomorphism obtained by this translation action, with the kernel equal to
the set of elliptic elements of this action and so of $\alpha$ too.
Furthermore in \cite{cdcmt} Corollary 3.9 it is shown that
for any action on a proper hyperbolic metric space with a fixed point on the
boundary, the Busemann quasi-morphism so obtained is a
homomorphism. It is also mentioned there that this is true for any hyperbolic
space (not necessarily proper) that is CAT(0).

Now suppose the set of elliptic elements of $\alpha$ agrees exactly
with the kernel of a homomorphism $\theta:G\rightarrow\R$. We reduce
$\alpha$ to any isometric action on a hyperbolic space, which
will be of type (3)$^+$. We take the
Busemann quasi-morphism $B$ of this action, which will be homogeneous
and whose zero set is exactly $ker(\theta)$, because the elliptic
and non elliptic elements are unchanged under reduction. In particular
we have $B(ghg^{-1}h^{-1})=0$ for every commutator of $G$ because these
elements all lie in the kernel. But by Barvard (\cite{bvd} Lemma 3.6)
we have for $B$ a homogeneous quasi-morphism that the supremum of
$|B(ghg^{-1}h^{-1})|$ over $g,h\in G$ is the defect of $B$, so here it is zero
implying that $B$ is a homomorphism. So on now applying Theorem \ref{twchr},
the resulting quasi-action on $\R$ is an isometric action.

Finally say that the quasi-action can be reduced to some
type (3)$^-$ action on a hyperbolic space. The first part of the
proof above goes through as well for this case by setting $H$ in the
statement of our theorem to be
the index two orientation preserving subgroup $G^+$ of the action.
This also works for the second part
by applying Corollary \ref{rvrs} instead of Theorem \ref{twchr},
except that we need to show that $G^+$ is equal to the given index 2 subgroup
$H$. If not then we can take some element $h$ in $H\setminus G^+$. As all
elements in $G\setminus G^+$ are elliptic in a type (3)$^-$ action, we must
have $\theta(h)=0$ for $\theta$ our given homomorphism from $H$ to $\R$.
But on taking some $h_1\in H$ with $\theta(h_1)=\theta(hh_1)\neq 0$
(such an $h_1$ exists because there are loxodromic elements in $G$ and their
square will lie in $H$),
we have that $h_1$ and $hh_1$ are not elliptic so they must lie in $G^+$.
Thus $hh_1h_1^{-1}$ is in $G^+$ which is a contradiction.
\end{proof}
\begin{ex}
On taking the free group $F_2$ and a quasi-morphism $q$ which is not
within bounded distance of any homomorphism, say a Brooks quasi-morphism,
we have that the seemingly innocuous translation
quasi-action $\alpha$ of $F_2$ on $\R$ given by $\alpha(g,x)=x+q(g)$
cannot be reduced to any isometric action on a CAT(0) space or on a proper
hyperbolic space. (Indeed it cannot be reduced to an isometric action
on any proper metric space, say by \cite{mrgo} Proposition 4.5 and Theorem
4.38.)
\end{ex}

We would now like to deal with our first point: if we turn a quasi-action
of some group $G$
into an isometric action of type (3) then do we always get the same
Busemann quasi-morphism $B$
(once we have homogenised)? Of course we can always rescale the metric, so
that $B$ and $\lambda B$ can occur for any $\lambda\neq 0$. However
\cite{abos} Corollary 4.16 gives plenty of examples of
homogeneous quasi-morphisms of a group where the zero
set is just the identity. Indeed this can happen with homomorphisms, for
instance take $G=\Z^2=\langle x,y\rangle$ and homomorphisms
$\theta_\pm:G\rightarrow\R$ given by $\theta_\pm(x)=1$,
$\theta_\pm(y)=\pm\sqrt{2}$. Then $\theta_+$ and $\theta_-$ do not look
equivalent but both have zero kernel.

It turns out that scaling is the only ambiguity.
\begin{prop} \label{smqm}
  Suppose that $\alpha$ is a quasi-action of a group $G$ on a metric space
  which can be reduced to an isometric action
  of type (3) on a hyperbolic space. If $\beta$ and $\gamma$  are two
  such reductions (not necessarily to the same hyperbolic space) then
  the homogeneous Busemann quasi-morphisms
  $B_\beta,B_\gamma$ are related by scaling.
\end{prop}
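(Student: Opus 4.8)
The plan is to push both reductions all the way down to $\R$ and compare them there. In the type (3)$^+$ case Theorem \ref{twchr} shows that the isometric action $\beta$ reduces to the quasi-action $\beta'$ which is translation by $B_\beta$ on $\R$, and likewise $\gamma$ reduces to $\gamma'$, translation by $B_\gamma$; in the type (3)$^-$ case Corollary \ref{rvrs} does the same with the dihedral quasi-actions given by $B_\beta$ and $B_\gamma$. Since reduction is transitive, $\alpha$ reduces to both $\beta'$ and $\gamma'$. Each of these is cobounded on $\R$: a type (3) action contains a loxodromic element $l$, so $B_\beta(l)=c\neq 0$, and the image of the homogeneous $B_\beta$ already contains $\{nc\,|\,n\in\Z\}$, which is coarse dense in $\R$ (and similarly for $B_\gamma$). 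Hence Proposition \ref{mnrd} (ii) applies and $\beta'$, $\gamma'$ are equivalent quasi-actions, giving a coarse $G$-equivariant quasi-isometry $\Phi:\R\to\R$.

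Next I would reduce everything to the translation core. Both $\beta$ and $\gamma$ have the same parity, as this is preserved under reduction. In the type (3)$^-$ case the orientation-preserving subgroups coincide: comparing signs of slopes in the intertwining relation for $\Phi$ shows that an element acts by a coarsely increasing map under $\beta'$ exactly when it does under $\gamma'$, whether $\Phi$ is itself coarsely increasing or decreasing, so the set of such elements is a common index $2$ subgroup $G^+$. Writing $G^+$ for this subgroup (all of $G$ in the (3)$^+$ case) and restricting to it, both dihedral quasi-actions become honest translations $\beta'(h,x)=x+B_\beta(h)$ and $\gamma'(h,x)=x+B_\gamma(h)$ for $h\in G^+$, and the equivalence yields a constant $M$ with $|\Phi(x+B_\beta(h))-\Phi(x)-B_\gamma(h)|\leq M$ for all $x\in\R$ and $h\in G^+$.

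The heart of the argument is to show that $\Phi$ is coarsely affine. Set $\rho(a)=\Phi(a)-\Phi(0)$. Comparing the intertwining relation at a general $x$ with the case $x=0$ shows that the increment $\Phi(x+a)-\Phi(x)$ is within $2M$ of $\rho(a)$ whenever $a$ lies in the image of $B_\beta$; since that image is coarse dense and $\Phi$ is a $(K,\epsilon)$ quasi-isometry, this extends to a uniform bound $|\Phi(x+a)-\Phi(x)-\rho(a)|\leq P$ for all $x,a\in\R$. Splitting $\rho(a+b)$ as $[\Phi(a+b)-\Phi(b)]+[\Phi(b)-\Phi(0)]$ then shows that $\rho$ is a quasi-morphism on the abelian group $(\R,+)$. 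Its homogenisation $\overline\rho$ is a homogeneous quasi-morphism on an abelian group, hence a genuine homomorphism by Bavard (\cite{bvd} Lemma 3.6, exactly as used in the proof of Theorem \ref{dpell}); and the quasi-isometry bound $|\rho(a)|\leq K|a|+\epsilon$ forces $\overline\rho$ to be locally bounded, so the additive homomorphism $\overline\rho:\R\to\R$ is linear, say $\overline\rho(a)=\lambda a$. Thus $\rho(a)=\lambda a+O(1)$.

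Finally, putting $x=0$ gives $B_\gamma(h)\approx\Phi(B_\beta(h))-\Phi(0)=\rho(B_\beta(h))\approx\lambda B_\beta(h)$, all within a fixed bound, so $B_\gamma$ and $\lambda B_\beta$ are homogeneous quasi-morphisms at bounded distance and are therefore equal; as $\Phi$ is a quasi-isometry we have $\lambda\neq 0$, which is the claimed scaling. I expect the main obstacle to be the uniform step in the third paragraph, namely upgrading the coarse $x$-independence of the increments of $\Phi$ from the coarse dense image of $B_\beta$ to all of $\R$ so that $\rho$ is genuinely a quasi-morphism; the secondary bookkeeping issue is verifying in the type (3)$^-$ case that the orientation-preserving subgroup is intrinsic to $\alpha$, so that $B_\beta$ and $B_\gamma$ really are defined on the same domain.
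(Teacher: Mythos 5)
Your proposal is correct, and while the framing (reduce both actions to $\R$ via Theorem \ref{twchr}/Corollary \ref{rvrs}, invoke Proposition \ref{mnrd} (ii) to get a coarse $G$-equivariant quasi-isometry $\Phi$ of $\R$, and handle type (3)$^-$ by first identifying the orientation preserving subgroups and restricting) matches the paper exactly, the heart of your argument is genuinely different. The paper only ever uses the intertwining inequality at $x=0$, extracting the single consequence that subsets of $G$ on which $|B_\beta|$ is bounded also have $|B_\gamma|$ bounded; it then pins down the ratio by a purely group-theoretic trick, testing this on the elements $g^{-m_n}h^n$ with $m_n=\lfloor B_\beta(h^n)\rfloor$ and using homogeneity of both quasi-morphisms to force $B_\beta(h)=B_\gamma(h)$ after normalising at one loxodromic $g$. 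You instead use the intertwining relation at all $x$ together with coarse density of the image of $B_\beta$ to show that $\Phi$ is coarsely affine: $\rho(a)=\Phi(a)-\Phi(0)$ is a quasi-morphism on $(\R,+)$, its homogenisation is a homomorphism by Bavard (defect equals the supremum over commutators, which vanish here), and local boundedness from the quasi-isometry constants forces linearity, $\overline\rho(a)=\lambda a$ with $\lambda\neq 0$ by the lower quasi-isometry bound. Your route is more conceptual --- it identifies the intertwiner itself as a coarse dilation, which is arguably the ``real'' reason the Busemann quasi-morphisms agree up to scale --- at the cost of importing the regularity step for Cauchy's functional equation; the paper's route is more elementary and stays entirely inside the group, never needing to analyse $\Phi$ beyond its value near $0$. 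Your treatment of the (3)$^-$ case via coarse monotonicity of the conjugated maps is also a valid alternative to the paper's argument (which reruns the loxodromic-element argument from the end of Theorem \ref{dpell}); both correctly establish that the orientation preserving subgroup is the same for the two reductions. The one step worth writing out carefully is the upgrade you yourself flag, from $a\in\mathrm{Im}(B_\beta)$ to all $a\in\R$, but this is routine: any $a$ is within $|B_\beta(l)|$ of the coarse dense set $\{nB_\beta(l)\}$ and $\Phi$ distorts such a perturbation by at most $K|B_\beta(l)|+\epsilon$ on each side of the inequality.
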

\begin{proof}
First assume that $\beta$ is of type (3)$^+$, in which case so is $\gamma$.
  Applying Theorem \ref{twchr} to both $\beta$ and $\gamma$, each can be reduced
to the translation quasi-actions on $\R$ by the Busemann quasi-morphisms
$B_\beta$ and $B_\gamma$ respectively, which we will call $\beta'$ and
$\gamma'$. As these quasi-actions on $\R$ are
both cobounded (unless one quasi-morphism is zero, in which case
$\alpha$ is bounded and so both are), they are equivalent by Proposition
\ref{mnrd} (ii). Thus we have a $(K,\epsilon,C)$
quasi-isometry $F:\R\rightarrow \R$ which
is $G$-coarse equivariant from $\beta'$ to $\gamma'$. The latter point
means we have $M\geq 0$ such that for all $g\in G$ and $x\in\R$,
\[d_\R (F\beta'(g,x),\gamma'(g,Fx))\leq M\mbox{ so }
|F(x)+B_\gamma(g)-F(x+B_\beta(g))|\leq M.\]  
Putting $x=0$ in this equation allows us to establish our key property,
which is that if we have a subset $S$ of $G$ such that
the values of $|B_\beta(g)|$ over $g\in S$ form 
a bounded subset of $\R$
then $|F(0)+B_\gamma(g)-F(B_\beta(g))|\leq M$ implies
\[|B_\gamma(g)|\leq M+K|B_\beta(g)|+\epsilon\]
and so the values of $|B_\gamma(g)|$ over the same subset $S$ are bounded too.

Now take any $g\in G$ with $B_\beta(g)\neq 0$ (so that $B_\gamma(g)\neq 0$
either as equivalent quasi-actions have the same loxodromic elements).
We rescale $B_\beta,B_\gamma$ so that both have value 1 on $g$. For any $h\in G$
and $n\in N$, we choose $m_n\in\Z$ to be the integer below $B_\beta(h^n)$,
so that we have
\[m_n\leq B_\beta(h^n)<m_n+1.\]
As $B_\beta$ and $B_\gamma$ are both homogeneous quasi-morphisms, we obtain
\[|B_\beta(g^{-m_n}h^n)+m_n-nB_\beta(h)|\leq D_\beta\]
and the equivalent equation for $\gamma$. As $|m_n-nB_\beta(h)|\leq 1$
by construction, we have that $|B_\beta(g^{-m_n}h^n)|$ is bounded over $n\in\N$.
Therefore so is  $|B_\gamma(g^{-m_n}h^n)|$ by our point above, which implies
that $|m_n-nB_\gamma(h)|$ is bounded too. Combining this with
$|m_n-nB_\beta(h)|\leq 1$ tells us that $n |B_\beta(h)-B_\gamma(h)|$ is also
bounded and hence $B_\beta(h)=B_\gamma(h)$, implying that $B_\beta=B_\gamma$ after
rescaling.

As for the case where $\beta$ is an action of type of (3)$^-$, the Busemann
quasi-morphism of this action is by definition the Busemann quasi-morphism
of the index 2 orientation
preserving subgroup $G^+$, so we restrict $\beta$ to $G^+$ whereupon it
becomes a type (3)$^+$ action. We do the same for $\gamma$ and its orientation
preserving subgroup $G^*$, but the argument at the end of Theorem \ref{dpell}
works to show that $G^+$ and $G^*$ are the same index 2 subgroup even if
$\theta$ is a homogeneous quasi-morphism, provided our loxodromic element
$h_1$ has $\theta(h_1)$ big enough, so we can take $h_1$ to be a large power
of itself if necessary. We now restrict $\beta$ and $\gamma$ to $G^+$,
whereupon they are still both reductions of the same quasi-action,
namely $\alpha$ restricted to $G^+$.
Now we can apply the first part of the proof to these restricted actions.
\end{proof}

Note that for two translation (respectively dihedral) quasi-actions on $\R$, if
their Busemann quasi-morphisms are related by scaling then they are
equivalent quasi-actions. This then gives us a complete and concrete description
of all type (3)$^+$ and type (3)$^-$ cobounded actions, up to equivalence,
of a given group $G$ over all hyperbolic metric spaces: for type (3)$^+$
actions, it is the projective vector space of homogeneous quasi-morphisms
of $G$. For type (3)$^-$ actions it is the disjoint union over each index
2 subgroup $H$ of the projective vector space of homogeneous
quasi-morphisms of $H$ which are antisymmetric in $G$.

\subsection{Simplicial actions}
In this paper we have been interested in when
quasi-actions of a group $G$ on a hyperbolic space $X$ can be reduced
to genuine actions on a simplicial tree. These actions have always been
isometric actions but we might want to ask whether they can be taken
to be simplicial actions. This is only relevant to type (3) actions
because for all trees other than the simplicial line $L$ (namely $\R$
with the simplicial structure where the vertices are $\Z$), any isometry
is a simplicial automorphism. However for $L$ the simplicial automorphisms
form a proper subgroup of of the isometry group and it could be for
some applications we only want to allow the former. Therefore we give
here the equivalent version of Theorem \ref{dpell}, where the obvious
condition of taking homomorphisms to $\Z$ rather than to $\R$
does indeed hold. 
\begin{co}
  Given the same hypotheses as Theorem \ref{dpell}, we have that
  a reduction of $\alpha$ to a simplicial translation/dihedral action
  of $G$ on the simplicial line $L$ exists or does not exist exactly
  under the same conditions as Theorem \ref{dpell}, other than that all
  homomorphisms from $G$ or an index 2 subgroup are to $\Z$ rather than
  to $\R$.
\end{co}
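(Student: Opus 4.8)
The plan is to follow the proof of Theorem \ref{dpell} almost verbatim, the single new ingredient being a description of the simplicial automorphism group of $L$ together with a rescaling argument that upgrades an $\R$-valued Busemann homomorphism to a $\Z$-valued one. Recall that the orientation preserving simplicial automorphisms of $L$ are exactly the integer translations $x\mapsto x+n$ with $n\in\Z$, so they form a copy of $\Z$, whereas the orientation reversing simplicial automorphisms are the reflections $x\mapsto -x+n$ with $n\in\Z$. Thus a type (3)$^+$ simplicial action of $G$ on $L$ is precisely a homomorphism $\psi\colon G\to\Z$ acting by translation, and a type (3)$^-$ simplicial action is precisely a dihedral action given by a homomorphism $\psi$ on the orientation preserving index 2 subgroup $G^+$, with the elements outside $G^+$ acting as reflections.

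First I would treat the (easier) non-existence directions. Here I deliberately do not quote the non-existence part of Theorem \ref{dpell}, since its hypothesis concerns homomorphisms to $\R$ and is strictly stronger than what we are assuming; the whole interest of the corollary is that a weaker hypothesis suffices to block the simplicial conclusion. Instead I argue directly: suppose $\alpha$ reduces to a simplicial action on $L$. By Proposition \ref{qpres} the division of elements into elliptic and loxodromic is preserved under reduction, so the elliptic elements of $\alpha$ coincide with those of this simplicial action. In the type (3)$^+$ case the elliptic elements of a translation action by $\psi\colon G\to\Z$ are exactly $\ker\psi$, giving the forbidden conclusion that the elliptic set is the kernel of a homomorphism to $\Z$. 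In the type (3)$^-$ case every reflection has a fixed point and so is elliptic, whence the elliptic set is $\ker(\psi)\cup(G\setminus G^+)$ for $\psi\colon G^+\to\Z$, again the forbidden form. Contrapositively, if no such $\Z$-homomorphism exists then no simplicial reduction to $L$ can exist.

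For the existence directions I would start from Theorem \ref{dpell} applied with the given $\Z$-valued homomorphism $\theta$ viewed as a homomorphism to $\R$. This produces a reduction of $\alpha$ to the translation (respectively dihedral) quasi-action on $\R$ given by the homogeneous Busemann quasi-morphism $B$, which by the argument in the proof of Theorem \ref{dpell} is in fact a homomorphism on $G$ (respectively on $G^+=H$) whose kernel is exactly the elliptic set intersected with the relevant subgroup, namely $\ker\theta$. The crux is then to observe that since $\theta$ maps onto a nontrivial subgroup of $\Z$, the quotient by $\ker\theta$ is infinite cyclic; as both $B$ and $\theta$ factor through this same cyclic quotient, $B$ must be a scalar multiple $B=c\,\theta$. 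Applying the $G$-equivariant bi-Lipschitz map $x\mapsto x/c$ then carries translation/dihedral by $B$ to translation/dihedral by $\theta$; this is an equivalence of quasi-actions, and since $\theta$ takes values in $\Z$ the resulting action is by integer translations (and, in the dihedral case, reflections of the form $x\mapsto -x+n$ with $n\in\Z$), hence genuinely simplicial on $L$. Composing the reduction with this equivalence gives the desired simplicial reduction.

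The main obstacle, and the whole point of the corollary, is this rescaling step: it works precisely because the existence of a $\Z$-valued homomorphism forces $G/\ker\theta$ (or $H/\ker\theta$) to be cyclic, which is exactly what lets us normalise the a priori real valued Busemann homomorphism to integer values. This is where the simplicial case diverges from the isometric case of Theorem \ref{dpell}: if the elliptic set were the kernel of a homomorphism to $\R$ but of none to $\Z$ --- for instance the $\Z^2$ example with irrational slope appearing just before Proposition \ref{smqm} --- then $G/\ker$ would be a dense, non cyclic subgroup of $\R$ and no rescaling could render $B$ integer valued, so an isometric reduction to $\R$ exists while no simplicial reduction to $L$ does. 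For the dihedral existence direction I would additionally need, as in the final paragraph of the proof of Theorem \ref{dpell}, that the orientation preserving subgroup $G^+$ of the reduced action coincides with the prescribed index 2 subgroup $H$; that identification is already established there and carries over unchanged.
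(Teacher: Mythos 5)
Your proposal is correct and follows essentially the same route as the paper: the necessity direction is the elliptic-set argument from Theorem \ref{dpell} with $\Z$ in place of $\R$, and the sufficiency direction invokes Theorem \ref{dpell} to get a Busemann homomorphism $B$ with $\ker B=\ker\theta$ and then rescales $B$ to the $\Z$-valued $\theta$. Your explicit justification of the rescaling step via the cyclic quotient $G/\ker\theta$ is exactly what the paper means when it asserts that $B$ ``is just a rescaling of $\theta$'', so there is nothing substantively different to report.
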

\begin{proof} Here we take both type (3)$^+$ and type (3)$^-$ actions together.
  The proof of the existence of such a homomorphism being a necessary
  condition is exactly as in the
  proof of Theorem \ref{dpell}, other than on changing $\R$ to $\Z$.

  As for sufficiency, the proof of Theorem \ref{dpell} again gives us that our
  Busemann quasi-morphism $B$ is a genuine homomorphism from $G$ to $\R$,
  so that Theorem \ref{twchr} or Corollary \ref{rvrs} gives us an
  isometric translation/dihedral action of $G$ on $\R$ by $B$.
  But moreover the kernel of $B$ agrees with the kernel of some homomorphism
  $\theta$ from $G$ (or from $G^+$) 
  to $\Z$. This means that $B$ is just a rescaling
  of $\theta$ and so we can rescale this isometric action of $G$ on $\R$
  to make the Busemann quasi-morphism equal to $\theta$, whereupon the
  action on $L$ is now simplicial.
\end{proof}

\section{Applications}

\subsection{Splittings of groups from quasi-actions}
We finish by applying our results to some general situations, with an
emphasis on finitely generated groups. Indeed
by results in \cite{ser} we know that a finitely generated group has
an unbounded action
on a tree (equivalently has an action without a
global fixed point) if and only if it splits as an amalgamated
free product or HNN extension. In this statement
the action is assumed to be simplicial
(we can subdivide all edges if any are inverted).
Thus Theorem \ref{main} now becomes:
\begin{thm} \label{summ}
  Let $G$ be any finitely generated group having an unbounded quasi-action
  on any metric space where the quasi-orbits both quasi-isometrically embed
  in some tree and also into some proper metric space. Either the quasi-orbits
  are quasi-isometric to $\R$ or $G$ splits non trivially as a finite graph
  of groups, where the image of any edge group under inclusion into a vertex
  group has finite index.
\end{thm}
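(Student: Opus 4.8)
The plan is to feed this quasi-action into Theorem \ref{main} and then convert the resulting tree action into a splitting via Bass--Serre theory. First I would note that because $G$ is finitely generated, Proposition \ref{fgn} (i) tells us that every quasi-orbit is coarse connected, so all the hypotheses of Theorem \ref{main} hold and that theorem applies to $\alpha$. Since $\alpha$ is unbounded, no quasi-orbit is a bounded subset, which excludes the third alternative (reduction to the trivial action on a point). Thus exactly one of the two remaining cases holds: either the quasi-orbits are quasi-isometric to $\R$, which is precisely the first clause of the conclusion and needs nothing further, or $\alpha$ reduces to a cobounded isometric action of $G$ on a bounded valence bushy tree $T'$.

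In the second case I would promote this isometric action to a simplicial one. As $T'$ is bushy it is not the simplicial line, and any isometry of a simplicial tree other than the line is a simplicial automorphism, so $G$ acts on $T'$ by simplicial automorphisms; if some edge is inverted we subdivide each edge once, which preserves bounded valence and coboundedness and removes all inversions. The action is unbounded, since by Proposition \ref{qpres} (1) the reduced action is unbounded exactly when $\alpha$ is, so there is no global fixed vertex and Serre's theory yields a non-trivial splitting of $G$ as a graph of groups with vertex groups the vertex stabilisers and edge groups the edge stabilisers.

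Finally I would check the two finiteness properties, which is where the bounded valence of $T'$ does the real work. For the quotient graph, coboundedness supplies a constant $D$ with every vertex of $T'$ within $D$ of the orbit $G\cdot v_0$ of a fixed vertex $v_0$; applying a suitable group element moves any vertex orbit to meet the ball $B(v_0,D)$, which contains only finitely many vertices because $T'$ has bounded valence, so there are finitely many orbits of vertices and likewise of edges and $G\backslash T'$ is finite. The main point to verify carefully is the finite-index claim: for an edge $e$ incident to a vertex $v$ of the inversion-free tree, the edge stabiliser $G_e$ lies in $G_v$ and the index $[G_v:G_e]$ equals the size of the $G_v$-orbit of $e$ inside the set of edges at $v$, which is at most the valence of $v$. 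Bounded valence therefore bounds every such index by a single constant, so the image of each edge group in each vertex group has finite index, completing the splitting with the asserted property.
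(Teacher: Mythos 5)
Your proposal is correct and follows essentially the same route as the paper: apply Proposition \ref{fgn} (i) to get coarse connected quasi-orbits, invoke Theorem \ref{main}, use unboundedness to rule out the point case, note that a bushy tree is not the simplicial line so the action is simplicial (after subdividing to remove inversions), and then derive the finite quotient graph from coboundedness plus bounded valence and the finite-index property of edge groups from bounded valence. The only difference is that you spell out the orbit-counting and orbit--stabiliser details that the paper leaves as assertions.
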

\begin{proof}
  By Proposition \ref{fgn} (i) the quasi-orbits are coarse connected, so
  that Theorem \ref{main} applies. If the quasi-orbits are not quasi-isometric
  to $\R$ then we must be in the case where the quasi-action reduces
  to a cobounded
  isometric action on a bounded valence bushy tree. As the original
  quasi-action is unbounded, so is this action. Moreover a bushy tree is
  not the simplicial line, so this action is also by simplicial automorphisms
  and thus Serre's result gives us a splitting. As the tree is of bounded
  valence and the action is cobounded, the quotient graph is finite.
  The statement about edge groups is because the tree is of bounded valence.
\end{proof}
\begin{co} \label{amn}
Suppose that a finitely generated group   
$G$ has the property that every quasi-morphism of $G$ and of its
index 2 subgroups is within
bounded distance of a homomorphism (for instance amenable groups).
If $G$ has an unbounded quasi-action
  on any metric space where the quasi-orbits both quasi-isometrically embed
  in some tree and also into some proper metric space then $G$ splits non
  trivially as a finite graph
  of groups, where the image of any edge group under inclusion into a vertex
  group has finite index.
\end{co}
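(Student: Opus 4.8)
The plan is to invoke Theorem \ref{summ}, which already delivers the desired splitting unless the quasi-orbits are quasi-isometric to $\R$; so the entire content of this corollary is to treat that remaining case using the hypothesis on quasi-morphisms. Thus assume that the quasi-orbits of our quasi-action $\alpha$ are quasi-isometric to $\R$. Since $G$ is finitely generated, Proposition \ref{fgn}(i) shows these quasi-orbits are coarse connected, and being quasi-isometric to $\R$ they are quasi-geodesic spaces; hence by Lemma \ref{ccnq}(iii) I may reduce $\alpha$ to a cobounded isometric action $\gamma$ on a geodesic space $Y$. As $Y$ is quasi-isometric to a quasi-orbit it is a quasi-line, in particular hyperbolic, and since its orbits are unbounded but have only two accumulation points on $\partial Y$, the action $\gamma$ is of hyperbolic type (3) (it contains a loxodromic element, so is not of types (1), (2), and cannot be of types (4) or (5)). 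I then take the homogeneous Busemann quasi-morphism $B$ of $\gamma$, defined on $G$ in the type (3)$^+$ case and on the index two orientation preserving subgroup $G^+$ in the type (3)$^-$ case.

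The key step is to upgrade $B$ to a homomorphism onto $\Z$. By hypothesis every quasi-morphism of $G$ and of its index two subgroups lies within bounded distance of a homomorphism; applying this to $B$, and using that a homomorphism is itself homogeneous together with the fact that a homogeneous quasi-morphism is the unique homogeneous representative of its bounded-distance class, $B$ must coincide with that homomorphism. Hence $B$ is a genuine homomorphism to $\R$, and it is nonzero because $\gamma$ is of type (3) and so has a loxodromic element $l$ with $B(l)\neq 0$. Its image $\Lambda$ (equal to $B(G)$, respectively $B(G^+)$) is therefore a nonzero finitely generated, hence free abelian, subgroup of $\R$, and I may choose a surjective homomorphism $\mu:\Lambda\rightarrow\Z$. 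Setting $\psi=\mu\circ B$ gives a nonzero homomorphism to $\Z$. In the type (3)$^-$ case $B$ is antisymmetric in $G$ (by Corollary \ref{rvrs}), and since $\mu$ is a group homomorphism we get $\psi(tht^{-1})=\mu(-B(h))=-\psi(h)$ for $t\in G\setminus G^+$ and $h\in G^+$, so $\psi$ is antisymmetric in $G$ as well.

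Finally, translation by $\psi$ (type (3)$^+$), or the dihedral action given by $\psi$ (type (3)$^-$, which is a genuine simplicial action precisely because $\psi$ is an antisymmetric homomorphism, exactly as in the construction preceding Corollary \ref{rvrs}), furnishes an unbounded simplicial action of $G$ on the simplicial line $L$. As $\psi$ has image of finite index in $\Z$ this action is cobounded, so by Serre's theory (\cite{ser}) it yields a nontrivial splitting of $G$ as a graph of groups; coboundedness together with $L$ having valence two forces the quotient graph to be finite and each edge group to have index at most two in the adjacent vertex groups, which is the stated conclusion. I expect the only genuine obstacle to be that $\Lambda$ may have rank greater than one, so that $\alpha$ itself need not reduce to a simplicial action matching its set of elliptic elements (and the simplicial criterion following Theorem \ref{dpell} need not apply on the nose); this is sidestepped by projecting $B$ onto a cyclic quotient, the point being that such a projection is automatically a homomorphism and therefore automatically antisymmetric, so it still defines a simplicial action on $L$.
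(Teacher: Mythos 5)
Your proposal is correct and follows essentially the same route as the paper: dispose of the non-$\R$ case via Theorem \ref{summ}, observe that the homogeneous Busemann quasi-morphism of the reduced type (3) action must be a genuine nonzero homomorphism by the hypothesis on $G$ (or $G^+$), project its finitely generated image onto $\Z$ while preserving antisymmetry, and obtain an unbounded simplicial translation/dihedral action on the line yielding the splitting. The paper phrases the reduction step through Theorem \ref{twchr}/Corollary \ref{rvrs} rather than Lemma \ref{ccnq}(iii), but the substance is identical.
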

\begin{proof}
This follows by Theorem \ref{summ} unless the quasi-orbits are quasi-isometric
to $\R$. If so then by Theorem \ref{twchr} or Corollary \ref{rvrs} we have that
the quasi-action reduces to a translation/dihedral quasi-action with
respect to the relevant Busemann quasi-morphism $B$ of $G$ or an index 2
subgroup. But Busemann functions are homogeneous, thus $B$ must be a non
trivial homomorphism to $\R$ by the hypothesis on $G$.
Now a finitely generated group with such a
homomorphism to $\R$ will also have a surjective homomorphism $\theta$ say
to $\Z$. Thus in the orientation preserving case $G$ has a non trivial
simplicial action on the line. In the orientation reversing case with
$G^+$ the index 2 orientation preserving subgroup of $G$ and $t$ any fixed
element of $G\setminus G^+$ as before, we have our non trivial homomorphism
$B$ from $G^+$ to $\R$ which we know satisfies $B(tgt^{-1})=-B(g)$ for all
$g\in G^+$. As the image $B(G^+)$ is a copy of $\Z^n$ in $\R$ for some
$n\geq 1$, we can compose with a surjective homomorphism $\chi$ from
$B(G^+)$ to $\Z$, whereupon $\chi B$ is a homomorphism from $G^+$ onto $\Z$
which is also antisymmetric in $G$. Then we have seen that sending
$g$ in $G^+$ to $x\mapsto x+\chi B(g)$ and $tg$ to $-x-\chi B(g)$ is a
homomorphism of $G$ to $Isom(\R)$. But as the image of $\chi B$ is $\Z$,
this is also a homomorphism to the dihedral group $Isom(\Z)$ which is onto.
Thus $G$ has a non trivial simplicial action on the line in this case too.
\end{proof}

We now convert this into a statement about actions of groups on proper
quasi-trees. This differs vastly from the case without the finiteness
condition of being proper, which was considered in \cite{balas}.
\begin{co} \label{qttt}
  Suppose that $G$ is any finitely generated group with an unbounded
  isometric action on any proper quasi-tree $X$.
Then $G$ acts by automorphisms
and without a global fixed point on some simplicial tree with bounded valence.
Thus $G$ has the structure of a fundamental group of a (non trivial) finite
graph of groups where all edge groups have finite index in all of the
respective vertex groups.
\end{co}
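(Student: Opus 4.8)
The plan is to reduce everything to Theorem \ref{summ} and then settle the one case it leaves open using the fact that $X$ is proper. First I would verify the hypotheses of Theorem \ref{summ} (equivalently Theorem \ref{main}): as $G$ is finitely generated, every quasi-orbit is coarse connected by Proposition \ref{fgn} (i); and as $X$ is a proper quasi-tree it is quasi-isometric to a simplicial tree, so each orbit (with the subspace metric) quasi-isometrically embeds into a tree via inclusion followed by this quasi-isometry, while inclusion alone embeds it into the proper space $X$. The action is unbounded, so the quasi-orbits are unbounded and the trivial (point) case is excluded. Thus Theorem \ref{summ} leaves precisely two possibilities: either $G$ splits as the stated finite graph of groups, or the quasi-orbits are quasi-isometric to $\R$.

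In the first possibility we are finished, and indeed the splitting arises exactly as in the proof of Theorem \ref{summ}: the reduction lands on a cobounded isometric action on a bounded valence bushy tree, which is simplicial since a bushy tree is not the simplicial line, is unbounded since reduction preserves unboundedness (Proposition \ref{qpres} (1)), and has finite quotient graph with edge-to-vertex inclusions of finite index because the tree has bounded valence and the action is cobounded.

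So the crux is the case where the quasi-orbits are quasi-isometric to $\R$, and here the hypotheses that the action is isometric and $X$ is proper become essential. Since a quasi-tree is quasi-isometric to a tree it is hyperbolic, and as the orbits are unbounded quasi-geodesic spaces with only two boundary accumulation points the action is lineal, i.e. of type (3) (see the discussion opening Section 9, together with Proposition \ref{orblk}). Now I would invoke properness of $X$: for an isometric action on a proper hyperbolic space fixing a boundary point, the Busemann quasi-morphism $B$ is a genuine homomorphism to $\R$ (\cite{cdcmt} Corollary 3.9, as used inside Theorem \ref{dpell}), and it is non-trivial because a type (3) action contains loxodromic elements, on which $B$ does not vanish. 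From this point the argument is identical to the $\R$-case of Corollary \ref{amn}: a finitely generated group carrying a non-trivial homomorphism to $\R$ admits a surjection onto $\Z$, and composing with $B$ produces, in the type (3)$^+$ case, a non-trivial simplicial translation action of $G$ on the simplicial line $L$, and in the type (3)$^-$ case a non-trivial simplicial dihedral action onto $Isom(\Z)$, using that $B$ restricted to the index two orientation preserving subgroup is antisymmetric in $G$. In either subcase $L$ is a bounded valence tree on which $G$ acts simplicially, coboundedly and without a global fixed point, so Bass-Serre theory again yields the splitting with finite quotient graph and edge groups of finite index.

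The main obstacle is this final case, and within it the single decisive step is upgrading the Busemann quasi-morphism from a mere quasi-morphism to an honest homomorphism; this is exactly where properness of $X$ is indispensable and is what distinguishes the present corollary from the general quasi-tree situation of \cite{balas}, where no splitting need exist. Everything after that, namely passing from a homomorphism to $\R$ to one onto $\Z$ and hence from an isometric to a genuinely simplicial action on the line, is routine given finite generation.
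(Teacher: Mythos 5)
Your proposal is correct and follows essentially the same route as the paper: reduce to Theorem \ref{summ}, and in the remaining case where quasi-orbits are quasi-isometric to $\R$, use properness of $X$ together with \cite{cdcmt} Corollary 3.9 to see that the Busemann quasi-morphism of the (necessarily lineal) action on $X$ is a genuine non-trivial homomorphism, then conclude exactly as in Corollary \ref{amn}. The extra detail you supply in verifying the hypotheses of Theorem \ref{summ} and in spelling out the passage to a simplicial action on the line is consistent with what the paper leaves implicit.
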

\begin{proof}
  This is again Theorem \ref{summ} unless quasi-orbits are
  quasi-isometric to $\R$. 
  If so then we can also apply Theorem \ref{twchr} or Corollary \ref{rvrs} to
  obtain a reduction of this action to a
translation/dihedral quasi-action of $G$ on $\R$ with
respect to the relevant Busemann quasi-morphism $B$ of $G$ or an index 2
subgroup. But $B$ is obtained as the Busemann quasi-morphism of the action
of $G$ on $X$ at either of the two limit points in $\partial X$ of an
orbit. This is a homomorphism if $X$ is a proper hyperbolic space
by \cite{cdcmt} Corollary 3.9. We can now follow the proof of
Corollary \ref{amn}.
\end{proof}
{\bf Example 1}:
We cannot replace $X$ is proper with $X$ is quasi-isometric to
  some proper space. To see this, take a hyperbolic group $H$ with property (T)
  and a homogeneous quasi-morphism $q$ of $H$ which is not a homomorphism
  (ie not the trivial homomorphism).
By \cite{abos} Lemma 4.15, we can obtain from $q$ an infinite generating
set $S$ for $H$ such that the Cayley graph $Cay(H,S)$ is quasi-isometric to
$\R$ (and $q$ is indeed the Busemann quasi-morphism of the isometric action
of $H$ on $Cay(H,S)$ by Proposition \ref{smqm}).\\
\hfill\\
{\bf Example 2}: Just because our finitely generated group $G$ splits
with respect to an action on a bounded valance tree, this does not imply that
any edge or vertex group will itself be finitely generated. Note that
these subgroups are commensurable in $G$, so if one is finitely
generated then they all are. But let $G$ be a closed orientable 
surface group of genus at least 2. Then $G$ surjects to $F_2$, so certainly
has a splitting where each edge group embeds with finite index
in its respective vertex groups. Note that any vertex group is commensurable
to any of its conjugates. But $G$ is a torsion free hyperbolic 
group where any finitely generated subgroup is quasi-convex. In
\cite{cnmh} Theorem 3.15
it is shown that an infinite quasi-convex subgroup of a hyperbolic
group has finite index in its commensurator. Hence
if such an action on a bounded valence tree existed with finitely generated
vertex/edge stabilisers then these would either have finite index in $G$,
so that actually the splitting is trivial, or the stabilisers are themselves
trivial in which case $G$ would be a free group.\\
\hfill\\
{\bf Example 3}: To give an example of a finitely presented group that
has unbounded actions on trees but no unbounded actions on locally finite
trees, take
a finitely presented group $G$ which has property (FA) so it has no unbounded
actions on any tree. Suppose that $G$ has a finitely generated subgroup $H$ of
infinite index in $G$ with the following
property: if $H$ is contained in a finite index subgroup $L$ of $G$ then
$L=G$. (This is sometimes known in the literature as $H$ is not engulfed
in $G$ or $H$ is dense in the profinite topology of $G$.) For instance
if $G$ is an infinite
finitely presented group having property (T) but with no proper
finite index subgroups then we can take $H$ to be trivial. Then
form the amalgamation $A=G_1*_{H_1=H_2}G_2$ where $G_1$ and $G_2$ are isomorphic
copies of $G$ with $H_1,H_2$ the corresponding copies of the subgroup $H$.
Now $A$ is finitely presented and has an unbounded action on the tree given
by this splitting. But suppose that $A$ acts on a locally finite tree. Then
$G_1$ fixes a vertex $v_1$ of this tree (subdividing if necessary) and $G_2$
fixes some vertex $v_2$. On taking a path from $v_1$ to $v_2$, the finite
valence means that the subgroup $L_1$ of $G_1$ which fixes
the path between $v_1$ and $v_2$ has finite index in $G_1$.
Now $H=H_1=H_2$ is a subgroup of both $G_1$ and $G_2$ and
so fixes both $v_1$ and $v_2$, thus $H$ is contained in $L_1$. But our chosen
property of $H_1$ means that $L_1$ is equal to $G_1$. Thus $G_1$ fixes the
path between $v_1$ and $v_2$, so certainly fixes $v_2$. But so does $G_2$,
meaning that $A=\langle G_1,G_2\rangle$ does as well.

We now have a brief consideration of groups that are not finitely
generated. In our main theorem \ref{main}, if the initial metric space $X$
is itself a simplicial tree $T$ and the quasi-action is an action by
automorphisms on $T$ then it might seem that the conclusion would
give nothing new. Recall that by standard facts, if there exists a
loxodromic element in this action then there is a unique minimal
$G$-invariant subtree $T_G$ of $T$. (If there is no loxodromic element
then either we have a bounded action, or a type (2) action whereupon
orbits cannot be coarse connected.) Now if $G$
is finitely generated then (say by \cite{dd} Proposition I.4.13) the
quotient graph $G\backslash T_G$ is finite and so certainly this is a
cobounded action. In this case we know the orbits are coarse connected and
this generalises to groups that need not be finitely generated.
\begin{prop} \label{mint}
  Let $G$ be an arbitrary group acting by isometries on a simplicial
  tree $T$ and with a loxodromic element.
  Then the action of $G$ on the minimal invariant subtree $T_G$ 
  is cobounded if and only if the orbits are coarse connected.
\end{prop}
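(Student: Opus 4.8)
The plan is to exploit the unique minimal invariant subtree $T_G$ together with Theorem \ref{sstt}, which says precisely that a subset of a tree is coarse dense in its convex closure exactly when it is coarse connected. The forward direction (cobounded implies coarse connected orbits) should be immediate: if some orbit $O=G\cdot x_0$ is coarse dense in $T_G$, then since $T_G$ is a subtree, hence a geodesic and so a coarse connected space, and since a coarse dense subset of a coarse connected space is again coarse connected (as recorded in the metric preliminaries), $O$ is coarse connected. Because all orbits of an isometric action are Hausdorff close — indeed $d(gx_1,gx_2)=d(x_1,x_2)$ shows $Gx_1$ and $Gx_2$ lie within Hausdorff distance $d(x_1,x_2)$ of one another — coarse connectedness of one orbit gives it for all, so the choice of basepoint will be irrelevant throughout.

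For the reverse direction I would fix a point $x_0\in T_G$ and set $O=G\cdot x_0\subseteq T_G$, which is coarse connected by hypothesis. The key step is to identify the convex closure $CCl(O)$, taken in $T$, with $T_G$ itself. On one side, since $O\subseteq T_G$ and $T_G$ is convex, we have $CCl(O)\subseteq T_G$. On the other side, $CCl(O)$ is $G$-invariant, because each $g\in G$ acts as an isometry and so carries the convex closure of the invariant set $O$ to itself, and being convex in a tree it is connected, hence a subtree; by minimality of $T_G$ it must therefore contain $T_G$. Thus $CCl(O)=T_G$. Now Theorem \ref{sstt}, applied to the subset $O$ of $T$, tells us that coarse connectedness of $O$ makes the inclusion $O\hookrightarrow CCl(O)=T_G$ a quasi-isometry; in particular $O$ is coarse dense in $T_G$, which is exactly coboundedness of the action on $T_G$.

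The main thing to get right is the identification $CCl(O)=T_G$, and specifically the inclusion $T_G\subseteq CCl(O)$. This rests only on the standard fact (available once there is a loxodromic element) that the minimal invariant subtree sits inside every nonempty $G$-invariant subtree; alternatively one can verify it by hand, observing that for any loxodromic $g$ the convex hull of a single $\langle g\rangle$-orbit already contains the axis of $g$, since the geodesics $[g^n x_0,g^m x_0]$ run along that axis and sweep it out entirely as $n\to-\infty$ and $m\to+\infty$. Hence $CCl(O)$ contains every such axis and therefore their union $T_G$. With this in hand the remainder is a direct appeal to Theorem \ref{sstt} and the elementary fact about coarse dense subsets, so no serious calculation is needed.
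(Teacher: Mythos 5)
Your proof is correct and follows essentially the same route as the paper: identify the convex closure of an orbit with $T_G$ via minimality and then apply Theorem \ref{sstt} to get coarse density from coarse connectedness. The only cosmetic difference is in the easy direction, where you use the elementary fact that a coarse dense subset of a coarse connected space is coarse connected, while the paper instead invokes Lemma \ref{ccnq}(ii); both are immediate.
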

\begin{proof}
If the orbits are not coarse connected then the
action on $T$, on $T_G$, or indeed on any $G$-invariant geodesic subspace
of $T$ cannot be cobounded by Lemma \ref{ccnq} (ii). Now take any vertex $v$ in
$T_G$. The convex closure of
$Orb(v)$ must be equal to $T_G$ by minimality and Theorem \ref{sstt} tells
us that if $Orb(v)$  is coarse connected then it is coarse dense in its
convex closure $T_G$.
\end{proof}
Note that for $G$ acting by automorphisms on a tree $T$, saying the action is
cobounded is the same as saying that the quotient graph $G\backslash T$ is
a bounded graph, even if this graph is not locally finite.
\begin{co}
  Suppose that the group $G$ acts unboundedly on the locally finite
  simplicial tree $T$ by isometries with coarse connected orbits.
  Then the minimal invariant subtree has bounded valence.
\end{co}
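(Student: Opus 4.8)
The plan is to first secure the existence of the minimal invariant subtree $T_G$, then upgrade ``coarse connected orbits'' to a cobounded action on $T_G$, and finally bound the valence directly by examining the quotient graph. The whole argument is essentially a combination of Proposition \ref{mint} with an elementary finiteness observation.

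First I would check that the action has a loxodromic element, so that $T_G$ is actually defined. Since the action is unbounded it is not of type (1), and a simplicial tree carries no parabolic isometries; hence if there were no loxodromic element, every element would be elliptic and the action would be of type (2). But a type (2) action has no quasi-geodesic orbit by Proposition \ref{orblk}, whereas by Theorem \ref{sstt} any coarse connected subset of a tree (with the subspace metric) is a quasi-geodesic space. As our orbits are coarse connected, this is a contradiction, so a loxodromic element exists and $T_G$ is the unique minimal $G$-invariant subtree. Proposition \ref{mint} then applies: because the orbits are coarse connected, the action of $G$ on $T_G$ is cobounded.

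Next I would pass to the quotient graph $\Gamma = G\backslash T_G$. As remarked after Proposition \ref{mint}, coboundedness of the action is equivalent to $\Gamma$ being a bounded graph, i.e.\ of finite diameter; moreover $\Gamma$ is connected since $T_G$ is. The key point is that $\Gamma$ is locally finite: being a subtree of the locally finite tree $T$, the tree $T_G$ is itself locally finite, so each vertex $v$ of $T_G$ has finitely many incident edges, whence its image in $\Gamma$ meets only finitely many edge-orbits and so has finite valence. A connected, locally finite graph of finite diameter is finite (every ball of finite radius about a vertex is finite, and the graph is such a ball). Thus $\Gamma$ has finitely many vertices, i.e.\ the action has only finitely many vertex orbits.

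Finally, since $G$ acts by automorphisms, the valence of a vertex is constant on each $G$-orbit; with only finitely many orbits and each valence finite by local finiteness, the set of valences occurring in $T_G$ is finite, hence bounded. The step I expect to be the main obstacle is the first one, namely correctly ruling out the elliptic / type (2) alternative so that $T_G$ and the coboundedness supplied by Proposition \ref{mint} are genuinely available; once these are in hand the remainder is the routine observation that a bounded, connected, locally finite quotient is finite.
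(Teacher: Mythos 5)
Your proof is correct and follows essentially the same route as the paper: the existence of a loxodromic element is secured exactly as in the discussion preceding Proposition \ref{mint} (no parabolics on trees, type (2) excluded because coarse connected subsets of trees are quasi-geodesic by Theorem \ref{sstt} while Proposition \ref{orblk} forbids this for parabolic actions), Proposition \ref{mint} then gives coboundedness on $T_G$, and finitely many vertex orbits each of finite valence give the bound. The one point you pass over is that the hypothesis is an action by isometries rather than automorphisms, so the claim that valence is constant on orbits needs the observation (made in one clause in the paper) that isometries of a tree are automorphisms unless $T=\R$, in which case the valence is bounded trivially.
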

\begin{proof}
  We can assume that $G$ acts by automorphisms as otherwise $T=\R$.
There will be a loxodromic element so the action of $G$ on $T_G$ is cobounded  
by Proposition \ref{mint}. But then there are only finitely many orbits
of vertices, each of finite valence, so $T$ has bounded valence.
\end{proof}

We finish this subsection by giving a counterexample to this if the group
(necessarily infinitely generated) does not have coarse connected orbits.
We will find a group $G$ with a type (2) action on a locally finite tree,
but where every action of $G$ on any bounded valence tree has a global
fixed point. This action will even
be metrically proper and it is a variation on Section 5 Example 2.\\
\hfill\\
{\bf Example 4}: Consider the restricted direct product
$G=C_2\times C_3\times C_5\times C_7\times \ldots$. By regarding $G$ as a
direct union of the increasing sequence of subgroups
$C_2,C_2,\times C_3,C_2\times C_3\times C_5,\ldots $ we can use the coset
construction as before to obtain a type (2) action of $G$ on a locally
finite tree but with unbounded valence. As all stabilisers are finite,
this is a proper action.

Now suppose $T$ is any tree with
valence bounded by $N$ and consider an element $g$ acting on $T$ with
$g$ having order coprime to the numbers 
$2,3,4,\ldots ,N$. Then $g$ must fix a vertex $v_0$ as
$T$ is a tree but it must then also fix the vertices adjacent to $v_0$
by Orbit - Stabiliser and so on, thus $g$ acts as the identity. This means
that if $p^+$ is the smallest prime which is greater than $N$
and $p^-$ the largest prime less than $N$
then any element
in the subgroup $C_{p^+}\times\ldots$ acts trivially on $T$. Thus the quotient
of $G$ by this subgroup which is in the kernel of the action leaves us
only the finite group $H=C_2\times C_3\times\ldots \times C_{p^-}$
and so any action of $G$ on such a $T$ will have bounded orbits and hence
a global fixed point.

\subsection{Metrically proper actions}
Throughout this paper we have been considering quasi-actions of arbitrary
groups with very weak conditions on the quasi-action, so this level of
generality does not allow us to identify the group from the quasi-action.
For instance in our earlier results, whenever a group $Q$ has a quasi-action
$\alpha$ satisfying our conditions and $Q=G/N$ is a quotient of some other
group $G$ then setting $\alpha'(g,x)=\alpha(gN,x)$ results in the quasi-action
$\alpha'$ of $G$ also satisfying these conditions.

  However we will end by showing that if in addition the quasi-action is
  metrically proper then this is enough. Moreover no finiteness condition
  is needed here on the space.
\begin{thm} \label{vfmn}
  Suppose that an arbitrary group $G$ has a metrically proper
quasi-action on some metric space where a quasi-orbit quasi-isometrically
  embeds in some simplicial tree.\\ 
  $\bullet$ If the quasi-orbits are not coarse connected then $G$ is
  not finitely generated.\\
  $\bullet$ If the quasi-orbits are coarse connected then $G$ is a
  finitely generated, virtually free group.
\end{thm}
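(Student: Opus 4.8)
The plan is to settle the first bullet immediately from the earlier finite‑generation criterion, and then to reduce the second bullet to the trichotomy of Theorem \ref{main}, using metric properness twice: once to supply the single hypothesis of that theorem that is not already assumed, and once at the very end to force finiteness of point stabilisers. For the first bullet I would simply invoke Proposition \ref{fgn}(ii): for a metrically proper quasi-action the quasi-orbits are coarse connected if and only if $G$ is finitely generated, so if a quasi-orbit fails to be coarse connected then $G$ cannot be finitely generated. The same equivalence shows that in the second bullet $G$ is automatically finitely generated, so the whole content there is to prove virtual freeness.

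The key observation driving the second bullet is that metric properness makes every quasi-orbit $Q={\mathcal Q}(x_0)$ a proper metric space. Fixing $x_0$ and using axiom (3) of the quasi-action together with the lower quasi-isometry bound for the map $A_h$, one gets $d_X(\alpha(g,x_0),\alpha(h,x_0))\geq (1/K)\,d_X(\alpha(h^{-1}g,x_0),x_0)-\epsilon-C$, so that distance inside $Q$ is, up to the usual constants, a left-invariant function of $h^{-1}g$. Hence for $y=\alpha(h,x_0)$ the ball $B(y,R)\cap Q$ is the image under $g\mapsto\alpha(g,x_0)$ of the set $\{g:d_X(\alpha(h^{-1}g,x_0),x_0)\leq K(R+\epsilon+C)\}$, which is finite by Definition \ref{dfmp}. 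Thus closed balls in $Q$ are finite, $Q$ is proper, and in particular $Q$ quasi-isometrically embeds in a proper metric space (namely itself). Combined with the standing hypothesis that $Q$ quasi-isometrically embeds in a tree and with coarse connectedness, all three hypotheses of Theorem \ref{main} now hold; moreover reduction preserves metric properness by Proposition \ref{qpres}(3).

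I would then run the trichotomy of Theorem \ref{main}. If $\alpha$ reduces to the trivial action on a point, the quasi-orbits are bounded, so metric properness makes $G$ finite and hence (vacuously) virtually free. If $\alpha$ reduces to a cobounded isometric action on a bounded valence bushy tree $T'$, that reduced action is again metrically proper; taking $R=0$ in Definition \ref{dfmp} shows every vertex stabiliser is finite, while bounded valence together with coboundedness makes the quotient graph $G\backslash T'$ finite. After subdividing to remove any inversions, $G$ acts on a tree with finite stabilisers and finite quotient, so it is the fundamental group of a finite graph of finite groups and is therefore virtually free by Bass--Serre theory. This is exactly the step realising the advertised point that virtual freeness is \emph{derived} here, rather than the quasi-isometric classification of trees being assumed.

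The one output of Theorem \ref{main} not covered by Theorem \ref{mswt}, and hence the main obstacle, is the case where $\alpha$ reduces to a cobounded quasi-action on $\R$, since the line is not bushy. Here I would argue directly that $G$ is virtually infinite cyclic. The cobounded, metrically proper quasi-action on the proper quasi-geodesic space $Q$ (now quasi-isometric to $\R$) satisfies the hypotheses of the Milnor--\v{S}varc argument: coboundedness provides orbit points at bounded spacing along a coarse geodesic, and the left-invariance estimate above together with metric properness shows that boundedly close orbit points differ by one of only finitely many group elements; hence the orbit map $g\mapsto\alpha(g,x_0)$ is a quasi-isometry and $G$ is quasi-isometric to $\R$. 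A finitely generated group quasi-isometric to $\R$ has two ends, so it is virtually $\Z$ and a fortiori virtually free. (Alternatively one can avoid even this by passing through the Busemann description of Section 9 and noting that metric properness forces the associated homogeneous quasi-morphism to be proper, which again pins $G$ down as virtually $\Z$.) This exhausts the three cases and completes the proof.
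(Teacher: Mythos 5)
Your proposal is correct and follows essentially the same route as the paper: Proposition \ref{fgn}(ii) for the finite-generation dichotomy, the observation that metric properness makes quasi-orbits proper so that Theorem \ref{main} applies, and then the three cases handled by finiteness of $G$, by Bass--Serre theory for the bushy tree, and by a two-ends argument for the line. The only (cosmetic) difference is that in the $\R$ case the paper first converts the cobounded quasi-action into an isometric action on a geodesic space via Theorem \ref{gnact}(ii) before invoking \v{S}varc--Milnor and Hopf, whereas you run the \v{S}varc--Milnor argument directly on the quasi-action.
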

\begin{proof}
  The division into the finitely and infinitely generated cases
  is Proposition \ref{fgn} (ii). Assuming therefore that 
  quasi-orbits are coarse connected, the quasi-orbits quasi-isometrically
  embed into trees by the hypothesis and they are also proper metric spaces,
  because the metrically proper condition implies that closed balls in
  any such quasi-orbit are finite sets.
  Therefore the conditions for Theorem \ref{main} apply. Consequently
  we obtain a reduction of $\alpha$ to a (quasi-)action $\beta$ say,
  whereupon $\beta$ is still metrically proper. 

  Running through the conclusions in turn, if $\beta$ is the trivial action
  on a point but also metrically proper then $G$ must be finite.

  If $\beta$ is a cobounded quasi-action on $\R$, so that quasi-orbits
  are quasi-isometric to $\R$ then by Theorem \ref{gnact} (ii) we can change
  this into an equivalent isometric action on a geodesic metric space $Z$
  that is quasi-isometric to $\R$. This action of $G$ will be both
  cobounded and metrically proper, so by Svarc - Milnor we have that the
  group $G$ is quasi-isometric to $Z$ and hence to $\R$. Thus $G$ has
  two ends which means that it is virtually cyclic (this result goes back
  to Hopf in \cite{hpf} from 1944).

  Otherwise $G$ acts coboundedly by isometries on a bounded valence
  bushy tree and hence also by automorphisms. This action is
  equivalent to $\alpha$ and so is metrically proper. Thus all stabilisers
  are finite. Coboundedness and bounded valence mean that the quotient must
  be a finite graph, whereupon \cite{ser} II.2.6 Proposition 11 states
  that the fundamental group of a finite graph of groups with all vertex
  groups finite is virtually a free group.
\end{proof}
Note that the above has the following consequence.
\begin{co}
  If $G$ is a finitely generated group which quasi-isometrically embeds in
  some simplicial tree $T$ then $G$ is virtually free.
\end{co}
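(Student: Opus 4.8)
The plan is to apply Theorem \ref{vfmn} to the left regular action of $G$ on itself. First I would fix a finite generating set $S$ of $G$ (which exists since $G$ is finitely generated) and equip the underlying set $G$ with the associated word metric $d_S$. The left multiplication map $\alpha(g,x)=gx$ is then a genuine isometric action of $G$ on $(G,d_S)$, since $d_S$ is left invariant, and in particular a $(1,0,0)$-quasi-action in the sense of the paper. The whole idea is that the hypothesis ``$G$ quasi-isometrically embeds into a tree'' is a statement about the metric space $(G,d_S)$, and this metric space is exactly a quasi-orbit of this action.

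Next I would verify the three hypotheses required by Theorem \ref{vfmn}, all of which are immediate. For metric properness, taking the basepoint to be the identity $e$, the set $\{g\in G\,:\,d_S(\alpha(g,e),e)\leq R\}$ equals the ball $\{g\in G\,:\,d_S(g,e)\leq R\}$, which is finite for every $R\geq 0$ because $S$ is finite. The quasi-orbit of $e$ is ${\mathcal Q}(e)=\{\alpha(g,e)\,:\,g\in G\}=G$, namely the entire space, so the hypothesis that $G$ quasi-isometrically embeds into the simplicial tree $T$ says precisely that this quasi-orbit quasi-isometrically embeds into a simplicial tree. Finally, since $G$ is finitely generated, Proposition \ref{fgn}(i) guarantees that the quasi-orbits are coarse connected (alternatively, multiplication by the generators in $S$ exhibits $G$ as $1$-coarse connected directly).

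With these checks in place, the second bullet of Theorem \ref{vfmn} applies and yields that $G$ is (finitely generated and) virtually free, as desired. I expect no genuine obstacle in this argument: the only thing to be careful about is correctly identifying the left regular action and observing that its single quasi-orbit is all of $G$, so that the quasi-isometric embedding hypothesis transfers verbatim. All the mathematical substance --- the reduction to a cobounded isometric action on a bounded valence bushy tree (or to $\R$ or to a point), and the resulting finite graph of finite groups --- is carried entirely by Theorem \ref{vfmn}, which is exactly the point emphasised in the introduction: here virtual freeness is an output of the machinery rather than an input into the proof.
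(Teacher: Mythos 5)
Your proof is correct and rests on the same idea as the paper's: apply Theorem \ref{vfmn} to the left regular action of $G$. The one (minor but genuine) difference is that you keep the action on $(G,d_S)$ itself, so the quasi-orbit is literally the metric space $G$ and the hypothesis ``$G$ quasi-isometrically embeds in $T$'' transfers verbatim to the hypothesis of Theorem \ref{vfmn}; the paper instead first treats the case where $G$ is quasi-isometric to $T$ by transporting the action to a quasi-action on $T$ via Example \ref{qunew}, and then handles the general embedding case by observing that the image of $G$ in $T$ is coarse connected and invoking Theorem \ref{sstt} to conclude $G$ is quasi-isometric to some tree. Your route avoids that two-case split and the appeal to Theorem \ref{sstt} entirely, which is a small simplification; both arguments carry all the real content in Theorem \ref{vfmn}.
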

\begin{proof}
First assume that $G$ is quasi-isometric to $T$. As $G$ acts on itself by
left multiplication, it quasi-acts on $T$ by Example \ref{qunew}. This
quasi-action is also metrically proper and cobounded, thus quasi-orbits
are quasi-isometric to $T$ (and certainly coarse connected) so Theorem
\ref{vfmn} applies.

If $G$ quasi-isometrically embeds in $T$ then $G$ is certainly coarse
connected (with the word metric/Cayley graph from a finite generating
set), thus its image under this embedding is coarse connected and hence
is also quasi-isometric to some tree by Theorem \ref{sstt}, so $G$ is too.
\end{proof}
This result is well known but would normally be proven using the Stallings
Theorem and Dunwoody's Accessibility Theorem (see \cite{drkp} Chapter 20
for a thorough treatment).
Our argument is not completely independent of these approaches because
the proof of Mosher, Sageev and Whyte's Theorem 1 in \ref{mswt} utilises
Dunwoody tracks. Nevertheless we have not relied on either of these
big results to conclude that our group is virtually free. We have relied
on Hopf's result in \cite{hpf} to cover the two ended case, but this seems
perfectly reasonable given its vintage.

\Address

\end{document}